\definecolor{dark-red}{rgb}{0.7,0.25,0.25}
\definecolor{dark-blue}{rgb}{0.15,0.15,0.55}
\definecolor{medium-blue}{rgb}{0,0,0.65}
\definecolor{DarkGreen}{RGB}{0,150,0}
\newcommand{\googlebooks}[1]{(preview at \href{https://books.google.com/books?id=#1}{google books})}
\newcommand{\numdam}[1]{}
\theoremstyle{plain}
\newtheorem{theorem}{Theorem}[section]
\newtheorem{thm-nono}{Theorem}
\newtheorem{proposition}[theorem]{Proposition}
\newtheorem{lemma}[theorem]{Lemma}
\theoremstyle{definition}
\newtheorem{definition}[theorem]{Definition}
\newtheorem{remark}[theorem]{Remark}
\newtheorem{example}[theorem]{Example}
\newcommand{\cat}[1]{\mathchoice
  {\ensuremath{\mbox{\bfseries {\upshape {#1}}}}}% \displaystyle
  {\ensuremath{\mbox{\bfseries {\upshape {#1}}}}}% \textstyle
  {\scalebox{.7}{\ensuremath{\mbox{\bfseries {\upshape {#1}}}}}}% \scriptstyle
  {\scalebox{.5}{\ensuremath{\mbox{\bfseries {\upshape {#1}}}}}}%
  }
\def\mf{\mathfrak}
\newcommand{\foam}[1][]{#1\cat{Foam}}
\newcommand{\Cob}{\cat{Cob}}
\newcommand{\slnn}[1]{\mf{sl}_{#1}}
\newcommand{\glnn}[1]{\mf{gl}_{#1}}
\def\N{{\mathbbm N}}
\def\R{{\mathbbm R}}
\def\Z{{\mathbbm Z}}
\newcommand{\bracor}[1]{\left\llbracket #1 \right\rrbracket_\mathrm{or}}
\newcommand{\CKhor}{\mathrm{CKh}_\mathrm{or}}
\newcommand{\Hom}{\operatorname{Hom}}
\newcommand{\HOM}{\operatorname{HOM}}
\newcommand{\END}{\operatorname{END}}
\newcommand{\Kh}{\operatorname{Kh}}
\newcommand{\Khor}{\operatorname{Kh}_\mathrm{or}}
\newcommand{\oone}{\mathbf{1}}
\newcommand{\ox}{\mathbf{X}}
\tikzstyle directed=[postaction={decorate,decoration={markings,
    mark=at position #1 with {\arrow{>}}}}]
\tikzstyle rdirected=[postaction={decorate,decoration={markings,
    mark=at position #1 with {\arrow{<}}}}]
\tikzset{anchorbase/.style={baseline={([yshift=-0.5ex]current bounding box.center)}},
  tinynodes/.style={font=\tiny,text height=0.75ex,text depth=0.15ex},
  smallnodes/.style={font=\scriptsize,text height=0.75ex,text depth=0.15ex},
   >={Latex[length=1mm, width=1.5mm]}
}
\tikzset{frontline/.style={preaction={draw=white,-,line width=6pt}},}  %%% for 3d commutative diagrams
\newcommand{\ru}{to [out=0,in=270]}
\newcommand{\rd}{to [out=0,in=90]}
\newcommand{\ur}{to [out=90,in=180]}
\newcommand{\ul}{to [out=90,in=0]}
\newcommand{\lu}{to [out=180,in=270]}
\newcommand{\dl}{to [out=270,in=0]}
\newcommand{\pu}{to [out=90,in=270]}
\title{$\glnn{2}$ foams and the Khovanov homotopy type}
\author{Vyacheslav Krushkal}
\address{V.K.: Department of Mathematics, University of Virginia, Charlottesville VA 22904-4137}
\email{{krushkal@virginia.edu}}
\author{Paul Wedrich}
\address{P.W.: Max Planck Institute for Mathematics,
Vivatsgasse 7, 53111 Bonn, Germany} 
\address{P.W.: Mathematical Institute, University of Bonn,
Endenicher Allee 60, 53115 Bonn, Germany
\href{http://paul.wedrich.at}{paul.wedrich.at}}
\email{p.wedrich@gmail.com}
\begin{document}

\begin{abstract} The Blanchet link homology theory is an oriented model of Khovanov homology, functorial over the integers with respect to link cobordisms.
We formulate a stable homotopy refinement of the Blanchet theory, based on a comparison of the Blanchet and Khovanov chain complexes associated to link diagrams. The construction of the stable homotopy type relies on the signed Burnside category approach of Sarkar-Scaduto-Stoffregen. 
\end{abstract}

\maketitle
%\setcounter{tocdepth}{1}
%\tableofcontents

\section{Introduction}
The Khovanov homology \cite{MR1740682} is an invariant of oriented links in $\R^3$. Given such a link $L$, the associated invariant ${\rm Kh}(L)$ is a bigraded abelian group. A link cobordism $\Sigma$ in $\R^3\times [0,1]$ induces a map on Khovanov homology which is well-defined up to a sign, see Jacobsson \cite{MR2113903}. 
The map is defined by decomposing $\Sigma$ into elementary cobordisms, corresponding to Morse surgeries and Reidemeister moves on link diagrams; decompositions of isotopic link cobordisms are related by the {\em movie moves} of Carter--Saito~\cite{MR1238875}, and some of them indeed change the sign of the induced map \cite[Lemma 5.2]{MR2113903}.

Several models have been formulated (Blanchet \cite{MR2647055}, Caprau \cite{MR2443094}, Clark-Morrison-Walker \cite{MR2496052}, Sano \cite{2008.02131}, Vogel \cite{MR4096813}) to fix the sign ambiguity for cobordism maps in Khovanov homology, while producing isomorphic link homology groups. The resulting full functoriality is important in particular for the $4$-dimensional aspect of the Khovanov theory \cite{2019arXiv190712194M} and topological applications.

An entirely different refinement of Khovanov homology is due to Lipshitz and Sarkar. In \cite{MR3230817} they associate a suspension spectrum ${\mathcal X}(L)$ to each link diagram $L$, whose stable  homotopy type is an invariant of links in $\R^3$ and whose cohomology is the Khovanov homology of $L$.  It is shown in \cite{MR3189434} that a link cobordism $\Sigma: L_0\longrightarrow L_1$ in $\R^3\times [0,1]$ represented 
as a sequence of Reidemeister moves and elementary Morse cobordisms gives rise to a map of spectra ${\mathcal X}(L_1)\longrightarrow {\mathcal X}(L_0)$, whose induced map on cohomology is the Khovanov map $\Kh(\Sigma)\colon \Kh(L_0)\longrightarrow {\rm Kh}(L_1)$. The map of spectra associated to link cobordisms is conjectured \cite{MR3189434} to be well-defined, up to an overall sign -- like the map on Khovanov homology. 

In this paper we combine the two theories by constructing a stable homotopy refinement ${\mathcal X}_{\rm or}(L)$ of the Blanchet theory \cite{MR2647055}.

\begin{theorem} \label{main theorem:stable homotopy type} 
Let $L$ be an oriented link diagram. The stable homotopy type ${\mathcal X}_{\rm or}(L)$  is an invariant of the isotopy class of the link. Its cohomology is isomorphic to Blanchet's oriented model of Khovanov homology of $L$.
\end{theorem}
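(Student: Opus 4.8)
The plan is to proceed in two stages: first construct ${\mathcal X}_{\rm or}(L)$ from a link diagram via the signed Burnside functor framework of Sarkar--Scaduto--Stoffregen, and then prove invariance under Reidemeister moves while identifying its cohomology with the Blanchet homology. The starting point must be a careful comparison, already announced in the abstract, between the Blanchet chain complex $\CKhor(L)$ and the Khovanov chain complex $\mathrm{CKh}(L)$ of a diagram. The key structural input is that the Blanchet complex, which is built from $\glnn{2}$ foams, decomposes after a suitable change of basis so that its underlying modules are free with a preferred basis indexed by the same cube of resolutions $\{0,1\}^n$ (with the Blanchet ``oriented'' enhancement recording local orientation data at each circle), and the differentials are signed sums of elementary saddle maps — precisely the shape of data that the Sarkar--Scaduto--Stoffregen machine turns into a stable homotopy type. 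Concretely, I would define a signed Burnside functor $F_{\rm or}\colon \underline{2}^n \to \mathscr{B}_{\sigma}$ whose value at each vertex $v$ of the cube is the set of basis elements of the Blanchet module at $v$, whose morphisms are the correspondences induced by the foam cobordism maps, and whose signs are read off from the Blanchet differential; the totalization (realization) of $F_{\rm or}$ is then ${\mathcal X}_{\rm or}(L)$, and its reduced cohomology computes $\CKhor(L)$ by the general properties of Burnside-category realizations.

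**Next I would** verify the hypotheses that make this construction legitimate. This means checking that $F_{\rm or}$ is a strictly unitary lax $2$-functor into the signed Burnside category: the ``box maps'' for each $2$-face of the cube must commute up to the prescribed sign, and the higher coherences (there are none beyond codimension $2$ faces in this framework, since $\underline{2}^n$-indexed Burnside functors only require $2$-dimensional data) must hold. The sign assignment on faces should be pulled back from the Blanchet complex's own signs, which are themselves fixed by Blanchet's foam relations; the content is that these signs are \emph{consistent} in the sense required by Sarkar--Scaduto--Stoffregen, i.e.\ that the associated sign cochain on the cube is the standard one making the total complex a genuine chain complex. Because the Blanchet differential squares to zero by construction, this consistency is essentially automatic, but one has to be careful that the \emph{correspondences} (not just their induced maps on free modules) compose correctly up to sign — this is where the ``neck-cutting''-type relations among $\glnn{2}$ foams enter, and where a direct diagrammatic computation on each $2$-face is unavoidable.

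**The main obstacle** will be the proof of invariance under Reidemeister moves at the level of stable homotopy types rather than chain complexes. For each Reidemeister move I must produce a stable equivalence between the realizations of the ``before'' and ``after'' Burnside functors, and the standard strategy (as in Lipshitz--Sarkar, adapted by Sarkar--Scaduto--Stoffregen to the signed setting) is to exhibit an acyclic subcomplex/quotient-complex splitting at the Burnside-category level, using a ``Gaussian elimination'' for Burnside functors. The subtlety specific to the Blanchet model is that the local pictures for R1, R2, R3 involve the oriented foam enhancement, so the cancellation must be performed compatibly with orientations; in particular, the R3 move is the delicate one, as usual, and I expect to need an explicit simple-homotopy-equivalence of Burnside functors supported near the triple point, checking that the requisite faces carry the correct signs so that the Sarkar--Scaduto--Stoffregen realization functor sends it to a stable equivalence. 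Once invariance is established, the cohomology statement follows immediately: the realization's reduced cohomology is $\CKhor(L)$ by construction, and this is Blanchet's oriented model of Khovanov homology by the comparison of complexes set up in the first stage. A final remark: one should note that ${\mathcal X}_{\rm or}(L)$ is, up to stable equivalence, independent of the auxiliary choices (ordering of crossings, the basis identification) by the same uniqueness arguments, and is expected — though this lies beyond the present theorem — to be stably equivalent to the Lipshitz--Sarkar spectrum ${\mathcal X}(L)$.
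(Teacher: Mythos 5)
You correctly identify the overall strategy --- construct a signed Burnside functor $F_{\rm or}\colon \underline{2}^n \to \mathscr{B}_\sigma$ whose value at each vertex is a preferred basis of the Blanchet web space, realize it using the Sarkar--Scaduto--Stoffregen machinery, and prove Reidemeister invariance via Gaussian elimination of Burnside functors --- and this matches the paper's approach. However, there are two genuine gaps.

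The most serious is the claim that coherence of the $2$-morphisms is ``essentially automatic'' because $d^2 = 0$ in the Blanchet complex. This is false, and failure to see why is the crux of the matter. The key technical obstruction --- and the reason the stable homotopy type carries strictly more information than the chain complex --- is the \emph{ladybug configuration}. For a $2$-face of the cube whose underlying $1$-labelled curve picture is a ladybug (a circle with two surgery arcs linked on it, so that either surgery splits into two circles and the composite re-merges), the $2$-morphism $F_{u,v,v',w}$ in Lemma~\ref{lem:functor} is \emph{not} determined by compatibility with the chain-level maps: there are two bijections (``right pair'' and ``left pair'') between the two $2$-element correspondences, and one must make a globally consistent choice and then verify the hexagon relation (condition~(\ref{item 2}) of Lemma~\ref{lem:functor}) on every $3$-dimensional subcube. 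Your remark that $\underline{2}^n$-indexed Burnside functors ``only require $2$-dimensional data'' misreads the framework: while the data lives on faces of dimension $\leq 2$, the coherence condition involves $3$-dimensional subcubes, and for ladybug faces the choice of $2$-morphism is not canonical. This analysis is the content of Section~\ref{ladybug sec} and cannot be skipped.

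Second, you leave the construction of the preferred basis vague. Producing a basis of $\HOM_{\foam}(\emptyset, W)$ for each resolution web $W$, and proving the crucial sign-coherence property --- that each component of the differential has matrix entries lying in \emph{either} $\{0,1\}$ \emph{or} $\{0,-1\}$, not merely in $\{0,\pm 1\}$ --- requires the combinatorics of flows on $\glnn{2}$ webs developed in Section~\ref{sec: Bases for web spaces}, culminating in Proposition~\ref{prop:zipunzip}. This stronger sign-coherence is what ensures that the two elements in a ladybug correspondence carry the same sign, and that there are no cancellations anywhere in the cube of resolutions (cf.\ Remark~\ref{coefficients cancellations}); both properties are essential for the signed Burnside category construction to go through without a substantially more involved moduli space analysis.
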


Our first motivation to pursue this result is the goal of developing a fully functorial stable homotopy invariant for links in $\R^3$ and their cobordisms. To this end, our construction fixes the sign ambiguity, like the Blanchet theory does for Khovanov homology. Building on  \cite{MR3189434}, we show (Lemma \ref{cob lemma}) that a link cobordism $L_0\longrightarrow L_1$, presented as a sequence of elementary cobordisms, gives rise to a map of spectra ${\mathcal X}_{\rm or}(L_1)\longrightarrow {\mathcal X}_{\rm or}(L_0)$. The induced map on cohomology is the map given by the Blanchet theory. Our methods do not address the conjecture that the map induced by a link cobordism $\Sigma$ is well-defined with respect to isotopies of $\Sigma$.

A strategy for proving Theorem~\ref{main theorem:stable homotopy type} should be well-known to the (hitherto possibly empty) intersection of two communities of experts. It relies on two technical tools:
\begin{itemize}
    \item a choice of a natural (though not necessarily canonical) isomorphism between the Khovanov and Blanchet theories, e.g. as elaborated by Ehrig--Stroppel--Tubbenhauer and Beliakova--Hogancamp--Putyra--Wehrli \cite{2016arXiv160108010E,2019arXiv190312194B}, and 
    \item the signed Burnside category framework of Sarkar--Scaduto--Stoffregen from \cite{MR4078823}, which accommodates such natural isomorphism,
\end{itemize}
to lift the reformulation of the Khovanov homotopy in terms of the Burnside category from Lawson--Lipshitz--Sarkar~\cite{MR4153651, MR3611723} to Blanchet's setting. The exposition in this paper follows the strategy outlined above, starting from the perspective of the Blanchet theory. In particular, we do not assume the reader to be familiar with the technical tools mentioned above.

Our emphasis on giving explicit constructions using the combinatorics of $\glnn{2}$ webs and foams is related to our second motivation, namely the problem of extending the Lipshitz--Sarkar construction to $\mathfrak{sl}_N$ (or, more accurately, $\glnn{N}$) homology theories for $N\geq 2$. The framed flow category construction of the Khovanov homotopy type in \cite{MR3230817}, as well as its reformulation in terms of the Burnside category \cite{MR4153651, MR3611723} utilize the existence of canonical generators of the Khovanov chain complex associated to a link diagram. Moreover, the components of the Khovanov differential have coefficients $ 0, 1$ with respect to these generators. As such, the differential may be combinatorially encoded using correspondences in the context of the Burnside category. 

The $\glnn{N}$ link homology theories are more intricate; in particular both the problem of identifying a basis and computing the differential are substantially more involved. The Blanchet theory, which fits in the foam category formulation\footnote{See \cite{1702.04140,MR3877770}, building on \cite{MR2491657,MR3426687,MR3545951}.} of $\glnn{N}$ link homology theories at $N=2$, is thus an interesting test case. Given an oriented link diagram, we use an additional combinatorial piece of data associated to the link diagram, a {\em flow} with values in $\{+1,-1\}$, to construct a canonical set of $\glnn{2}$ foam generators (see Section \ref{sec: Bases for web spaces}). The differential has coefficients in the set $\{ 0, \pm 1\}$ with respect to these generators. Importantly, Proposition \ref{prop:zipunzip} shows that the edge differential and the cobordism maps are {\em sign-coherent}, in the sense that for any two webs $V, W$ related by a zip/unzip or saddle foam, the associated map has coefficients in either $\{ 0, 1\}$ or $\{ 0, - 1\}$ with respect to the chosen bases. These results are then used in Section \ref{stable homotopy sec} to formulate the stable homotopy type ${\mathcal X}_{\rm or}(L)$ and to define the maps of spectra induced by elementary link cobordisms. The construction is given using the signed Burnside category framework of \cite{MR4078823}. In Section \ref{sec: Bl Khov} we show that ${\mathcal X}_{\rm or}(L)$ is homotopy equivalent to ${\mathcal X}(L)$ of \cite{MR3230817}.

Our construction of ${\mathcal X}_{\rm or}(L)$ can be seen as a step towards a construction of stable homotopy refinements of the $\glnn{N}$ link homology theories using framed flow category methods. However, there are several features that make the analysis substantially easier in the $N=2$ theory considered here, and new ideas are needed to address the $N>2$ case. See the work of Jones--Lobb-Sch\"utz \cite{MR4015262} in this direction. It would also be interesting to compare the present work with Kitchloo's constructions in \cite{2019arXiv191007443K}.   

Constructing a $\glnn{2}$ version of the tangle invariant of \cite{1706.023461} is outside the scope of this paper, but we expect that the methods considered here should be helpful for formulating it as well. 
We also hope that our results will contribute to the development of a stable homotopy refinement of the 4-manifold invariants from \cite{2019arXiv190712194M}; indeed this was a main motivation for our work.

{\bf Acknowledgements.} We would like to thank Robert Lipshitz and Sucharit Sarkar for helpful discussions related to this work.
The first named author also would like to thank Rostislav Akhmechet and Michael Willis for many conversations on the subject of stable homotopy refinement of link homology theories.

VK was supported in part by the Miller Institute for Basic Research in Science at UC Berkeley and Simons Foundation fellowship 608604. PW was
supported by the National Science Foundation under Grant No. DMS-1440140, while
in residence at the Mathematical Sciences Research Institute in Berkeley,
California, during the Spring 2020 semester. 

\section{Blanchet's oriented model for Khovanov homology} \label{Blanchet sec}
Here we recall the construction of Blanchet's oriented model of Khovanov homology using $\glnn{2}$ foams. We shall assume that the reader is familiar with Bar-Natan's description of Khovanov homology using \emph{dotted cobordisms} from \cite{MR2174270}. 

\begin{definition}
The dotted cobordism category $\Cob$ is the additive completion of the graded pre-additive category with
\begin{itemize}
    \item objects given by closed unoriented 1-dimensional manifolds properly embedded in $\R^2$ (and grading shifts thereof, encoded by powers of $q$),
    \item morphisms given by $\Z$-linear combinations of dotted cobordisms, i.e. unoriented 2-dimensional manifolds properly embedded in $\R^2\times I$, whose components may be decorated with \emph{dots}, considered up to isotopy rel boundary and the local relations
\begin{equation}\label{sl2closedfoam}
\begin{tikzpicture} [fill opacity=0.2, scale=.5,anchorbase]
	% draw outer circle
	\filldraw [fill=blue] (0,0) circle (1);
	\draw (-1,0) .. controls (-1,-.4) and (1,-.4) .. (1,0);
	\draw[dashed] (-1,0) .. controls (-1,.4) and (1,.4) .. (1,0);
\end{tikzpicture}
\; = \; 0\; , \quad
\begin{tikzpicture} [fill opacity=0.2, scale=.5,anchorbase]
	\filldraw [fill=blue] (0,0) circle (1);
	\draw (-1,0) .. controls (-1,-.4) and (1,-.4) .. (1,0);
	\draw[dashed] (-1,0) .. controls (-1,.4) and (1,.4) .. (1,0);
	\node [opacity=1]  at (0,0.6) {$\bullet$};
\end{tikzpicture}
\; = \; 1
\; , \quad
\begin{tikzpicture} [fill opacity=0.2, scale=.4,anchorbase]
	\draw [fill=blue] (0,3.5) ellipse (1 and 0.5);
	\path [fill=blue] (0,0) ellipse (1 and 0.5);
	\draw (1,0) .. controls (1,-.66) and (-1,-.66) .. (-1,0);
	\draw[dashed] (1,0) .. controls (1,.66) and (-1,.66) .. (-1,0);
	\draw (1,3.5) to (1,0);
	\draw (-1,3.5) to (-1,0);
	\path[fill=blue, opacity=.3] (-1,3.5) .. controls (-1,2.84) and (1,2.84) .. (1,3.5) to
		(1,0) .. controls (1,.66) and (-1,.66) .. (-1,0) to cycle;
\end{tikzpicture}
\; = \;
\begin{tikzpicture} [fill opacity=0.2, scale=.4,anchorbase]
	\draw [fill=blue] (0,3.5) ellipse (1 and 0.5);
	\draw (-1,3.5) .. controls (-1,1.5) and (1,1.5) .. (1,3.5);
	\path [fill=blue, opacity=.3] (1,3.5) .. controls (1,2.84) and (-1,2.84) .. (-1,3.5) to
		(-1,3.5) .. controls (-1,1.5) and (1,1.5) .. (1,3.5);
	%draw the cap
	\path [fill=blue] (0,0) ellipse (1 and 0.5);
	\draw (1,0) .. controls (1,-.66) and (-1,-.66) .. (-1,0);
	\draw[dashed] (1,0) .. controls (1,.66) and (-1,.66) .. (-1,0);
	\draw (-1,0) .. controls (-1,2) and (1,2) .. (1,0);
	\path [fill=blue, opacity=.3] (1,0) .. controls (1,.66) and (-1,.66) .. (-1,0) to
		(-1,0) .. controls (-1,2) and (1,2) .. (1,0);
		\node[opacity=1] at (0,1) {$\bullet$};
\end{tikzpicture}
\; + \;
\begin{tikzpicture} [fill opacity=0.2, scale=.4,anchorbase]
	\draw [fill=blue] (0,3.5) ellipse (1 and 0.5);
	\draw (-1,3.5) .. controls (-1,1.5) and (1,1.5) .. (1,3.5);
	\path [fill=blue, opacity=.3] (1,3.5) .. controls (1,2.84) and (-1,2.84) .. (-1,3.5) to
		(-1,3.5) .. controls (-1,1.5) and (1,1.5) .. (1,3.5);
	\node[opacity=1] at (0,2.5) {$\bullet$};
	\path [fill=blue] (0,0) ellipse (1 and 0.5);
	\draw (1,0) .. controls (1,-.66) and (-1,-.66) .. (-1,0);
	\draw[dashed] (1,0) .. controls (1,.66) and (-1,.66) .. (-1,0);
	\draw (-1,0) .. controls (-1,2) and (1,2) .. (1,0);
	\path [fill=blue, opacity=.3] (1,0) .. controls (1,.66) and (-1,.66) .. (-1,0) to
		(-1,0) .. controls (-1,2) and (1,2) .. (1,0);
\end{tikzpicture}
\;,\quad 
\begin{tikzpicture}[fill opacity=.2, scale=.8, anchorbase]
\filldraw [fill=blue] (-1,-1) rectangle (1,1);
\node [opacity=1] at (0,-.15) {$\bullet$};
\node [opacity=1] at (0,.15) {$\bullet$};
\end{tikzpicture}
\;=\; 0.
\end{equation}
\item composition is given by (the bilinear extension of) stacking cobordisms,
\item the grading requires a dotted cobordism $C\colon q^k M_1\longrightarrow q^l M_2$ to satisfy 
\[2\#\textrm{dots}-\chi(C)=l-k.\]
\end{itemize}
\end{definition}
This cobordism category\footnote{Or, rather, its extension to a certain type of 3-category termed ``canopolis'' by Bar-Natan} categorifies the Temperley--Lieb skein theory of linear combinations of isotopy classes of unoriented planar curves, modulo the relation \[\bigcirc=(q+q^{-1})\,\emptyset,\] which models the pivotal tensor category of $U_q(\slnn{2})$-modules generated by the natural 2-dimensional module. 
\smallskip

Next we describe the category of Blanchet foams as an oriented extension of $\Cob$. Blanchet foams categorify the skein theory of $\glnn{2}$ webs, which models the pivotal tensor category of $U_q(\glnn{2})$-modules generated by the natural 2-dimensional module and its exterior square, see \cite{MR3263166,MR3912946,MR3709658}. 

Here we define a $\glnn{2}$ web to be an oriented, trivalent graph properly embedded in $\R^2$, with a flow on the edges taking values in the set $\{1,2\}$. Practically, this means a web is a graph drawn in the plane, with oriented edges labelled either $1$ or $2$ (and drawn solid or doubled respectively) and at vertices two $1$-labelled edges merge into a $2$-labelled one, or a $2$-labelled edge splits into two $1$-labelled ones. (Examples of (local) webs are shown in Lemma~\ref{lem:webisos} and Lemma~\ref{lem:regionsimpl}.)

We also need the notion of (embedded) $\glnn{2}$ foams, which form the morphisms in Blanchet's category.  
Foams for $\glnn{2}$ are ``trivalent surfaces'' which we describe next, properly embedded in $\R^2\times I$, equipped with a flow taking values in $\{1,2\}$. Foams are assembled from 
compact oriented $1$- or $2$-labelled surfaces, called \textit{facets}, which are glued along (part of) their boundary so that precisely two boundary components of
$1$-labeled facets are identified with a single boundary component of a
$2$-labeled facet with the opposite orientation. Points on such \textit{seams} have a neighbourhood modelled on the letter {\it Y} times an interval, and all other points have disk neighbourhoods\footnote{Foams for $\glnn{N}$ for $N\geq 3$ require other singularities, which can be avoided here.}. Facets with label $1$ are furthermore allowed to carry dots.

Given a $\glnn{2}$ foam $F$ or web $W$, we denote by $c(F)$ the unoriented surface obtained by deleting all 2-labelled facets in $F$, and similarly $c(W)$ denotes the result of deleting all $2$-labelled edges in $W$. 

\begin{definition} \label{def:Tfoam}
The  $\glnn{2}$ foam category $\foam$  is the additive completion of the graded pre-additive category with:
\begin{itemize}
\item objects given by webs embedded in $\R^2$ (and grading shifts thereof),
\item morphisms given by $\Z$-linear combinations of $\glnn{2}$ foams
properly embedded in $\R^2\times [0,1]$, considered up to isotopy rel boundary and Blanchet's local relations \cite{MR2647055}, which include \eqref{sl2closedfoam} on 1-labelled facets as well as:

%sl2 neck-cutting for $2$-labeled
\begin{equation} \label{sl2neckcutting_enh_2lab}
\begin{tikzpicture} [fill opacity=0.3, scale=.5,anchorbase]
	\draw [fill=yellow , fill opacity=0.3] (0,0) circle (1);
 %\shade [ball color=red] (0,0) circle (1);
	%approximate perspective ellipse: \draw [dotted] (0,0) ellipse (1 and 0.25); with bezier curves
	\draw (-1,0) .. controls (-1,-.4) and (1,-.4) .. (1,0);
	\draw[dashed] (-1,0) .. controls (-1,.4) and (1,.4) .. (1,0);
\end{tikzpicture}
\;=\; -1
\;, \qquad
\begin{tikzpicture} [scale=0.4,fill opacity=0.2,  decoration={markings, mark=at position 0.6 with {\arrow{>}};  },anchorbase]
	\draw [fill=yellow , fill opacity=0.3] (0,3.5) ellipse (1 and 0.5);
	\path [fill=yellow , fill opacity=0.3] (0,0) ellipse (1 and 0.5);
	\draw (1,0) .. controls (1,-.66) and (-1,-.66) .. (-1,0);
	\draw[dashed] (1,0) .. controls (1,.66) and (-1,.66) .. (-1,0);
	\draw (1,3.5) -- (1,0);
	\draw (-1,3.5) -- (-1,0);
	% shade the rest...we have to approximate the parts of the ellipses with bezier curves...oh well
	\path[fill=yellow, opacity=.3] (-1,3.5) .. controls (-1,2.84) and (1,2.84) .. (1,3.5) --
		(1,0) .. controls (1,.66) and (-1,.66) .. (-1,0) -- cycle;
\end{tikzpicture}
\; = \; - \;
\begin{tikzpicture} [scale=0.4,fill opacity=0.2,  decoration={markings, mark=at position 0.6 with {\arrow{>}};  },anchorbase]
	\draw [fill=yellow , fill opacity=0.3] (0,3.5) ellipse (1 and 0.5);
	\draw (-1,3.5) .. controls (-1,1.5) and (1,1.5) .. (1,3.5);
	\path [fill=yellow, opacity=.3] (1,3.5) .. controls (1,2.84) and (-1,2.84) .. (-1,3.5) --
		(-1,3.5) .. controls (-1,1.5) and (1,1.5) .. (1,3.5);
	\path [fill=yellow , fill opacity=0.3] (0,0) ellipse (1 and 0.5);
	\draw (1,0) .. controls (1,-.66) and (-1,-.66) .. (-1,0);
	\draw[dashed] (1,0) .. controls (1,.66) and (-1,.66) .. (-1,0);
	\draw (-1,0) .. controls (-1,2) and (1,2) .. (1,0);
	\path [fill=yellow, opacity=.3] (1,0) .. controls (1,.66) and (-1,.66) .. (-1,0) --
		(-1,0) .. controls (-1,2) and (1,2) .. (1,0);
\end{tikzpicture}
\;,\qquad
\begin{tikzpicture} [fill opacity=0.2,decoration={markings, mark=at position 0.6 with {\arrow{>}}; }, scale=.65,anchorbase]
	% draw outer circle
	\filldraw [fill=blue] (0,0) circle (1);
	% singular seams approximate boundary ellipse with bezier curves
	\path [fill=yellow , fill opacity=0.3] (0,0) ellipse (1 and 0.3); 	
	\draw [very thick, blue, postaction={decorate}] (-1,0) .. controls (-1,-.4) and (1,-.4) .. (1,0);
	%to switch arrow direction use (-1,0) .. controls (-1,-.4) and (1,-.4) .. (1,0); above
	\draw[very thick, blue, dashed] (-1,0) .. controls (-1,.4) and (1,.4) .. (1,0);
	%draw dot
	\node [opacity=1]  at (0,0.7) {$\bullet$};
	\node [opacity=1]  at (0,-0.7) {$\bullet$};
	%draw the number of dots
	\node [opacity=1] at (-0.3,0.7) {\tiny $\alpha$};
	\node [opacity=1] at (-0.3,-0.7) {\tiny $\beta$};
\end{tikzpicture}
\quad = \delta_{\alpha,1}\delta_{\beta,0} - \delta_{\alpha,0}\delta_{\beta,1}
\end{equation}

% Figure 5 Blanchet

\begin{equation} \label{sl2Fig5Blanchet_1}
%\begin{equation} \label{sl2Fig5Blanchet_2}
\begin{tikzpicture} [scale=.5,fill opacity=0.2,decoration={markings, mark=at position 0.5 with {\arrow{>}}; },anchorbase]
	\draw[very thick, postaction={decorate}] (2,0) -- (.75,0);
	\draw[very thick, postaction={decorate}] (-.75,0) -- (-2,0);
	\draw[very thick, postaction={decorate}] (-.75,0) .. controls (-.5,-.5) and (.5,-.5) .. (.75,0);
	\draw[dashed, double, postaction={decorate}] (.75,0) .. controls (.5,.5) and (-.5,.5) .. (-.75,0);
	% draw and color the vertical sheets
	\draw (-2,0) -- (-2,3.5);
	\draw (2,0) -- (2,3.5);
	\path [fill=blue] (-2,3.5) -- (-.75,3.5) -- (-.75,0) -- (-2,0) -- cycle;
	\path [fill=blue] (2,3.5) -- (.75,3.5) -- (.75,0) -- (2,0) -- cycle;
	% color the tube sheets
	\path [fill=yellow , fill opacity=0.3] (-.75,3.5) .. controls (-.5,4) and (.5,4) .. (.75,3.5) --
		(.75,0) .. controls (.5,.5) and (-.5,.5) .. (-.75,0);
	\path [fill=blue] (-.75,3.5) .. controls (-.5,3) and (.5,3) .. (.75,3.5) --
		(.75,0) .. controls (.5,-.5) and (-.5,-.5) .. (-.75,0);
	\draw [very thick, blue, postaction={decorate}] (-.75,0) -- (-.75,3.5);
	% to reverse arrows use (-.75,4) -- (-.75,0); above
	\draw [very thick, blue, postaction={decorate}] (.75, 3.5) -- (.75,0);
	%to reverse arrow use (.75, 0) -- (.75,4); above
	% draw the top web
	\draw[very thick, postaction={decorate}] (2,3.5) -- (.75,3.5);
	\draw[very thick, postaction={decorate}] (-.75,3.5) -- (-2,3.5);
	\draw[very thick, postaction={decorate}] (-.75,3.5) .. controls (-.5,3) and (.5,3) .. (.75,3.5);
	\draw[double, postaction={decorate}] (.75,3.5) .. controls (.5,4) and (-.5,4) .. (-.75,3.5);
\end{tikzpicture}
\; = \;
\begin{tikzpicture} [scale=.5,fill opacity=0.2,decoration={markings, mark=at position 0.5 with {\arrow{>}}; },anchorbase]
	\draw[very thick, postaction={decorate}] (2,0) -- (.75,0);
	\draw[very thick, postaction={decorate}] (-.75,0) -- (-2,0);
	\draw[very thick, postaction={decorate}] (-.75,0) .. controls (-.5,-.5) and (.5,-.5) .. (.75,0);
	\draw[dashed, double, postaction={decorate}] (.75,0) .. controls (.5,.5) and (-.5,.5) .. (-.75,0);
	\draw (-2,0) -- (-2,3.5);
	\draw (2,0) -- (2,3.5);
	\path [fill=blue] (-2,3.5) -- (-.75,3.5) -- (-.75,0) -- (-2,0) -- cycle;
	\path [fill=blue] (2,3.5) -- (.75,3.5) -- (.75,0) -- (2,0) -- cycle;
	\path [fill=blue] (-.75,3.5) .. controls (-.75,1.5) and (.75,1.5) .. (.75,3.5) --
			(.75, 0) .. controls (.75,2) and (-.75,2) .. (-.75,0);
	% blue caps and cups
	\path [fill=yellow , fill opacity=0.3] (-.75,3.5) .. controls (-.5,4) and (.5,4) .. (.75,3.5) --
			(.75,3.5) .. controls (.75,1.5) and (-.75,1.5) .. (-.75,3.5);
	\path [fill=blue] (-.75,3.5) .. controls (-.5,3) and (.5,3) .. (.75,3.5) --
			(.75,3.5) .. controls (.75,1.5) and (-.75,1.5) .. (-.75,3.5);
	\path [fill=blue] (-.75, 0) .. controls (-.75,2) and (.75,2) .. (.75,0) --
			(.75,0) .. controls (.5,-.5) and (-.5,-.5) .. (-.75,0);
	\path [fill=yellow , fill opacity=0.3] (-.75, 0) .. controls (-.75,2) and (.75,2) .. (.75,0) --
			(.75,0) .. controls (.5,.5) and (-.5,.5) .. (-.75,0);
	%draw the seams
	\draw [very thick, blue, postaction={decorate}] (.75,3.5) .. controls (.75,1.5) and (-.75,1.5) .. (-.75,3.5);
	% to reverse use (-.75,4) .. controls (-.75,2) and (.75,2) .. (.75,4); above
	\draw [very thick, blue, postaction={decorate}] (-.75, 0) .. controls (-.75,2) and (.75,2) .. (.75,0);
	\draw[very thick, postaction={decorate}] (2,3.5) -- (.75,3.5);
	\draw[very thick, postaction={decorate}] (-.75,3.5) -- (-2,3.5);
	\draw[very thick, postaction={decorate}] (-.75,3.5) .. controls (-.5,3) and (.5,3) .. (.75,3.5);
	\draw[double, postaction={decorate}] (.75,3.5) .. controls (.5,4) and (-.5,4) .. (-.75,3.5);
\end{tikzpicture}
\;, \quad
\begin{tikzpicture} [scale=.5,fill opacity=0.2,decoration={markings, mark=at position 0.5 with {\arrow{>}}; },anchorbase]
	% draw the bottom web
	\draw[double, postaction={decorate}] (.75,0) -- (2,0);
	\draw[double, postaction={decorate}] (-2,0) -- (-.75,0);
	\draw[very thick, postaction={decorate}] (-.75,0) .. controls (-.5,-.5) and (.5,-.5) .. (.75,0);
	\draw[very thick, dashed, postaction={decorate}] (-.75,0) .. controls (-.5,.5) and (.5,.5) .. (.75,0);
	\draw (-2,0) -- (-2,3.5);
	\draw (2,0) -- (2,3.5);
	\path [fill=yellow, opacity=.3] (-2,3.5) -- (-.75,3.5) -- (-.75,0) -- (-2,0) -- cycle;
	\path [fill=yellow, opacity=.3] (2,3.5) -- (.75,3.5) -- (.75,0) -- (2,0) -- cycle;
	% color the tube sheets
	\path [fill=blue] (-.75,3.5) .. controls (-.5,4) and (.5,4) .. (.75,3.5) --
		(.75,0) .. controls (.5,.5) and (-.5,.5) .. (-.75,0);
	\path [fill=blue] (-.75,3.5) .. controls (-.5,3) and (.5,3) .. (.75,3.5) --
		(.75,0) .. controls (.5,-.5) and (-.5,-.5) .. (-.75,0);
	\draw [very thick, blue, postaction={decorate}] (-.75,0) -- (-.75,3.5);
	% to reverse arrows use (-.75,4) -- (-.75,0); above
	\draw [very thick, blue, postaction={decorate}] (.75, 3.5) -- (.75,0);
	%to reverse arrow use (.75, 0) -- (.75,4); above
	% draw the top web
	\draw[double, postaction={decorate}] (.75,3.5) -- (2,3.5);
	\draw[double, postaction={decorate}] (-2,3.5) -- (-.75,3.5);
	\draw[very thick, postaction={decorate}] (-.75,3.5) .. controls (-.5,3) and (.5,3) .. (.75,3.5);
	\draw[very thick, postaction={decorate}] (-.75,3.5) .. controls (-.5,4) and (.5,4) .. (.75,3.5);
\end{tikzpicture}
\; = \;
\begin{tikzpicture} [scale=.5,fill opacity=0.2,decoration={markings, mark=at position 0.5 with {\arrow{>}}; },anchorbase]
	\draw[double, postaction={decorate}] (.75,0) -- (2,0);
	\draw[double, postaction={decorate}] (-2,0) -- (-.75,0);
	\draw[very thick, postaction={decorate}] (-.75,0) .. controls (-.5,-.5) and (.5,-.5) .. (.75,0);
	\draw[very thick, dashed, postaction={decorate}] (-.75,0) .. controls (-.5,.5) and (.5,.5) .. (.75,0);
	% draw and color the vertical sheets
	\draw (-2,0) -- (-2,3.5);
	\draw (2,0) -- (2,3.5);
	\path [fill=yellow,opacity=.3] (-2,3.5) -- (-.75,3.5) -- (-.75,0) -- (-2,0) -- cycle;
	\path [fill=yellow, opacity=.3] (2,3.5) -- (.75,3.5) -- (.75,0) -- (2,0) -- cycle;
	\path [fill=yellow, opacity=.3] (-.75,3.5) .. controls (-.75,1.5) and (.75,1.5) .. (.75,3.5) --
			(.75, 0) .. controls (.75,2) and (-.75,2) .. (-.75,0);
	% blue caps and cups
	\path [fill=blue] (-.75,3.5) .. controls (-.5,4) and (.5,4) .. (.75,3.5) --
			(.75,3.5) .. controls (.75,1.5) and (-.75,1.5) .. (-.75,3.5);
	\path [fill=blue] (-.75,3.5) .. controls (-.5,3) and (.5,3) .. (.75,3.5) --
			(.75,3.5) .. controls (.75,1.5) and (-.75,1.5) .. (-.75,3.5);
	\path [fill=blue] (-.75, 0) .. controls (-.75,2) and (.75,2) .. (.75,0) --
			(.75,0) .. controls (.5,-.5) and (-.5,-.5) .. (-.75,0);
	\path [fill=blue] (-.75, 0) .. controls (-.75,2) and (.75,2) .. (.75,0) --
			(.75,0) .. controls (.5,.5) and (-.5,.5) .. (-.75,0);
	\draw [very thick, blue, postaction={decorate}] (.75,3.5) .. controls (.75,1.5) and (-.75,1.5) .. (-.75,3.5);
	\draw [very thick, blue, postaction={decorate}] (-.75, 0) .. controls (-.75,2) and (.75,2) .. (.75,0);
	% to reverse use (.75, 0) .. controls (.75,2) and (-.75,2) .. (-.75,0); above
	\draw[double, postaction={decorate}] (.75,3.5) -- (2,3.5);
	\draw[double, postaction={decorate}] (-2,3.5) -- (-.75,3.5);
	\draw[very thick, postaction={decorate}] (-.75,3.5) .. controls (-.5,3) and (.5,3) .. (.75,3.5);
	\draw[very thick, postaction={decorate}] (-.75,3.5) .. controls (-.5,4) and (.5,4) .. (.75,3.5);
	%% Dot
	\node [opacity=1] at (0,3.5) {$\bullet$};
\end{tikzpicture}
\; - \;
\begin{tikzpicture} [scale=.5,fill opacity=0.2,decoration={markings, mark=at position 0.5 with {\arrow{>}}; },anchorbase]
	\draw[double, postaction={decorate}] (.75,0) -- (2,0);
	\draw[double, postaction={decorate}] (-2,0) -- (-.75,0);
	\draw[very thick, postaction={decorate}] (-.75,0) .. controls (-.5,-.5) and (.5,-.5) .. (.75,0);
	\draw[very thick, dashed, postaction={decorate}] (-.75,0) .. controls (-.5,.5) and (.5,.5) .. (.75,0);
	% draw and color the vertical sheets
	\draw (-2,0) -- (-2,3.5);
	\draw (2,0) -- (2,3.5);
	\path [fill=yellow,opacity=.3] (-2,3.5) -- (-.75,3.5) -- (-.75,0) -- (-2,0) -- cycle;
	\path [fill=yellow, opacity=.3] (2,3.5) -- (.75,3.5) -- (.75,0) -- (2,0) -- cycle;
	\path [fill=yellow, opacity=.3] (-.75,3.5) .. controls (-.75,1.5) and (.75,1.5) .. (.75,3.5) --
			(.75, 0) .. controls (.75,2) and (-.75,2) .. (-.75,0);
	% blue caps and cups
	\path [fill=blue] (-.75,3.5) .. controls (-.5,4) and (.5,4) .. (.75,3.5) --
			(.75,3.5) .. controls (.75,1.5) and (-.75,1.5) .. (-.75,3.5);
	\path [fill=blue] (-.75,3.5) .. controls (-.5,3) and (.5,3) .. (.75,3.5) --
			(.75,3.5) .. controls (.75,1.5) and (-.75,1.5) .. (-.75,3.5);
	\path [fill=blue] (-.75, 0) .. controls (-.75,2) and (.75,2) .. (.75,0) --
			(.75,0) .. controls (.5,-.5) and (-.5,-.5) .. (-.75,0);
	\path [fill=blue] (-.75, 0) .. controls (-.75,2) and (.75,2) .. (.75,0) --
			(.75,0) .. controls (.5,.5) and (-.5,.5) .. (-.75,0);
	\draw [very thick, blue, postaction={decorate}] (.75,3.5) .. controls (.75,1.5) and (-.75,1.5) .. (-.75,3.5);
	\draw [very thick, blue, postaction={decorate}] (-.75, 0) .. controls (-.75,2) and (.75,2) .. (.75,0);
	% to reverse use (.75, 0) .. controls (.75,2) and (-.75,2) .. (-.75,0); above
	\draw[double, postaction={decorate}] (.75,3.5) -- (2,3.5);
	\draw[double, postaction={decorate}] (-2,3.5) -- (-.75,3.5);
	\draw[very thick, postaction={decorate}] (-.75,3.5) .. controls (-.5,3) and (.5,3) .. (.75,3.5);
	\draw[very thick, postaction={decorate}] (-.75,3.5) .. controls (-.5,4) and (.5,4) .. (.75,3.5);
	%% Dot
	\node [opacity=1] at (0,0) {$\bullet$};
\end{tikzpicture}
\end{equation}

Here we shade $1$-labelled facets blue and $2$-labelled facets yellow\footnote{In grayscale, blue appears darker than yellow.}. An essential feature of Blanchet's relations is that swapping all orientations introduces a minus sign in both of the relations in \eqref{sl2Fig5Blanchet_1}.

\item composition given by (the bilinear extension of) stacking foams,
\item the grading requires a foam $F\colon q^k W_1\longrightarrow q^l W_2$ to satisfy 
\[2\#\textrm{dots}-\chi(c(F))=k-l.\]
In particular, for any objects $W_1$, $W_2$, the morphism spaces $\Hom_{\foam}(W_1,W_2)$ are abelian groups. To capture morphisms of various degrees, we will also consider the graded abelian groups:
\[
\HOM_{\foam}(W_1,W_2):=\bigoplus_{k}\Hom_{\foam}(q^{-k}W_1,W_2)\]

\end{itemize}
\end{definition}

We record one important consequence of these relations, namely the dot-sliding
relation
\begin{equation}
\label{eq:dotslide}
\begin{tikzpicture} [scale=.8,fill opacity=0.2,  anchorbase, scale=.6]               
	%% Draw the red line
	\draw [very thick, blue] (0,-2) -- (0,0);
	%% Draw the bottom web
	\draw [double] (-1.5,-2) -- (0,-2);
	\draw [very thick] (0,-2) -- (1.8,-1);
	\draw [very thick] (0,-2) -- (1.5,-3);
	%% Draw the back sheet
	\filldraw [fill=blue] (0,0) -- (1.8,1) -- (1.8,-1) -- (0,-2) -- cycle;
	%% Draw the left sheet
	\path[fill=yellow, fill opacity=0.3] (-1.5,0) -- (0,0) -- (0,-2) -- (-1.5,-2) -- cycle;
	\draw (-1.5,0) -- (-1.5,-2);
	%% Draw the front sheet
	\filldraw [fill=blue] (0,0) -- (1.5,-1) -- (1.5,-3) -- (0,-2) -- cycle;
	%% Draw the top web
	\draw [double] (-1.5,0) -- (0,0);
	\draw [very thick] (0,0) -- (1.8,1);
	\draw [very thick] (0,0) -- (1.5,-1);
	%% Draw the dot
	\node [opacity=1] at (1,-0.3) {$\bullet$};
\end{tikzpicture} 
\; = \; - \;
\begin{tikzpicture} [scale=.6,fill opacity=0.2,  anchorbase, scale=.75]                                      	%% Draw the red line
	\draw [very thick, blue] (0,-2) -- (0,0);
	%% Draw the bottom web
	\draw [double] (-1.5,-2) -- (0,-2);
	\draw [very thick] (0,-2) -- (1.8,-1);
	\draw [very thick] (0,-2) -- (1.5,-3);
	%% Draw the back sheet
	\filldraw [fill=blue] (0,0) -- (1.8,1) -- (1.8,-1) -- (0,-2) -- cycle;
	%% Draw the left sheet
	\path[fill=yellow, fill opacity=0.3] (-1.5,0) -- (0,0) -- (0,-2) -- (-1.5,-2) -- cycle;
	\draw (-1.5,0) -- (-1.5,-2);
	%% Draw the front sheet
	\filldraw [fill=blue] (0,0) -- (1.5,-1) -- (1.5,-3) -- (0,-2) -- cycle;
	%% Draw the top web
	\draw [double] (-1.5,0) -- (0,0);
	\draw [very thick] (0,0) -- (1.8,1);
	\draw [very thick] (0,0) -- (1.5,-1);
% 	\draw [double, postaction={decorate}] (-1.5,0) -- (0,0);
% 	\draw [very thick, postaction={decorate}] (0,0) -- (1.8,1);
% 	\draw [very thick, postaction={decorate}] (0,0) -- (1.5,-1);
	%% Draw the dot
	\node [opacity=1] at (1.1,-1.8) {$\bullet$};
\end{tikzpicture} .
\end{equation}

The presentation of the relation \eqref{sl2closedfoam} and \eqref{sl2Fig5Blanchet_1} is imported from
Lauda--Queffelec--Rose~\cite{MR3426687}, where it is shown that  Blanchet's foam
relations also arise in certain quotients of categorified quantum groups of
type A. For a generalisation to foams for $\glnn{N}$, see 
Robert--Wagner~\cite{1702.04140}.

\begin{remark}
\label{rem:def}
The last relation in \eqref{sl2closedfoam} (and thus also \eqref{eq:dotslide}) can be deformed to yield a foam-based construction
of Lee's deformed Khovanov homology \cite{MR2173845} or an equivariant link
homology~\cite{MR2232858}. For a careful study of such deformations, see \cite{2016arXiv160108010E,2020arXiv200414197K}. For the $\glnn{N}$ case see \cite{MR3590355}.
\end{remark}

\begin{remark} After specialising the ground ring from $\Z$ to $\mathbb{F}_2$, the map which deletes $2$-labelled facets and edges induces a full, essentially surjective functor
\[c\colon \foam_{\mathbb{F}_2} \longrightarrow \Cob_{\mathbb{F}_2},\]
as can be seen by comparing Blanchet's foam relations with Bar-Natan's cobordism relations. In fact, $c$ is also faithful. For refined versions of this statement see Queffelec--Wedrich~\cite{1806.03416} and  Beliakova--Hogancamp--Putyra--Wehrli~\cite{2019arXiv190312194B}.
\end{remark}
  
  \subsection{Blanchet's oriented model} \label{Blanchet review sec}
  To crossings in oriented link diagrams, Blanchet \cite{MR2647055} associates chain complexes of (local) webs and foams:
  \begin{equation*}
  \bracor{
  \begin{tikzpicture}[anchorbase,xscale=.45,yscale=.45]
    \draw [very thick, ->] (1,0) \pu (0,1.5);
    \draw [white, line width=.15cm] (0,0) \pu (1,1.5);
    \draw [very thick, ->] (0,0) \pu (1,1.5);
  \end{tikzpicture}
  }
  \!\!:=\!\left( 0 \longrightarrow 
  \uwave{ q^{-1}
\begin{tikzpicture}[anchorbase,scale=.45]
\draw [very thick, ->] (0,0) -- (0,1.5);
\draw [very thick, ->] (1,0) -- (1,1.5);
\end{tikzpicture}
}
\longrightarrow
q^{-2}
\begin{tikzpicture}[anchorbase,xscale=-.5,yscale=.5]
\draw [very thick] (0,0) \pu (0,0.1) \ur (.5,.6);
\draw [very thick] (1,0) \pu (1,0.1) \ul (.5,.6);
\draw [double] (.5,.6) \pu (.5,.9);
\draw [very thick,->] (.5,.9) \lu (0,1.3) \pu (0,1.5);
\draw [very thick,->] (.5,.9) \ru (1,1.3) \pu (1,1.5);
\end{tikzpicture}
\to 0
\right) 
  ,   \;\;
    \bracor{
  \begin{tikzpicture}[anchorbase,xscale=-.45,yscale=.45]
    \draw [very thick, ->] (1,0) \pu (0,1.5);
    \draw [white, line width=.15cm] (0,0) \pu (1,1.5);
    \draw [very thick, ->] (0,0) \pu (1,1.5);
  \end{tikzpicture}
  }
 \!\! :=\!\left( 0 \to 
       q^{2}
    \begin{tikzpicture}[anchorbase,scale=.45]
\draw [very thick] (0,0) \pu (0,0.1) \ur (.5,.6);
\draw [very thick] (1,0) \pu (1,0.1) \ul (.5,.6);
\draw [double] (.5,.6) \pu (.5,.9);
\draw [very thick,->] (.5,.9) \lu (0,1.3) \pu (0,1.5);
\draw [very thick,->] (.5,.9) \ru (1,1.3) \pu (1,1.5);
    \end{tikzpicture} 
     \to 
        \uwave{
  q^{1}
  \begin{tikzpicture}[anchorbase,scale=.45]
    \draw [very thick, ->] (0,0) -- (0,1.5);
    \draw [very thick, ->] (1,0) -- (1,1.5);
  \end{tikzpicture}
    }
 \to 0
    \right)
\end{equation*}  
with the \uwave{underlined} terms in homological degree zero and with differentials modelled by the so-called zip and unzip foams, cf. \cite[Section 3.2]{MR2647055}:
\[\begin{tikzpicture} [fill opacity=0.2,  decoration={markings, mark=at position 0.6 with {\arrow{>}};  }, scale=.4,anchorbase]
  \draw [very thick, blue] (.8,3.7) .. controls (.75,2.2) and (-.75,2.2) .. (-.8,4.3);
  \fill [yellow,opacity=0.5] (.8,3.7) .. controls (.75,2.2) and (-.75,2.2) .. (-.8,4.3);
  \fill[blue] (-.8,4.3) to [out=200,in=340] (-2.3,4.2) 
  to  (-2.3,2.2) to (1.1,.95) to (1.1,2.95) to [out=160,in=280] (.8,3.7) 
  .. controls (.75,2.2) and (-.75,2.2) .. (-.8,4.3);
  \fill[blue] (-.8,4.3) to [out=120,in=340] (-1.3,4.9) 
  to  (-1.3,2.9) to (2.1,1.65) to (2.1,3.65) to [out=160,in=20] (.8,3.7)
  .. controls (.75,2.2) and (-.75,2.2) .. (-.8,4.3);
  \draw [double, postaction={decorate}] (.8,3.7) to (-.8,4.3);
  \draw[very thick] (-.8,4.3) to [out=200,in=340] (-2.3,4.2) 
  to  (-2.3,2.2) to (1.1,.95) to (1.1,2.95) to [out=160,in=280] (.8,3.7);
  \draw[very thick] (-.8,4.3) to [out=120,in=340] (-1.3,4.9) 
  to  (-1.3,2.9) to (2.1,1.65) to (2.1,3.65) to [out=160,in=20] (.8,3.7);
\end{tikzpicture}
\;\;,
\quad
\begin{tikzpicture} [fill opacity=0.2,  decoration={markings, mark=at position 0.6 with {\arrow{>}};  }, scale=.4,anchorbase]
  \draw [very thick, blue] (.8,3.7) .. controls (.75,5.2) and (-.75,5.2) .. (-.8,4.3);
  \fill [yellow,opacity=0.5] (.8,3.7) .. controls (.75,5.2) and (-.75,5.2) .. (-.8,4.3);
  \fill[blue] (-.8,4.3) to [out=200,in=340] (-2.3,4.2) 
  to  (-2.3,6.2) to (1.1,4.95) to (1.1,2.95) to [out=160,in=280] (.8,3.7) 
  .. controls (.75,5.2) and (-.75,5.2) .. (-.8,4.3);
  \fill[blue]  (-.8,4.3) to [out=120,in=340] (-1.3,4.9) 
  to  (-1.3,6.9) to (2.1,5.65) to (2.1,3.65) to [out=160,in=20] (.8,3.7)
  .. controls (.75,5.2) and (-.75,5.2) .. (-.8,4.3);
  \draw [double, postaction={decorate}] (.8,3.7) to (-.8,4.3);
  \draw[very thick] (-.8,4.3) to [out=200,in=340] (-2.3,4.2) 
  to  (-2.3,6.2) to (1.1,4.95) to (1.1,2.95) to [out=160,in=280] (.8,3.7);
  \draw[very thick] (-.8,4.3) to [out=120,in=340] (-1.3,4.9) 
  to  (-1.3,6.9) to (2.1,5.65) to (2.1,3.65) to [out=160,in=20] (.8,3.7);
\end{tikzpicture} 
\]
For a link diagram $L$ (with an ordering of the crossings), the chain complex $\bracor{L}$ is defined as the total complex of the hyper\emph{cube of resolutions}, obtained by resolving each crossing locally as shown above. 

As in Bar-Natan's description of Khovanov homology, when considered as an object of the bounded homotopy category of chain complexes of webs and foams, denoted $K^b(\foam)$, the complex $\bracor{L}$ is an invariant of the link represented by $L$, well-defined up to isomorphism. In particular, Reidemeister moves induce chain homotopy equivalences between the associated complexes.

The passage from $\bracor{L}$ to a chain complex of graded abelian groups proceeds in analogy with Khovanov's original theory, using a \emph{TQFT for webs and foams} that can now be described as a representable functor.

The Blanchet--Khovanov chain complex of $L$ is the chain complex of graded abelian groups defined as
\begin{equation} \label{Hom eq}
\CKhor(L):= \HOM_{\foam}^\bullet(\emptyset,\bracor{L})
\end{equation}
  where $\HOM_{\foam}^\bullet(\emptyset,\bracor{L})$ denotes the chain complex of bihomogeneous maps between the $\Z\times\Z$-graded objects $\emptyset$ and $\bracor{L}$, with the differential induced by the differential on the target (the source has trivial differential).
  
The oriented (or $\glnn{2}$) Khovanov homology of $L$ is the bigraded abelian group 
\[\Khor(L):=H^\bullet(\CKhor(L)).\]

It follows from the discussion above that $\Khor(L)$ is an invariant of the link represented by $L$, well-defined up to isomorphism of bigraded abelian groups. In fact, these isomorphisms can be chosen coherently, as shown by Blanchet's main result, which we paraphrase as follows.

\begin{theorem} \label{Blanchet thm}
Let $C$ be an oriented smooth link cobordism, properly embedded in $\R^3\times I$, between oriented links represented by diagrams $L$ and $L'$. Suppose that $C$ is in generic position, so that it can be represented by a finite sequence of Reidemeister and Morse moves, transforming $L$ into $L'$. There exists an assignment of Reidemeister isomorphisms and Morse maps on the level of $\Khor$, so that the composite map $\Khor(C)$ is invariant under isotopy rel boundary on $C$. As a consequence, $\glnn{2}$ Khovanov homology constitutes a functor:
\[\Khor\colon \begin{Bmatrix}
\text{links embedded in } \R^3
\\
\text{link cobordisms in } \R^3\times I \text{/isotopy}
\end{Bmatrix} \xrightarrow{} \mathrm{gr}^{\Z\times \Z}\cat{Abgrp} \]
\end{theorem}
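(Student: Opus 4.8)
The plan is to promote the chain-level invariance of $\bracor{L}\in K^b(\foam)$, already recorded above, to a functoriality statement by working in the canopolis of tangles, webs and foams in which $\bracor{-}$ is defined. First I would fix, once and for all, explicit representatives for the Reidemeister chain homotopy equivalences: these can be taken to be supported in a neighbourhood of the crossings involved in the move and built from the elementary zip/unzip and cup/cap foams, after simplification by the web isomorphisms and Gaussian elimination, exactly as in Bar-Natan's treatment of the unoriented theory. An elementary Morse move (birth, death, or saddle) applied to a diagram induces a foam between the corresponding cubes of resolutions, hence a chain map on $\bracor{-}$; applying $\HOM_{\foam}^\bullet(\emptyset,-)$ and passing to homology turns all of these into maps on $\Khor$. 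Writing a generic cobordism $C$ as a finite sequence of such elementary moves and composing yields the candidate $\Khor(C)$, so the content of the theorem is that this composite depends only on the isotopy class of $C$ rel boundary.

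By the Carter--Saito theorem \cite{MR1238875}, any two movies presenting isotopic cobordisms differ by a finite sequence of \emph{movie moves}, so it suffices to check that each movie move leaves the induced map on $\Khor$ unchanged. I would sort these into two classes. The first consists of the moves that merely reshuffle Reidemeister and Morse moves locally: for these, invariance follows from the local foam relations of Definition~\ref{def:Tfoam} --- in particular \eqref{sl2closedfoam}, \eqref{sl2neckcutting_enh_2lab}, \eqref{sl2Fig5Blanchet_1} and the dot-sliding relation \eqref{eq:dotslide} --- together with functoriality of $\HOM_{\foam}^\bullet(\emptyset,-)$ and the naturality and interchange axioms of the canopolis, so that the two composite chain maps are already chain homotopic and hence agree on homology. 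The second, more delicate class consists of the movie moves for which the corresponding composite in Bar-Natan's \emph{unoriented} cobordism category is only well defined up to an overall sign; these are precisely the moves responsible for the failure of strict functoriality of ordinary Khovanov homology, cf. \cite[Lemma~5.2]{MR2113903}.

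The crux of the argument is that in Blanchet's oriented model this second class of moves is resolved on the nose. The extra data carried by a $\glnn{2}$ foam --- the compatible orientations of its $1$- and $2$-labelled facets, together with the rule emphasised after \eqref{sl2Fig5Blanchet_1} that reversing all orientations negates those relations --- rigidifies $\foam$ enough that, when one traces a sign-sensitive movie move through the cube of resolutions, the orientations of the facets appearing in the two composite foams pin down the otherwise ambiguous coefficient and force the two composites to be \emph{equal}, not merely equal up to sign. I would therefore go through each offending movie move, record the foams produced by the two sides, and verify, using \eqref{sl2Fig5Blanchet_1} and \eqref{eq:dotslide}, that the orientation bookkeeping makes them agree. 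The hard part will be exactly this bookkeeping: not any individual computation, but arranging a single consistent choice of the Reidemeister isomorphisms --- including the scalars entering the $R1$-type moves --- that makes \emph{all} movie moves commute simultaneously rather than one at a time.

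Once movie-move invariance is established, the functor axioms are automatic: the identity cobordism is presented by the trivial movie and induces the identity on $\Khor$, while concatenation of movies realizes composition of cobordisms, so $\Khor(C'\circ C)=\Khor(C')\circ\Khor(C)$. This yields the functor $\Khor\colon\{\text{links in }\R^3,\ \text{cobordisms}/\text{isotopy}\}\to\mathrm{gr}^{\Z\times\Z}\cat{Abgrp}$. This is in essence Blanchet's argument in \cite{MR2647055}; one could equally organize the verification using the disoriented cobordism calculus of Clark--Morrison--Walker \cite{MR2496052} or the foam-evaluation techniques behind \cite{1702.04140}, but the shape of the proof --- local Reidemeister maps, movie moves, and orientation-rigidified signs --- would be the same.
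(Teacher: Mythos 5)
The paper does not prove this theorem; it is explicitly presented as a paraphrase of Blanchet's main result and cited directly to \cite{MR2647055}, with no argument given. Your sketch is therefore not being measured against a proof in this paper, but against Blanchet's original argument, and at the level of strategy it matches that argument: present the cobordism as a movie, reduce invariance to the Carter--Saito movie moves, and observe that the orientation data on $\glnn{2}$ foams (in particular the sign change under orientation reversal built into \eqref{sl2Fig5Blanchet_1}) rigidifies the troublesome moves that are only sign-ambiguous in the unoriented Bar--Natan theory. You correctly identify the genuine content as the verification, move by move, that a single consistent choice of Reidemeister chain maps makes all movie moves commute on the nose; your proposal stops short of carrying out that case analysis, which is exactly the bulk of Blanchet's paper and cannot be waved away by appealing to the orientation convention alone. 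As long as you are explicit that what you have written is a scaffolding of Blanchet's proof rather than a self-contained one, the proposal is sound; if you intend it as a complete argument, the movie-move computations still need to be done.
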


Other functorial versions of Khovanov homology have been constructed by Caprau~\cite{MR2443094}, Clark--Morrison--Walker~\cite{MR2496052}, Sano \cite{2008.02131}, and Vogel~\cite{MR4096813}. Blanchet's approach to functorial link homologies via foams works over the integers and is distinguished in that it extends to $\glnn{N}$, see Ehrig--Tubbenhauer--Wedrich~\cite{MR3877770}, and to links in $S^3$, see Morrison--Walker--Wedrich~\cite{2019arXiv190712194M}.

\section{Bases for web spaces} \label{sec: Bases for web spaces}

The goal of this section is to introduce generators of the webs spaces, and to analyze the coefficients of the Blanchet-Khovanov differential, as well as of the elementary link cobordisms with respect to these generators. These results underlie the formulation of the stable homotopy type in Section \ref{stable homotopy sec}.

\subsection{Construction of bases}

The following local relations hold in the category $\foam$ and will be used
throughout this section to simplify webs and foams.

\begin{lemma}\label{lem:webisos}
There are isomorphisms between webs in $\foam$ which differ only in a disk as
shown (or as shown, but with all orientations reversed):
\begin{gather}
\label{eq:circles}
\begin{tikzpicture}[fill opacity=.2,anchorbase,scale=.3]
\draw[very thick, directed=.55] (1,0) to [out=0,in=270] (2,1) to [out=90,in=0] (1,2)to [out=180,in=90] (0,1)to [out=270,in=180] (1,0);
\end{tikzpicture} 
\quad\cong\quad
q\;\emptyset \oplus q^{-1} \emptyset
\quad,\quad
\begin{tikzpicture}[fill opacity=.2,anchorbase,scale=.3]
\draw[double, directed=.55] (1,0) to [out=0,in=270] (2,1) to [out=90,in=0] (1,2)to [out=180,in=90] (0,1)to [out=270,in=180] (1,0);
\end{tikzpicture} 
\quad\cong\quad
\emptyset 
\\
\label{eq:squares}
\begin{tikzpicture}[anchorbase,scale=.5]
  \draw [double,->] (0,0) \ur  (.5,.66) (.5,1.33) \lu (0,2);
  \draw [very thick,<-] (1,0)to (1,0.2) \ul (.5,.66);
  \draw [very thick] (1,2)to (1,1.8) \dl (.5,1.33);
  \draw [very thick] (0.5,0.66) to (0.5,1.33);
  \end{tikzpicture}
  \quad =\quad
  \begin{tikzpicture}[anchorbase,scale=.5]
    \draw [double,->] (0,0) to  (0,2);
  \draw [very thick,->] (1,2) to (1,0);
  \end{tikzpicture}
\quad, \quad
\begin{tikzpicture}[anchorbase,scale=.5]
\draw [double,->] (0,0) to  (0,2);
\draw [double,->] (1,2) to (1,0);
\end{tikzpicture}
\quad \cong \quad
\begin{tikzpicture}[anchorbase,scale=.5]
\draw [double,->] (0,0) to (0,.5) to [out=90,in=90] (1,.5) to (1,0);
\draw [double,->] (1,2) to (1,1.5) to [out=270,in=270] (0,1.5) to (0,2);
\end{tikzpicture}
\end{gather}
\end{lemma}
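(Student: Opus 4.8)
The plan is to prove each of the listed isomorphisms by exhibiting explicit foam morphisms realizing it, checking that these are grading-homogeneous of the degree demanded by the $q$-shifts via the rule $2\#\mathrm{dots}-\chi(c(F))=k-l$, and verifying that they are mutually inverse using only Blanchet's local relations \eqref{sl2closedfoam}, \eqref{sl2neckcutting_enh_2lab}, and \eqref{sl2Fig5Blanchet_1}. The two isomorphisms in \eqref{eq:circles} are instances of Bar-Natan's \emph{delooping}, transplanted to $\foam$; the relations in \eqref{eq:squares} are then handled by the same scheme, now also invoking \eqref{sl2Fig5Blanchet_1}.

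For the $1$-labelled circle I would take the four foams given by a cup-disk and a cap-disk, each carrying $0$ or $1$ dot. The grading rule pins down the shifts, giving $(\text{dotted cup})\colon q\,\emptyset\to\bigcirc$, $(\text{cup})\colon q^{-1}\emptyset\to\bigcirc$, $(\text{cap})\colon\bigcirc\to q\,\emptyset$, $(\text{dotted cap})\colon\bigcirc\to q^{-1}\emptyset$. Composing the pair $\bigcirc\to q\,\emptyset\oplus q^{-1}\emptyset$ with the pair $q\,\emptyset\oplus q^{-1}\emptyset\to\bigcirc$ in one order produces closed $1$-labelled spheres carrying $1$, $0$, or $2$ dots in the four matrix entries, which evaluate to $1$, $0$, $0$ by \eqref{sl2closedfoam}, so this composite is the identity on $q\,\emptyset\oplus q^{-1}\emptyset$; the composite in the other order, $\bigcirc\to q\,\emptyset\oplus q^{-1}\emptyset\to\bigcirc$, is literally the right-hand side of the $1$-labelled neck-cutting relation (third relation of \eqref{sl2closedfoam}), hence equals $\Id_\bigcirc$. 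For the $2$-labelled circle the $2$-labelled cup and cap carry no dots; using the $2$-sphere relation ($=-1$) and the $2$-labelled neck-cutting relation ($2$-tube $=-\,(2\text{-cup})\circ(2\text{-cap})$) from \eqref{sl2neckcutting_enh_2lab}, one checks that $(2\text{-cup})$ and $-(2\text{-cap})$ are mutually inverse and both of degree $0$, giving $\bigcirc_2\cong\emptyset$.

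For \eqref{eq:squares} I would argue analogously. The second relation of \eqref{sl2Fig5Blanchet_1} encodes the reduction of a bigon between a $1$- and a $2$-labelled edge, so for the bubble relation I would write down the evident foam morphisms built from zip, unzip and dotted cup/cap foams supported near the digon; both composites, after applying neck-cutting to the internal $1$-labelled facet, reduce to closed evaluations resolved by \eqref{sl2closedfoam} and the mixed relation in \eqref{sl2neckcutting_enh_2lab} (alternatively one may bootstrap directly from the $1$-labelled delooping just established). Since the two webs then carry the same boundary data and the connecting foams are of degree $0$, the isomorphism is recorded as an equality. For two parallel $2$-labelled strands versus the cup-cap of $2$-labelled strands, I would use the $2$-labelled saddle foams in both directions; here the two composites are isotopic to the identity up to a sign, and fixing that sign via the $2$-labelled relations of \eqref{sl2neckcutting_enh_2lab} yields mutually inverse degree-$0$ maps, reflecting the invertibility of the $2$-labelled strand already visible in $\bigcirc_2\cong\emptyset$.

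The conceptual input is entirely Blanchet's local relations; the real work is bookkeeping, and the main obstacle is sign and orientation management. Blanchet's relations are orientation-sensitive --- as noted after \eqref{sl2Fig5Blanchet_1}, reversing all orientations flips the sign in \eqref{sl2neckcutting_enh_2lab} and \eqref{sl2Fig5Blanchet_1} --- so one must fix orientation conventions for every cup, cap, zip, unzip, and saddle foam and carry the induced signs through each composite, while simultaneously keeping the $q$-degrees consistent. I expect the most delicate step to be verifying $fg=\Id$ for the bigon relation, where neck-cutting inside the foam produces several terms whose signs and dot placements must be reconciled by applying \eqref{sl2closedfoam}--\eqref{sl2Fig5Blanchet_1} in sequence.
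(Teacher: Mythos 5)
The paper's proof of this lemma is a one-line citation to Blanchet~\cite{MR2647055}, so your proposal supplies the argument that the paper outsources. Your strategy---writing down explicit (dotted) cup/cap, zip/unzip and saddle foams, pinning the $q$-shifts down with $2\#\mathrm{dots}-\chi(c(F))=k-l$, and verifying mutual invertibility via the sphere, neck-cutting, and bigon relations of \eqref{sl2closedfoam}--\eqref{sl2Fig5Blanchet_1}---is the standard delooping argument, matches what Blanchet does, and is correct.
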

\begin{proof}
As in \cite{MR2647055}.
\end{proof}

\begin{lemma}
  \label{lem:regionsimpl}
  For every $n\geq 1$, there is an isomorphism in $\foam$ that simplifies the
coherently oriented (clockwise or anti-clockwise) $2n$-gon web $W_n$, e.g.:
\[
  \begin{tikzpicture}[anchorbase, scale=.5]
    \draw [very thick] (.5,0) -- (.5,.5);
    \draw [double, directed=0.55] (.5,.5) .. controls (.4,.35) and (0,.6) .. (0,1) .. controls (0,1.4) and (.4,1.65) .. (.5,1.5);
    \draw [very thick,rdirected=.55] (.5,.5) .. controls (.6,.35) and (1,.6) .. (1,1) .. controls (1,1.4) and (.6,1.65) .. (.5,1.5);
    \draw [very thick, ->] (.5,1.5) -- (.5,2);
    \end{tikzpicture} 
    \quad\overset{n=1}{\cong} \quad
    \begin{tikzpicture}[anchorbase, scale=.5]
    \draw [very thick,->] (.5,0) -- (.5,2);
    \end{tikzpicture}   
    \quad,\quad
    %%%
    \begin{tikzpicture}[anchorbase, scale=.5]
    \draw [very thick, ->] (-.5,0) to (0,.5) (0,1.5) to (-.5,2);
    \draw [double] (0,.5) to (0,1.5);
    \draw [very thick,->] (1.5,2) to (1,1.5) (1,0.5) to (1.5,0);
    \draw [double] (1,.5) to (1,1.5);
    \draw [very thick, directed=.65] (0,1.5) to (1,1.5);
    \draw [very thick, directed=.65] (1,0.5) to (0,0.5);
    \end{tikzpicture} 
    \quad\overset{n=2}{\cong} \quad
    \begin{tikzpicture}[anchorbase, scale=.5]
    \draw [very thick,->] (-.5,0) to [out=45,in=135] (1.5,0);
    \draw [very thick,->] (1.5,2) to [out=215,in=315] (-.5,2);
    \end{tikzpicture}     
    \quad,\quad
    %%%
    \begin{tikzpicture}[anchorbase, scale=.5]
    \draw [very thick, ->] (-.5,0) to (0,.5) (0,1.5) to (-.5,2);
    \draw [double] (0,.5) to (0,1.5);
    \draw [very thick,->] (1,2) to (1,1.5) (1,0.5) to (1,0);
    \draw [double] (1,.5) to (2,.5);
    \draw [double] (1,1.5) to (2,1.5);
    \draw [very thick, ->] (2.5,0) to (2,.5) (2,1.5) to (2.5,2);
    \draw [very thick, directed=.65] (0,1.5) to (1,1.5);
    \draw [very thick, directed=.65] (1,0.5) to (0,0.5);
    \draw [very thick, directed=.65] (2,1.5) to (2,0.5);
    \end{tikzpicture} 
    \quad\overset{n=3}{\cong}\quad
    \begin{tikzpicture}[anchorbase, scale=.5]
    \draw [very thick,->] (-.5,0) to (-.25,0.25) to [out=45,in=90] (1,.15) to (1,0);
    \draw [very thick,->] (1,2) to (1,1.75) to [out=270,in=315] (-.25,1.85) to (-.5,2);
    \draw [very thick, ->] (2.5,0) to [out=135,in=270] (2,1) to [out=90,in=215] (2.5,2);
    \end{tikzpicture} 
\]
The isomorphism can be chosen to be a foam $F_n$, such that $c(F_n)=c(W_n)\times
[0,1]$ and such that $F_n$ contains a single $2$-labeled disk in the region shown.
\end{lemma}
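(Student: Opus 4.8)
The plan is to argue by induction on $n$, at each step peeling off one ``pair of sides'' of the polygon by means of the first isomorphism in~\eqref{eq:squares} of Lemma~\ref{lem:webisos}. For the base case $n=1$, the web $W_1$ is a $1$-labeled strand carrying a bigon made of a $1$-labeled and a $2$-labeled edge, so $c(W_1)$ is a single strand; that $W_1$ is isomorphic to this strand via a foam $F_1$ with $c(F_1)=c(W_1)\times[0,1]$ and a single $2$-labeled disk — namely the disk bounded by the bigon, pushed into $\R^2\times[0,1]$ — is a direct consequence of Blanchet's relations~\eqref{sl2closedfoam},~\eqref{sl2neckcutting_enh_2lab},~\eqref{sl2Fig5Blanchet_1}. (One convenient derivation closes up the first relation in~\eqref{eq:squares} by a $2$-labeled arc along the left side and invokes the evaluation of the $2$-labeled circle from~\eqref{eq:circles}.)

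For the inductive step, fix $n\geq 2$ and a cyclic labelling $v_1,\dots,v_{2n}$ of the vertices of $W_n$. Because $W_n$ is coherently oriented, its polygon edges all point the same way around the cycle; hence the vertices alternate between split vertices ($2\to 1+1$) and merge vertices ($1+1\to 2$), and the external legs alternate between outgoing and incoming. The key observation is that a disk neighbourhood of any $1$-labeled polygon edge $e$ of $W_n$ is, after possibly reversing all orientations, precisely the left-hand side of the first relation in~\eqref{eq:squares}: the two endpoints of $e$ play the role of the two trivalent vertices, their external legs play the role of the two $1$-labeled boundary strands on the right, and the two neighbouring $2$-labeled polygon edges play the role of the two $2$-labeled boundary strands on the left. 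That relation provides an isomorphism $\phi$ in $\foam$, equal to the identity outside the disk, which replaces $W_n$ by the disjoint union of a $1$-labeled arc joining the two external legs at the endpoints of $e$ and the web obtained by deleting those two vertices and fusing the two neighbouring $2$-labeled edges into a single one. A short combinatorial check shows that the latter web is again a coherently oriented $2(n-1)$-gon web, i.e.\ is isotopic to $W_{n-1}$; moreover, since the two removed external legs are consecutive on the boundary circle — and hence are matched to each other in the planar matching $c(W_n)$ — deleting them leaves exactly $c(W_{n-1})$, so that $c(W_{n-1})\sqcup\mathrm{arc}=c(W_n)$.

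Granting the inductive hypothesis, with foam $F_{n-1}$ for $W_{n-1}$, I would set
\[
F_n:=\bigl(F_{n-1}\sqcup\Id_{\mathrm{arc}}\bigr)\circ\phi\colon W_n\longrightarrow c(W_{n-1})\sqcup\mathrm{arc}=c(W_n),
\]
an isomorphism as a composite of isomorphisms. Since deleting $2$-labeled facets commutes with composition and disjoint union, and since $c(\phi)$ is the product cobordism (deleting its $2$-labeled facets turns the two trivalent vertices into interior points of a $1$-labeled arc and leaves a product), one obtains $c(F_n)=c(W_n)\times[0,1]$. Finally, the $2$-labeled part of $\phi$ consists of one ``saddle'' disk fusing the two neighbouring $2$-labeled edges of $W_n$, together with $n-2$ product strips carrying the remaining $2$-labeled edges straight through; gluing $F_{n-1}$'s single $2$-labeled disk onto this along the $n-1$ $2$-labeled edges of $W_{n-1}$ produces a connected orientable surface with boundary whose Euler characteristic is $(n-1)+1-(n-1)=1$, hence a disk, and it sits in the region bounded by $W_n$. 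Thus $F_n$ has the required form.

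The step I expect to be the main annoyance is the orientation bookkeeping in the inductive step: one must verify that the neighbourhood of a $1$-labeled edge of a coherently oriented $2n$-gon always presents as one of the two orientation variants of the first relation in~\eqref{eq:squares}, and that the fused web is again coherently oriented. Once this is settled, the only other nonroutine point is the Euler-characteristic count tracking how the $2$-labeled facets accumulate into a single disk; everything else is unwinding definitions together with the already-established relations of Lemma~\ref{lem:webisos}.
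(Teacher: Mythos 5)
Your proof is correct and takes essentially the same approach as the paper's (whose entire proof is two sentences): apply the undocking isomorphism from~\eqref{eq:squares} at the $1$-labeled polygon edges and finish by removing a $2$-labeled circle via~\eqref{eq:circles}. The paper phrases this as a single global simplification that produces a central $2$-labeled circle, whereas you organize the same sequence of moves as an induction on $n$, peeling off one side at a time and isolating the circle removal in the base case; the resulting foam is the same.
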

\begin{proof}
  Use the local isomorphisms from \eqref{eq:squares} to undock the 2-labeled edges from the boundary edges, leaving a central 2-labeled circle, which can then be removed
  via \eqref{eq:circles}.
\end{proof}

The isomorphisms in Lemma~\ref{lem:regionsimpl} simplify webs by removing coherently (anti-) clockwise oriented regions by undocking 2-labeled edges on the left (right) side of the remaining 1-labeled edges. The following lemma shows that this is possible for every closed web, if we also allow exceptional cases of the following type:

\[
\begin{tikzpicture}[xscale =-.5, smallnodes,yscale=.5,rotate=270,anchorbase]
  \draw[very thick] (0,2.25) \ur (1.5,2.75) \rd (3,2.25) to (3,-3.25) \dl(1.5,-3.75) \lu (0,-3.25) to [out=90,in=210] (.5,-2.5);
  \draw[very thick] (1,-2.75) to  (.5,-2.5);
  \draw[double] (.5,-1.75) to (.5,-2.5);
  \draw[very thick] (0,-1)  to [out=270,in=150] (.5,-1.75);
  \draw[very thick] (1,-1.5) to  (.5,-1.75);
  \draw[very thick, dotted] (0,-1) to (0,0);
  \draw[very thick] (0,0) to [out=90,in=210] (.5,.75);
  \draw[very thick] (1,.5) to  (.5,.75);
  \draw[double] (.5,.75) to (.5,1.5);
  \draw[very thick,<-] (0,2.25)  to [out=270,in=150] (.5,1.5);
  \draw[very thick] (1,1.75) to  (.5,1.5);
  \draw (-.25,-.5) \ur (.5,0) \ru (1,.5) to  (1,1.75) \ur (1.75,2.25) \rd (2.5,1.75) to (2.5,-2.75) \dl (1.75,-3.25) \lu (1,-2.75) to (1,-1.75) \ul (.5,-1) \lu (-.25,-.5) ;
  \node at (1.75,-.5) {$W$};
\end{tikzpicture}  
\longrightarrow
\begin{tikzpicture}[xscale =-.5, smallnodes,yscale=.5,rotate=270,anchorbase]
  \draw[double,->] (-.5,2.25) \ur (1.5,3.25) \rd (3.5,2.25) to (3.5,-3.25) \dl(1.5,-4.25) \lu (-.5,-3.25) to (-.5,2.25);
  \draw[very thick,<-] (0,2.25) \ur (1.5,2.75) \rd (3,2.25) to (3,-3.25) \dl(1.5,-3.75) \lu (0,-3.25);
  \draw[very thick] (1,-2.75) to [out=150,in=90]  (0,-3.25);
  \draw[very thick] (0,-1)  to [out=270,in=210] (1,-1.5);
  \draw[very thick, dotted] (0,-1) to (0,0);
  \draw[very thick] (0,0) to [out=90,in=150] (1,.5);
  \draw[very thick] (0,2.25)  to [out=270,in=210] (1,1.75);
  \draw (-.25,-.5) \ur (.5,0) \ru (1,.5) to  (1,1.75) \ur (1.75,2.25) \rd (2.5,1.75) to (2.5,-2.75) \dl (1.75,-3.25) \lu (1,-2.75) to (1,-1.75) \ul (.5,-1) \lu (-.25,-.5) ;
  \node at (1.75,-.5) {$W$};
\end{tikzpicture}  
\]
%\[\includegraphics[width=.5\textwidth]{outside.PNG}\]

Here we consider the outside region as a coherently clockwise oriented and perform undocking towards the left. The resulting 2-labeled outside circle can be removed by a cap foam after simplifying the remaining web nested inside it, as described in the following lemma.

\begin{lemma}
  \label{lem:existregion} Every closed $\glnn{2}$ web $W$ is isomorphic to a direct sum of grading shifts of the empty web via an isomorphism which is composed of exclusively clockwise local isomorphisms from Lemma~\ref{lem:regionsimpl} (including `outside' versions) and the circle removal isomorphisms \eqref{eq:circles}. As such, this isomorphism is unique.
  The statement with exclusively anti-clockwise isomorphisms holds analogously.
\end{lemma}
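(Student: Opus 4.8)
The plan is to prove the claim by induction on the complexity of the closed web $W$, where complexity is measured (for instance) by the number of vertices, or equivalently the number of $2$-labelled edges. The base case is a web with no vertices: by Lemma~\ref{lem:webisos}, each component is a $1$-labelled or $2$-labelled circle, which is removed by one of the isomorphisms in \eqref{eq:circles}; these are clockwise local moves (in the degenerate sense), so the base case holds. For the inductive step, I would argue that any closed web $W$ with at least one vertex contains an innermost coherently oriented region, i.e.\ a face of the planar graph underlying $W$ whose boundary is a $2n$-gon that is coherently clockwise (or anti-clockwise) oriented, and which bounds a disk meeting no other edges of $W$ in its interior. Granting such a region, apply the corresponding clockwise isomorphism $F_n$ from Lemma~\ref{lem:regionsimpl} (or its outside variant, if the region in question is the unbounded face) to reduce the number of vertices, then invoke the inductive hypothesis on the simpler web. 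The circle that appears after undocking is handled by \eqref{eq:circles} as in the proof of Lemma~\ref{lem:regionsimpl}.

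The main obstacle, and the combinatorial heart of the argument, is the existence of a coherently oriented innermost region — this is where the ``exceptional'' outside cases enter. The point is a discrete Gauss--Bonnet / Euler-characteristic count: orient the plane and assign to each face of $W$ a turning number computed from how the boundary edges of that face are oriented relative to its boundary traversal. The orientations of the web edges, together with the trivalent merge/split condition at vertices, constrain these local turning contributions, and summing over all faces of $S^2$ (compactifying $\R^2$) forces at least one bounded or unbounded face to be coherently oriented — possibly after accounting for a ``zig-zag''/serpentine face of the type pictured before the lemma, which is exactly why the outside versions of the Lemma~\ref{lem:regionsimpl} isomorphisms are included in the allowed moves. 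I would isolate this as a separate combinatorial sublemma: every closed $\glnn{2}$ web admits a face to which some clockwise (resp.\ anti-clockwise) isomorphism from Lemma~\ref{lem:regionsimpl}, or its outside version, applies.

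Finally, for the uniqueness statement, observe that after all reductions the resulting object is a direct sum of grading shifts of $\emptyset$, and the total $q$-grading is determined by the Poincar\'e polynomial (the graded dimension) of $\Hom_{\foam}(\emptyset, W)$, which is a link-diagram-independent invariant of $W$ computed by the Temperley--Lieb / $\glnn{2}$ web evaluation; hence the isomorphism type of the target is forced. The uniqueness of the \emph{isomorphism} itself follows because each clockwise local move in Lemma~\ref{lem:regionsimpl} and each circle removal in \eqref{eq:circles} is a specified foam (normalised by the requirement that $c(F_n) = c(W_n)\times[0,1]$ with a single $2$-labelled disk), and composites of such specified foams along the canonical reduction order agree: any two clockwise reduction sequences differ by moves supported in disjoint disks, which commute, so the resulting foam is independent of the order. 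I expect the orientation/Euler-characteristic sublemma to be the step requiring the most care; the rest is bookkeeping with the already-established local isomorphisms.
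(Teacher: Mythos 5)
Your overall architecture matches the paper's: induct on the number of vertices, find a coherently oriented face (bounded or unbounded) and apply the local simplification from Lemma~\ref{lem:regionsimpl} (or its outside version), then recurse. You also correctly identify the combinatorial sublemma --- the existence of a face to which such a move applies --- as the crux. The paper handles this sublemma rather differently and more economically than the route you sketch. Rather than a Gauss--Bonnet/turning-number sum, the paper labels each region of $\R^2\setminus W$ with a \emph{winding number} around a fixed exterior base point $p$ (the algebraic intersection number of a path to $p$ with $W$, with $2$-labelled edges counting $\pm 2$), and observes that a region attaining the \emph{maximum} winding number must have coherently clockwise boundary, since otherwise one could cross an edge into a region of strictly larger winding number. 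This immediately gives the dichotomy driving the induction: if the max is positive, a bounded region can be removed; if the max is zero, the outside region attains it, so the outermost cycle is coherently counter-clockwise and one applies the outside move --- exactly the ``serpentine'' case you mention. The max-principle argument avoids the summation bookkeeping you would need, and also hands you, for free, the ingredient for uniqueness: regions of maximal winding number cannot share an edge, so the moves at each stage are supported in disjoint disks and far-commute.

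Two points where your write-up is incomplete or slightly off. First, the Euler-characteristic argument you propose is not filled in; you flag this yourself, but as stated it is a genuine gap, and it is unclear that a turning-number count on $S^2$ alone forces a face whose \emph{entire} boundary is coherently oriented (a positive total turning number on a face does not preclude a single edge of the ``wrong'' orientation). The local max-principle is doing real work that a global sum may not replicate without extra effort. Second, your uniqueness argument asserts that any two clockwise reduction sequences differ by moves in disjoint disks; this needs justification, and the justification supplied by the paper is precisely the non-adjacency of maximal-winding-number regions, which you do not have access to in the Gauss--Bonnet framework. (Your remark that the isomorphism type of the target is forced by the graded dimension is correct but is not what the uniqueness clause is about --- it concerns the morphism, not the object.) Also, the paper first reduces to the case of a connected web, treating components from the inside out; you should make this reduction explicit, since your ``innermost coherently oriented face'' might otherwise belong to a nested component.
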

\begin{proof} The statement is trivial for webs $W$ without trivalent vertices, so let us assume that $W$ has trivalent vertices. Moreover, we will assume that $W$ is connected, for otherwise we can treat the connected components independently, starting with an innermost one with respect to the nesting condition. Differences in ordering these simplifications give the same result due to far-commutativity.

Fix a base point $p\in \R^2\setminus W$ in the outside region  . We now start by labeling each region in $\R^2\setminus W$ with its `winding number' around $p$. For this, pick a point $q$ in the region and a path $\gamma\colon [0,1]\longrightarrow \R^2$ with $\gamma(0)=q$ and $\gamma(1)=p$. Then we label the region containing $q$ by the algebraic intersection number of $\gamma$ with $W$. (Here each signed intersection with the $2$-labeled edges counts as $\pm 2$.) E.g. the region enclosed by a clockwise circle will have winding number $1$. The outside region has winding number $0$, by definition. 

Now we have two cases to consider. If the maximal winding number of regions is positive, then each maximal region has a coherently clockwise oriented boundary and can be removed via Lemma~\ref{lem:regionsimpl}. 

Otherwise, the maximal winding number is zero, and the outside region attains this maximum. This implies that the outermost cycle in the web (the boundary of the region containing $p$) is coherently counter-clockwise oriented. In this case, we use the `outside version' of the undocking move to simplify the web. 

This algorithm terminates since it strictly reduces the number of trivalent vertices in every step. Since regions of maximal winding number cannot be adjacent, the order of simplifying is again irrelevant due to far-commutativity.
\end{proof}

Lemma~\ref{lem:existregion} allows us to build a basis for the vector space $\HOM_{\foam}(\emptyset, W)$ for any web $W$. 
One of the possible variations still allowed are rescalings of the circle removal isomorphisms from \eqref{eq:circles} by signs. To simplify local computations we choose a rescaling depending of the auxiliary data of a flow, that we now introduce. 

\begin{definition} A \emph{flow} $f$ with values $\{+1,-1\}$ \emph{on a web} $W$ in $\foam$ assigns to every 1-labeled
edge of $W$ an element of $\{+1,-1\}$ and to every 2-labeled edge of $W$ the entire set
$\{+1,-1\}$, such that every trivalent vertex in $W$ has adjacent labels as in the following:
\[
  \begin{tikzpicture}[scale =.5, smallnodes,yscale=1]
    \draw[very thick] (0,0) node[below,xshift=-2pt]{$+1$} to [out=90,in=210] (.5,.75);
    \draw[very thick] (1,0) node[below,xshift=2pt]{$-1$} to [out=90,in=330] (.5,.75);
    \draw[double,->] (.5,.75) to (.5,1.5) node[above]{$\{+1,-1\}$};
  \end{tikzpicture}  
  \quad,\quad
  \begin{tikzpicture}[scale =.5, smallnodes,yscale=1]
    \draw[very thick] (0,0) node[below,xshift=-2pt]{$-1$} to [out=90,in=210] (.5,.75);
    \draw[very thick] (1,0) node[below,xshift=2pt]{$+1$} to [out=90,in=330] (.5,.75);
    \draw[double,->] (.5,.75) to (.5,1.5) node[above]{$\{+1,-1\}$};
  \end{tikzpicture}  
  \quad,\quad
  \begin{tikzpicture}[scale =.5, smallnodes,yscale=-1]
    \draw[very thick,<-] (0,0) node[above,xshift=-2pt]{$+1$} to [out=90,in=210] (.5,.75);
    \draw[very thick,<-] (1,0) node[above,xshift=2pt]{$-1$} to [out=90,in=330] (.5,.75);
    \draw[double] (.5,.75) to (.5,1.5) node[below]{$\{+1,-1\}$};
  \end{tikzpicture}  
  \quad,\quad
  \begin{tikzpicture}[scale =.5, smallnodes,yscale=-1]
    \draw[very thick,<-] (0,0) node[above,xshift=-2pt]{$-1$} to [out=90,in=210] (.5,.75);
    \draw[very thick,<-] (1,0) node[above,xshift=2pt]{$+1$} to [out=90,in=330] (.5,.75);
    \draw[double] (.5,.75) to (.5,1.5) node[below]{$\{+1,-1\}$};
  \end{tikzpicture} 
\]
I.e. the \emph{flow condition} is satisfied at every vertex. Similarly, a \emph{flow
on a foam} associates to 1-labeled facets elements of $\{+1,-1\}$ and to
2-labeled facets the entire set $\{+1,-1\}$, such that the flow condition is
satisfied at every seam. A flow on a foam induces a flow on its boundary webs.
When considering a foam $F\colon W\longrightarrow V$ between webs with flows $f_W$ and
$f_V$, then we say these flows are \emph{compatible} with $F$ if there exists a flow $f$
on $F$ that restricts to $f_W$ and $f_V$ on the corresponding boundary
components.

We say a flow on a web $W$ is \emph{admissible} if every non-closed 2-labeled edge has
a neighborhood of the following two types:
\begin{equation}
\label{eqn:adm-res}
  \begin{tikzpicture}[scale =.5, smallnodes,yscale=1]
    \draw[very thick] (0,0) node[below,xshift=-2pt]{$+1$} to [out=90,in=210] (.5,.75);
    \draw[very thick] (1,0) node[below,xshift=2pt]{$-1$} to [out=90,in=330] (.5,.75);
    \draw[double] (.5,.75) to (.5,1.5);
    \draw[very thick,<-] (0,2.25) node[above,xshift=-2pt]{$+1$} to [out=270,in=150] (.5,1.5);
    \draw[very thick,<-] (1,2.25) node[above,xshift=2pt]{$-1$} to [out=270,in=30] (.5,1.5);
  \end{tikzpicture}  
  \quad,\quad
  \begin{tikzpicture}[scale =.5, smallnodes,yscale=1]
    \draw[very thick] (0,0) node[below,xshift=-2pt]{$-1$} to [out=90,in=210] (.5,.75);
    \draw[very thick] (1,0) node[below,xshift=2pt]{$+1$} to [out=90,in=330] (.5,.75);
    \draw[double] (.5,.75) to (.5,1.5);
    \draw[very thick,<-] (0,2.25) node[above,xshift=-2pt]{$-1$} to [out=270,in=150] (.5,1.5);
    \draw[very thick,<-] (1,2.25) node[above,xshift=2pt]{$+1$} to [out=270,in=30] (.5,1.5);
  \end{tikzpicture}  
\end{equation}
In words, the flow has to stay parallel, and not cross, at 2-labeled edges.
\end{definition}

\begin{lemma}
\label{lem:link2flow}Let $L$ be an oriented link diagram in $\R^2$, which we consider as a union of arcs that connect crossings and disjoint closed components. Next we color the connected components of $\R^2\setminus L$ with the checkerboard black and white coloring, starting with white on the unique unbounded component. On every oriented arc and every closed component of $L$, we now place the label $+1$ if there is a white component on the left, and the label $-1$ otherwise. This labelling descends to an admissible flow on every web $W$ in the cube of resolutions of $L$, compatible with the foams representing the components of the differential. 

Moreover, this assignment of a canonical flow to each link diagram in $\R^2$ is compatible with split disjoint union of link diagrams.
\end{lemma}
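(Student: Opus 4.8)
\textbf{Proof plan for Lemma~\ref{lem:link2flow}.}

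The plan is to verify the three assertions in order: (i) the $\pm 1$ labelling prescribed by the checkerboard colouring of $L$ satisfies the flow condition at every crossing resolution, for both the $0$- and $1$-resolutions; (ii) the resulting flow on each web $W$ in the cube of resolutions is admissible; and (iii) the zip and unzip foams that model the components of the differential carry a flow restricting to the prescribed flows on their source and target webs; and finally to check compatibility with split disjoint union. The key observation underpinning everything is local: away from crossings, $L$ is a union of arcs, and the black/white colouring is locally constant on the complement, so each arc inherits a well-defined label $\pm1$ according to whether white lies to its left. When a crossing is resolved (in either of the two planar ways), the four local strands are reconnected into two arcs, possibly through a $2$-labelled edge, and the colours of the four adjacent regions are inherited unchanged from the colouring of $L$ near that crossing.

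First I would treat step (i). Near a crossing of $L$, the four local regions alternate in colour around the crossing point, say white--black--white--black. Orienting the two strands of the crossing, one computes directly that at the oriented smoothing where the two strands are reconnected without creating a trivalent vertex (the ``$1$-labelled'' resolution appearing in $\bracor{\cdot}$), the two outgoing $1$-labelled arcs get labels determined by the two regions on their left, and because the regions alternate, these are forced to be opposite; at the other resolution, the two $1$-labelled edges merge into a $2$-labelled edge through two trivalent vertices, and one checks case by case (there are only finitely many local pictures, indexed by the two choices of crossing sign and the two colourings) that the incoming pair of $1$-labels at each vertex is $\{+1,-1\}$, exactly matching the four vertex types in the Definition of a flow. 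This is a finite local check; I would present it as a single figure with all cases. The reversal-of-orientation symmetry noted after \eqref{sl2Fig5Blanchet_1} is consistent here because reversing an arc's orientation swaps left and right, hence flips its label, and flips the roles of the two vertex types simultaneously.

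For step (ii), admissibility: a non-closed $2$-labelled edge in a web $W$ in the cube arises precisely at a crossing resolved to the ``merge-then-split'' picture, and its two ends are the two trivalent vertices of that local piece; by the computation in step (i) the $1$-labelled edges entering the bottom vertex carry labels $\{+1,-1\}$ in the \emph{same} cyclic order as those leaving the top vertex (the colours of the two side regions are unchanged along the $2$-labelled edge), which is exactly the ``flow stays parallel, does not cross'' condition \eqref{eqn:adm-res}. For step (iii), the zip and unzip foams: such a foam has a single $2$-labelled facet (a ``sheet'' interpolating between the two $1$-labelled edges and the $2$-labelled edge) together with $1$-labelled facets that are products (edges)$\times[0,1]$ away from the seam; one extends the flow by assigning to each product $1$-facet the constant label of the corresponding edge, and to the $2$-labelled facet the full set $\{+1,-1\}$, and then checks the flow condition along the single seam, which reduces to the same vertex-type check as in step (i). Finally, compatibility with split disjoint union is immediate: the checkerboard colouring of a split union, started from the common unbounded white region, restricts on each summand to the colouring started from that summand's unbounded region (no bounded region of one summand can enclose the other), so the labels agree componentwise.

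The main obstacle I anticipate is purely bookkeeping rather than conceptual: correctly enumerating the local pictures at a crossing --- keeping track of the two possible crossing signs, the two colourings compatible with the checkerboard rule, the two resolutions, and the induced orientations on the reconnected arcs --- and confirming that in \emph{every} case the induced $1$-labels are opposite at each trivalent vertex and parallel along each $2$-labelled edge. There is also a small global point to be careful about: one must confirm that the label on each arc of $L$ is well-defined, i.e. that the two regions flanking an arc always have opposite colours, which is exactly the defining property of the checkerboard colouring; and that this is preserved under resolution, which follows because resolving a crossing only locally re-routes arcs and never merges two regions of the same colour. Once the local figure is drawn, the verification is a routine inspection, and the global statements (admissibility, foam compatibility, disjoint-union compatibility) follow formally from the local one.
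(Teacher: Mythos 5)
Your proposal is correct and follows essentially the same route as the paper's proof: a local verification near each crossing (for both resolution types) that the checkerboard-induced labels satisfy the flow condition at trivalent vertices, that the flow is admissible at $2$-labelled edges because the side-region colours persist, that the flow extends over zip/unzip foams by taking products away from the seam, and that split disjoint union is compatible because the unbounded white region restricts correctly. The paper's version is terser, citing \eqref{eqn:adm-res} directly without spelling out the case analysis, but the content is the same.
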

\begin{proof}
At the site of every crossing, the webs $W$ contain either a parallel resolution or a 2-labelled edge. In the latter case, the labels are in the configurations of \eqref{eqn:adm-res}, depending on whether there is a white or black region to the left of this crossing in $L$. In the case of a parallel resolution, the situation is analogous. Thus, the labels assemble to flows which are admissible at 2-labelled edges and compatible with the zip and unzip foams realising the components of the differential in the cube of resolutions. 
  
  The statement about compatibility of the flow construction with split disjoint union follows from the fact that this operation embeds two white outside regions into a white outside region.
\end{proof}

\begin{remark}
More generally, using the same arguments as in Lemma~\ref{lem:link2flow} one can show that any foam between closed $\glnn{2}$ webs in $\R^2\times [0,1]$ admits a canonical flow.
\end{remark}

\begin{definition} \label{basis def} Let $W$ be a $\glnn{2}$ web with an admissible flow $f$. We define a basis $B(W,f)$ of
$\HOM_{\foam}(\emptyset, W)$ containing (signed) foams appearing as entries of
the inverse of the isomorphism obtained in Lemma~\ref{lem:existregion}. These foams are determined by the local
simplifications in Lemma~\ref{lem:regionsimpl}, where we only use clockwise
regions, and the isomorphisms realising \eqref{eq:circles}. To determine the
signs, note that the flow $f$ on $W$ extends uniquely to a flow on every basis foam,
which we again denote by $f$. We now equip the basis foams
with a minus sign if they contain an odd number of dots on facets where $f$ has value
$-1$. 
\end{definition}

\begin{example}\label{exa:circle} The bases for 1-labeled circles with flow $+1$ and $-1$ are:
  \[
    B(\bigcirc,+1)=\Big\{\,
    \begin{tikzpicture} [fill opacity=0.2, tinynodes,  decoration={markings, mark=at position 0.6 with {\arrow{>}};  }, scale=.4,anchorbase]
      \draw [fill=blue] (0,4) ellipse (1 and 0.5);
      \draw (-1,4) .. controls (-1,2) and (1,2) .. (1,4);
      \path [fill=blue, opacity=.3] (1,4) .. controls (1,3.34) and (-1,3.34) .. (-1,4) --
        (-1,4) .. controls (-1,2) and (1,2) .. (1,4);
         \node[opacity=1] at (0.2,3) {\tiny $+1$};
    \end{tikzpicture}  
    \,
    ,
    \,
    \begin{tikzpicture} [fill opacity=0.2,  decoration={markings, mark=at position 0.6 with {\arrow{>}};  }, scale=.4,anchorbase]
      \draw [fill=blue] (0,4) ellipse (1 and 0.5);
      \draw (-1,4) .. controls (-1,2) and (1,2) .. (1,4);
      \path [fill=blue, opacity=.3] (1,4) .. controls (1,3.34) and (-1,3.34) .. (-1,4) --
        (-1,4) .. controls (-1,2) and (1,2) .. (1,4);
       \node[opacity=1] at (0,4) {$\bullet$};
       \node[opacity=1] at (0.2,3) {\tiny $+1$};
    \end{tikzpicture}  
    \,
    \Big\}
    \quad,\quad
    B(\bigcirc,-1)=\Big\{\,
    \begin{tikzpicture} [fill opacity=0.2,  decoration={markings, mark=at position 0.6 with {\arrow{>}};  }, scale=.4,anchorbase]
      \draw [fill=blue] (0,4) ellipse (1 and 0.5);
      \draw (-1,4) .. controls (-1,2) and (1,2) .. (1,4);
      \path [fill=blue, opacity=.3] (1,4) .. controls (1,3.34) and (-1,3.34) .. (-1,4) --
        (-1,4) .. controls (-1,2) and (1,2) .. (1,4);
      %   \node[opacity=1] at (0,1) {$\bullet$};
      \node[opacity=1] at (0.2,3) {\tiny $-1$};
    \end{tikzpicture}  
    \,
    ,
    -
    \,
        \begin{tikzpicture} [fill opacity=0.2,  decoration={markings, mark=at position 0.6 with {\arrow{>}};  }, scale=.4,anchorbase]
      \draw [fill=blue] (0,4) ellipse (1 and 0.5);
      \draw (-1,4) .. controls (-1,2) and (1,2) .. (1,4);
      \path [fill=blue, opacity=.3] (1,4) .. controls (1,3.34) and (-1,3.34) .. (-1,4) --
        (-1,4) .. controls (-1,2) and (1,2) .. (1,4);
       \node[opacity=1] at (0,4) {$\bullet$};
       \node[opacity=1] at (0.2,3) {\tiny $-1$};
    \end{tikzpicture}  
    \,
    \Big\}
  \]
We will write $\oone$ and $\ox$ for the undotted and dotted cup foam, respectively.
Both bases above can then be written as $\{\oone, \epsilon \ox\}$.
\end{example}

\begin{example} For the theta web, there again exist two flows, whose associated bases are:
  \[
    B\Big(\!
      \begin{tikzpicture} [fill opacity=0.2, scale=.4,,tinynodes,anchorbase]
       \draw[very thick] (0,0) circle (1);
       \draw[double,directed=.65] (0,-1) to (0,1);
       \node[opacity=1] at (-1.55,0) {$+1$};
       \node[opacity=1] at (1.55,0) {$-1$};
      \end{tikzpicture}  \!
    \Big)
    =\Big\{\,
    \begin{tikzpicture} [fill opacity=0.2,  decoration={markings, mark=at position 0.6 with {\arrow{>}};  }, scale=.4,anchorbase]
      \draw [very thick, blue] (.8,3.7) .. controls (.75,2.2) and (-.75,2.2) .. (-.8,4.3);
      \fill [yellow,opacity=0.5] (.8,3.7) .. controls (.75,2.2) and (-.75,2.2) .. (-.8,4.3);
      \draw [fill=blue] (0,4) ellipse (1 and 0.5);
      \draw (-1,4) .. controls (-1,2) and (1,2) .. (1,4);
      \draw [double,<-] (-.8,4.3) to (.8,3.7);
      \path [fill=blue, opacity=.3] (1,4) .. controls (1,3.34) and (-1,3.34) .. (-1,4) --
        (-1,4) .. controls (-1,2) and (1,2) .. (1,4);
        \node[opacity=1] at (0.2,3) {\tiny $+1$};
    \end{tikzpicture}  
    \,
    ,
    \,
    \begin{tikzpicture} [fill opacity=0.2,  decoration={markings, mark=at position 0.6 with {\arrow{>}};  }, scale=.4,anchorbase]
      \draw [very thick, blue] (.8,3.7) .. controls (.75,2.2) and (-.75,2.2) .. (-.8,4.3);
      \fill [yellow,opacity=0.5] (.8,3.7) .. controls (.75,2.2) and (-.75,2.2) .. (-.8,4.3);
      \draw [fill=blue] (0,4) ellipse (1 and 0.5);
      \draw (-1,4) .. controls (-1,2) and (1,2) .. (1,4);
      \draw [double,<-] (-.8,4.3) to (.8,3.7);
      \path [fill=blue, opacity=.3] (1,4) .. controls (1,3.34) and (-1,3.34) .. (-1,4) --
        (-1,4) .. controls (-1,2) and (1,2) .. (1,4);
        \node[opacity=1] at (-.6,3.2) {$\bullet$};
        \node[opacity=1] at (0.2,3) {\tiny $+1$};
    \end{tikzpicture}  
    \,
    \Big\}
    \quad,\quad
    B\Big(\!
      \begin{tikzpicture} [fill opacity=0.2, scale=.4,,tinynodes,anchorbase]
       \draw[very thick] (0,0) circle (1);
       \draw[double,directed=.65] (0,-1) to (0,1);
       \node[opacity=1] at (-1.55,0) {$-1$};
       \node[opacity=1] at (1.55,0) {$+1$};
      \end{tikzpicture}  \!
    \Big)
    =\Big\{\,
    \begin{tikzpicture} [fill opacity=0.2,  decoration={markings, mark=at position 0.6 with {\arrow{>}};  }, scale=.4,anchorbase]
      \draw [very thick, blue] (.8,3.7) .. controls (.75,2.2) and (-.75,2.2) .. (-.8,4.3);
      \fill [yellow,opacity=0.5] (.8,3.7) .. controls (.75,2.2) and (-.75,2.2) .. (-.8,4.3);
      \draw [fill=blue] (0,4) ellipse (1 and 0.5);
      \draw (-1,4) .. controls (-1,2) and (1,2) .. (1,4);
      \draw [double,<-] (-.8,4.3) to (.8,3.7);
      \path [fill=blue, opacity=.3] (1,4) .. controls (1,3.34) and (-1,3.34) .. (-1,4) --
        (-1,4) .. controls (-1,2) and (1,2) .. (1,4);
        \node[opacity=1] at (0.2,3) {\tiny $-1$};
    \end{tikzpicture}  
    \,
    ,
    -
    \,
    \begin{tikzpicture} [fill opacity=0.2,  decoration={markings, mark=at position 0.6 with {\arrow{>}};  }, scale=.4,anchorbase]
      \draw [very thick, blue] (.8,3.7) .. controls (.75,2.2) and (-.75,2.2) .. (-.8,4.3);
      \fill [yellow,opacity=0.5] (.8,3.7) .. controls (.75,2.2) and (-.75,2.2) .. (-.8,4.3);
      \draw [fill=blue] (0,4) ellipse (1 and 0.5);
      \draw (-1,4) .. controls (-1,2) and (1,2) .. (1,4);
      \draw [double,<-] (-.8,4.3) to (.8,3.7);
      \path [fill=blue, opacity=.3] (1,4) .. controls (1,3.34) and (-1,3.34) .. (-1,4) --
        (-1,4) .. controls (-1,2) and (1,2) .. (1,4);
        \node[opacity=1] at (-.6,3.2) {$\bullet$};
        \node[opacity=1] at (0.2,3) {\tiny $-1$};
    \end{tikzpicture}  
    \,
    \Big\}
  \]
  Let us call the theta web with the first flow the \emph{left theta web} and
  the other one the \emph{right theta web}.
\end{example}

\begin{example} The unzip foam from the left theta web to two circles has all its
coefficients in the set $\{0,1\}$ in the above bases. This is seen using the neck-cutting relation (\ref{sl2closedfoam}) and the last relation in equation (\ref{sl2neckcutting_enh_2lab}). For the right theta web, it has
coefficients $\{0,-1\}$. 

The zip foam with target the theta web (with any flow) and
  source given by two circles with compatible flows has coefficients $\{0,1\}$
  in the above bases. 
\end{example}

\subsection{Action of foams on bases}
Here we study the action of elementary foams on the web bases $B(W,f)$ constructed in the previous section. To this end, we will need two auxiliary results.

\begin{proposition}(Preparing generalized neck-cutting)
\label{prop:preneckcut}
Let $F$ be a foam in $\R^2\times [0,1]$ whose underlying 1-labeled surface $c(F)$ has a compression disk $D$. Then there exist tubular neighbourhoods $D \subset U \subset V \subset \R^2 \times (0,1)$ and a foam $G$ in $\R^2\times [0,1]$ such that 
\begin{itemize}
    \item $G\cap (\R^2\setminus V)=F\cap (\R^2\setminus V)$
    \item $c(G)=c(F)$
    \item $c(F\cap V)\cong \partial D \times [0,1]$
    \item $G\cap U \cong \partial D \times [0,1]$
    \item $F=\pm G $ as morphisms in $\foam$
\end{itemize}
\end{proposition}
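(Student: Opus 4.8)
The plan is to localise the problem to a neighbourhood of the compression disk $D$ and then perform a sequence of isotopies and applications of Blanchet's local relations that do not touch the rest of the foam. First I would set up the tubular neighbourhoods: since $D$ is a compression disk for $c(F)$, meaning $\partial D\subset c(F)$ is an embedded essential curve bounding the properly embedded disk $D$ with interior disjoint from $c(F)$, I can choose $D\subset U\subset V\subset \R^2\times(0,1)$ so that $V$ is a standard ball, $\overline{V\setminus U}$ is a collar, and $c(F)\cap V$ is a regular neighbourhood in $c(F)$ of the curve $\partial D$, i.e. an annulus $\partial D\times[0,1]$ (this uses that $\partial D$ is embedded, so it has an annular neighbourhood in the surface $c(F)$). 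This already secures the third bullet. Note that a priori $F\cap V$ may contain $2$-labelled facets and seams along $\partial D$ and dots, so $F\cap V$ is not just this annulus times an interval; the content of the proposition is that we can replace it.

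The core step is to show that, rel $\partial(F\cap V)$ and up to a global sign, the foam $F\cap V$ can be replaced by the bare annulus $G\cap U\cong\partial D\times[0,1]$ with no dots, $2$-facets, or seams, while keeping $c(G)=c(F)$. Here I would argue as follows. Push all $2$-labelled facets and seams of $F$ that lie in $V$ towards the boundary $\partial V$ using the undocking isomorphisms of \eqref{eq:squares} and the circle-removal relation \eqref{eq:circles} in the foam-level (cobordism) incarnation; concretely, a $2$-labelled facet meeting the region near $\partial D$ can either be slid off through the collar $\overline{V\setminus U}$, or, if it closes up, it bounds and is removed at the cost of a sign via the yellow sphere relation in \eqref{sl2neckcutting_enh_2lab} (the $-1$) together with the enhanced neck-cutting relation; since $c$ ignores $2$-labelled facets, none of this changes $c(F)$. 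Likewise, any dots on $1$-labelled facets inside $U$ can be slid out of $U$ along the annulus using the dot-sliding relation \eqref{eq:dotslide} and its undecorated counterpart (dots slide freely on a $1$-labelled facet, picking up signs only when crossing a seam, which by the previous step no longer lie in $U$). After these moves the portion of the foam inside a slightly shrunk neighbourhood is an undecorated annulus $\partial D\times[0,1]$ of $1$-labelled facets, and all the discrepancy — extra facets, seams, dots, and the accumulated signs — has been pushed into the collar region $V\setminus U$, giving the first, second, fourth, and fifth bullets with the sign $\pm$ recording the product of the signs incurred.

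The main obstacle I anticipate is bookkeeping the interaction between the $2$-labelled facets/seams and the dots during the ``push to the boundary'' step: one must be careful that sliding a dot past a seam (relation \eqref{eq:dotslide}) and removing a closed $2$-labelled facet are done in a consistent order so that the resulting sign is well-defined, and that the isotopies genuinely have support in $V$ (in particular that $\partial D$ being essential in $c(F)$, not just in a facet, does not obstruct finding the annular neighbourhood). A clean way to organise this is to first isotope so that $c(F)\cap V=\partial D\times[0,1]$ exactly (a collar-type argument for surfaces), then observe that over this annulus the foam $F$ is built from the trivial $1$-labelled annulus by attaching $2$-labelled facets along seams parallel to $\partial D$ and placing dots; each such attachment is either separating, hence removable via a local relation, or can be isotoped off through the collar, and the relations \eqref{sl2closedfoam}, \eqref{sl2neckcutting_enh_2lab}, \eqref{eq:dotslide} provide exactly the signs. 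The rest is then a routine verification of the five bulleted properties.
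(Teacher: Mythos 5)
Your overall strategy — localise to a tubular neighbourhood of $D$, use the local relations to strip out $2$-labelled material and dots, and track the resulting signs — is the same as the paper's. But there is a gap in how you set up the local picture, and it is exactly the step the paper handles differently (and more precisely) by citing \cite[Lemma 3.6]{1806.03416}. The paper's key technical move is to choose the tubular neighbourhood $V\cong D'\times(0,1)$ so that, after a generic isotopy of $D$, the intersection $F\cap V$ has a \emph{product structure} $W\times(0,1)$ for a single $\glnn{2}$ web $W$ with $c(W)=\partial D$. The disk $D$ then intersects $F$ precisely in the web $W$ (a $1$-labelled circle $\partial D$ with some $2$-labelled edges attached), and the whole problem reduces to a web-level statement: $W$ can be traded, via the circle, saddle, and undocking relations from Lemma~\ref{lem:webisos}, for a web $W'$ with $c(W')=c(W)$ and $D\cap W'=\partial D$, at the cost of a global sign. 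Gluing back then produces $G$.

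Your description misses this and, as a consequence, mischaracterises the geometry. You assert that the $2$-labelled facets in $V$ are ``attached along seams parallel to $\partial D$,'' but in the product picture the seams inside $V$ are of the form $\{p\}\times(0,1)$ for trivalent vertices $p$ of $W$, i.e.\ they run \emph{transverse} to $\partial D$ and cross the disk $D$; that is precisely the case one needs to handle, and it is not addressed by ``sliding off through the collar'' or removing a closed bubble. Relatedly, your language ``can be isotoped off'' and ``slid off'' conflates genuine isotopies with applications of Blanchet's relations (undocking, $2$-labelled circle removal): these are morphism equalities, not isotopies, and the sign bookkeeping only becomes well-defined once you have the product structure and can perform the computation at the level of the cross-section web. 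The first part of your plan correctly identifies the annular neighbourhood $c(F)\cap V\cong\partial D\times[0,1]$ (the third bullet), the relevant relations \eqref{eq:squares}, \eqref{eq:circles}, \eqref{sl2neckcutting_enh_2lab}, \eqref{eq:dotslide}, and the right worry about support in $V$; what is missing is the observation that genericity of $D$ gives $F\cap V\cong W\times(0,1)$, which is what turns the ``pushing'' heuristic into an actual proof.
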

To paraphrase this: any foam, whose underlying 1-labeled surface has a neck, can be modified locally and up to sign into a new foam, which  itself has a 1-labeled neck.  
\begin{proof}
This was shown in the proof of \cite[Lemma 3.6]{1806.03416}, so we only give a sketch here. Assuming that $D$ intersects $F$ generically, we can find a slightly larger open disk $D'$ and a tubular neighbourhood $V\cong D' \times (0,1)$, such that $F\cap V\cong W\times (0,1)$ for some $\glnn{2}$ web $W$ with $c(W)=\partial D $. Using the 2-labeled circle, saddle, and undocking relations from \ref{lem:webisos}, one can see that the foam $W \times (0,1)$ equals up to a sign a foam $G'$ that factors through a web $W'$ with $\partial W'=\partial W$, $c(W')=c(W)$ and $D\cap W'=\partial D$. Detailed descriptions of this process appear in \cite[Lemmas 2.1 and 3.6]{1806.03416} and \cite[Lemma 64]{2018arXiv180309883Q}. The foam $G$ is now obtained by gluing $G'$ and $F\cap(\R^2\setminus V)$.
\end{proof}

\begin{proposition}
\label{prop:agree}
If $F$ and $G$ are (possibly dotted) Blanchet foams with identical underlying (dotted) surfaces $c(F)=c(G)$ and $\partial F = \partial G$, then $F=\pm G$.
\end{proposition}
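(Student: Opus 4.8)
The strategy is to reduce the equality $F=\pm G$ to a comparison of scalars obtained by evaluating against a common basis. Since $\partial F=\partial G$, the foams $F$ and $G$ both define morphisms $\emptyset\longrightarrow W$ in $\foam$, where $W$ is the (possibly empty) closed web along which their common boundary lives; if the boundary is nonempty one first closes both off in the same way (cap off all outgoing boundary by the same auxiliary foam), so it suffices to treat the case $\partial F=\partial G=\emptyset$, i.e. $F$ and $G$ are closed foams with $c(F)=c(G)$ as dotted surfaces. In that case both represent elements of $\Hom_{\foam}(\emptyset,\emptyset)=\Z$, so it is enough to show that the integer represented by $F$ equals $\pm$ the integer represented by $G$, and in fact that $|F|=|G|$ as integers.

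The key step is to use the generalized neck-cutting made available by Proposition \ref{prop:preneckcut} together with the basis technology of Section \ref{sec: Bases for web spaces}. Pick a flow on the common underlying surface $c(F)=c(G)$ (this exists canonically by the remark following Lemma \ref{lem:link2flow}, applied to closed foams in $\R^2\times[0,1]$). Now evaluate $F$ by the standard procedure: repeatedly apply neck-cutting \eqref{sl2closedfoam} (in its $2$-labeled enhanced form \eqref{sl2neckcutting_enh_2lab} as needed) along compression disks of $c(F)$, using Proposition \ref{prop:preneckcut} to convert each compression disk of $c(F)$ into an honest $1$-labeled neck up to sign, until the foam is a $\Z$-linear combination of cup–cap (and theta-type) basis foams of the webs appearing, whose values are then read off from the closed foam relations. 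The crucial observation is that every move in this reduction — neck-cutting, removing closed $1$- and $2$-labeled spheres, evaluating dotted spheres and theta foams, undocking and removing $2$-labeled circles — depends \emph{only} on $c(F)$ as a dotted surface, on the flow (which we fixed to be the same for both), and on the combinatorial type of the foam near its $2$-labeled facets and seams, all of which are identical for $F$ and $G$ by hypothesis. Hence the reduction applied to $G$ produces the same linear combination of the same basis foams, up to the overall signs introduced by the invocations of Proposition \ref{prop:preneckcut}, giving $F=\pm G$.

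To make this precise one argues by induction on the number of $2$-labeled facets of $F$ (equivalently of $G$), or on the genus of $c(F)$. In the base case $c(F)$ is a disjoint union of spheres and there are no $2$-labeled facets: then the closed relations in \eqref{sl2closedfoam} (a $1$-labeled sphere is $0$, a once-dotted $1$-labeled sphere is $1$, two dots kill it) determine the value purely from the dot pattern on $c(F)$, so $F=G$ on the nose. In the inductive step, if there is a $2$-labeled facet, choose a compression disk — one exists because $c(F)$ bounds in $\R^2\times(0,1)$ — apply Proposition \ref{prop:preneckcut} to both $F$ and $G$ (with the \emph{same} choice of disk $D$, which is legitimate since it is a property of $c(F)=c(G)$ only), obtaining $F=\pm G'$, $G=\pm G''$ with $c(G')=c(G'')=c(F)$ and a genuine $1$-labeled neck present; cutting that neck via \eqref{sl2closedfoam} rewrites each of $G'$, $G''$ as a sum of two foams with strictly simpler underlying surface (one additional dot distributed on either side), and the corresponding summands have identical underlying dotted surfaces and boundaries, so the inductive hypothesis applies summand by summand. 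Collecting signs gives $F=\pm G$.

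The main obstacle is bookkeeping of signs and the verification that the reduction procedure is genuinely insensitive to everything except $c(F)$: one must check that the auxiliary choices in Proposition \ref{prop:preneckcut} (the tubular neighbourhood, the identification $c(F\cap V)\cong\partial D\times[0,1]$, and the concrete sequence of web isomorphisms from Lemmas \ref{lem:webisos}–\ref{lem:regionsimpl} used to produce $G'$) can be made to depend only on $c(F)$ and not on how the $2$-labeled facets of $F$ sit inside it — and, dually, that the positions of those $2$-labeled facets, which \emph{are} the same for $F$ and $G$, do not interfere. This is exactly the content of the cited Lemmas 2.1 and 3.6 of \cite{1806.03416} and Lemma 64 of \cite{2018arXiv180309883Q}, so the proof amounts to organizing those statements into the inductive scheme above; no genuinely new geometric input is required.
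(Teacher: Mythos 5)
There is a critical gap in the reduction to closed foams. Capping off $F$ and $G$ with a \emph{single} auxiliary foam $H$ and verifying $H\circ F=\pm H\circ G$ in $\Hom_{\foam}(\emptyset,\emptyset)\cong\Z$ does not imply $F=\pm G$: the difference $F-\epsilon G$ could be nonzero yet annihilated by that particular $H$. What is actually required is to pair against \emph{every} element $H$ of a dual basis $B(W,f)^*\subset\HOM_{\foam}(W,\emptyset)$ and — crucially — to show that the resulting sign $\epsilon$ is \emph{independent of $H$}. That uniformity is the real content of the proof and it does not come for free. The paper's argument establishes it by first transporting all dots of $F$ and $G$ (and of $H$) to fixed base points chosen on the $1$-labeled facets of $c(F_W\circ F)=c(F_W\circ G)$, thereby isolating the $H$-dependent decoration $d_H$ from the $H$-independent sign correction, \emph{before} any neck-cutting begins. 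Your sketch never addresses why the global sign picked up during the reduction cannot vary with $H$.

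A second, independent problem: you assert that ``the combinatorial type of the foam near its $2$-labeled facets and seams \ldots are identical for $F$ and $G$ by hypothesis''. This is false. The hypothesis $c(F)=c(G)$ fixes only the underlying $1$-labeled dotted surface, and $\partial F=\partial G$ fixes only the boundary web; the $2$-labeled facets and the seam curves in the interior may differ arbitrarily between $F$ and $G$. This is exactly what makes the proposition nontrivial and is the ultimate source of the $\pm$ ambiguity. Relatedly, your proposed induction on the number of $2$-labeled facets is not well-founded: $F$ and $G$ need not have the same number of $2$-labeled facets, neck-cutting along a $1$-labeled compression disk does not in general decrease that count, and the mere presence of a $2$-labeled facet does not produce a compression disk of $c(F)$ (consider $c(F)$ a sphere). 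The underlying strategy — Proposition~\ref{prop:preneckcut} plus induction to cups and spheres — is the right one and matches the paper's, but the induction should be organised around the shape of $c(F)=c(G)$ (cutting all necks, removing all spheres, reducing to disks) rather than around the $2$-labeled data, with the $2$-labeled facets eliminated only at the final stage by undocking via \eqref{sl2neckcutting_enh_2lab} and \eqref{sl2Fig5Blanchet_1}.
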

\begin{proof}
This was proven in \cite[Proposition 2.9]{2019arXiv190312194B}. Here we sketch an alternative proof based on Proposition~\ref{prop:preneckcut}. First we may assume that $W=\partial F= \partial G \subset \R^2 \times \{1\}$, for otherwise we would bend the bottom boundary of $F$ and $G$ to the top. 

Fix an admissible flow $f$ on $W$ and consider the basis $B(W,f)\subset \HOM_{\foam}(\emptyset,W)$. By construction, all elements of $B(W,f)$ are given by decorating the 1-labeled components of a distinguished foam $F^W$ by certain signed dots. We will also consider a basis $B(W,f)^*$ of $\HOM_{\foam}(W,\emptyset)$ which is dual to $B(W,f)$ under the composition pairing valued in $\END_{\foam}(\emptyset) = \Z\langle \emptyset \rangle \cong \Z$. More specifically, we take $B(W,f)^*$ to consist of foams obtained by appropriately decorating a distinguished foam $F_W$ with signed dots ($F_W$ can be obtained by reflecting $F^W$ in $\R^2\times \{1\}$. 

Now we claim, there exists $\epsilon\in \{\pm 1\}$ such that $H \circ F = \epsilon (H \circ G)$ for any $H \in B(W,f)^*$. This would imply $F=\epsilon G$.

To prove the claim, we choose base points on 1-labeled facets of $F_W$, such that every non-closed
connected component of $c(F)=c(G)$
contains exactly one such base point up inclusion into $c(F_W\circ F)=c(F_W\circ G)$. The foams $H\circ F$ and $H \circ G$ are signed, dotted versions of $F_W\circ F$ and $F_W \circ G$ respectively, and to verify their equality modulo foams relations up to a global sign, we may assume that all dots coming from $H$ and from non-closed components of $c(F)=c(G)$ have been transported to the base points. This is possible, since the dots originating in $F$ and $G$ are moved to the base points at the expense of a global sign $\epsilon'$, irrespective of $H$.

We have now reduced the problem to a comparison of foams
\[ C\circ d_H \circ F' \quad \text{ and } \quad \epsilon' C\circ d_H \circ G'\]
where $C$ consists entirely of 1-labeled caps, $d_H$ is a signed, dotted identity foam on $\partial C$, with signs and dots depending on $H \in B(W,f)^*$, $F'$ and $G'$ are foams satisfying $c(F')=c(G')$ and containing dots only on closed components of $c(F')=c(G')$. In particular, we have eliminated the sign dependence on $H$ and are left with proving $F'=\pm G'$. 

Since $c(F')=c(G')$ we have the same necks in $F'$ and $G'$, which can be cut using Proposition~\ref{prop:preneckcut} and the neck-cutting relation from \eqref{sl2closedfoam} at the expense of a global sign. Similarly, every $S^2$ component in the underlying surfaces of the result can be removed by another application of \ref{prop:preneckcut} and the sphere relations from \eqref{sl2closedfoam}. See \cite[Lemma 3.9]{1806.03416} for a detailed description of this step. After these simplifications, we may assume that $F'=\epsilon_F \sum_i  d_i F''$ and $G'= \epsilon_G \sum_i d_i G''$ where $F''$ and $G''$ are undotted foams such that $c(F'')=c(G'')$ consists entirely of cups (disks), $d_i$ is a dotted identity foam on $\partial C$ and $\epsilon_F, \epsilon_G\in \{\pm 1\}$. Here we have used in a crucial way that the neck-cutting relation from \eqref{sl2closedfoam} is sign-coherent and always involves foam facets with the same flow label. Finally, we observe that $F''= \pm G''$ by undocking and eliminating all 2-labeled facets in both foams via the first two relations in \eqref{sl2neckcutting_enh_2lab} and the first relation in \eqref{sl2Fig5Blanchet_1}.
\end{proof}

Next, we compute the action of zips, unzips, and saddle maps between standard bases. Note that a saddle cobordism between link diagrams $L$ and $L'$ induces saddle foams between webs $W$ and $W'$ appearing in the respective cubes of resolutions, which are compatible with the canonical flows defined in Lemma~\ref{lem:link2flow}.

\begin{proposition}\label{prop:zipunzip} Let $W$ be a web with an admissible
flow $f$, and let $F\colon W\longrightarrow V$ denote a zip or an unzip foam  or a saddle to a web $V$,
which we equip with the induced flow, denoted $g$. Then the matrix of the linear
map 
\[F_*\colon \HOM_{\foam}(\emptyset, W) \longrightarrow \HOM_{\foam}(\emptyset, V),\qquad F_*(G) = F\circ G \]
with respect to the bases $B(W,f)$ and $B(V,g)$ has entries in either $\{0,1\}$ or $\{0,-1\}$. 
\end{proposition}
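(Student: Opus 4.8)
The plan is to rewrite the matrix coefficients of $F_{*}$ as evaluations of closed foams, to reduce to a short list of local models by far-commutativity, and to verify sign-coherence in each of these by a direct computation --- the point being that the flow-sign normalization of the bases is calibrated to cancel the signs produced by Blanchet's relations.

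First I would set up the coefficients as closed foam evaluations. As in the proof of Proposition~\ref{prop:agree}, equip $\HOM_{\foam}(V,\emptyset)$ with the basis $B(V,g)^{*}$ dual to $B(V,g)$ under the composition pairing valued in $\END_{\foam}(\emptyset)\cong\Z$. Then the coefficient of $\psi\in B(V,g)$ in $F\circ\phi$, for $\phi\in B(W,f)$, is the integer $\psi^{*}\circ F\circ\phi\in\Z$, where $\psi^{*}\in B(V,g)^{*}$ is dual to $\psi$, so it suffices to show that, as $\phi$ and $\psi$ range over the two bases, these integers all lie in $\{0,1\}$ or all lie in $\{0,-1\}$. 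Next I would record that in all three cases the underlying $1$-labeled surface $c(F)$ is a single saddle cobordism --- immediate for a saddle foam, and for a zip or an unzip a consequence of the description of these foams together with the fact that $c$ of the $2$-labeled resolution consists of two arcs joining the four endpoints oppositely to the parallel resolution --- whereas $c(\phi)$ and $c(\psi^{*})$ are disjoint unions of disks, since the simplification isomorphism of Lemma~\ref{lem:existregion} is assembled from undockings (products on the $1$-labeled part, by Lemma~\ref{lem:regionsimpl}) and from the circle removals \eqref{eq:circles} (caps and cups). Hence $c(\psi^{*}\circ F\circ\phi)$ is a closed surface built from disks and a single saddle, so each of its components is a $2$-sphere carrying the dots of $\phi$ and $\psi$, while the $2$-labeled facets of $F$, if present, close up into $2$-labeled spheres and disks. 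By Proposition~\ref{prop:agree} the value of $\psi^{*}\circ F\circ\phi$ is then determined up to sign by this dotted surface, and evaluating via \eqref{sl2closedfoam} and \eqref{sl2neckcutting_enh_2lab} gives $0$ unless each $1$-labeled sphere carries exactly one dot, in which case it is $\pm1$; in particular every coefficient lies in $\{0,\pm1\}$.

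Then I would reduce to local models. The foam $F$ is supported in a ball $B$, outside of which $W$ and $V$ coincide as webs with flow; using far-commutativity (as in the proof of Lemma~\ref{lem:existregion}) I would carry out the simplifications of $W$ and of $V$ away from $B$ first, which identifies $B(W,f)$ and $B(V,g)$ with products of a set of dot choices in a fixed neighbourhood of $B$ with a common set of dot choices coming from the simplification of the spectator part, under which $F_{*}=F_{*}^{\mathrm{loc}}\otimes\Id$. It therefore suffices to treat the finitely many local models --- a zip or an unzip at a crossing site, or a $1$-labeled saddle, with the flow in one of the configurations permitted by admissibility (at a $2$-labeled edge, those of \eqref{eqn:adm-res}). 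For each, I would compute $\psi^{*}\circ F\circ\phi$ directly, transporting all dots of $\phi$ and $\psi$ to standard positions by the dot-sliding relation \eqref{eq:dotslide} and reducing with the relations \eqref{sl2closedfoam}, \eqref{sl2neckcutting_enh_2lab}, and \eqref{sl2Fig5Blanchet_1}, and then check that the sign rule of Definition~\ref{basis def} --- a minus sign for an odd number of dots on facets of flow $-1$ --- is exactly what makes the resulting matrix sign-coherent. The worked examples with the theta-web and circle bases in Section~\ref{sec: Bases for web spaces} already exhibit both the $\{0,1\}$ and the $\{0,-1\}$ outcomes and serve as a guide here.

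The hard part will be this last sign bookkeeping: checking that the flow-sign normalization of the bases cancels the dot-slide signs and the intrinsic signs of Blanchet's relations (in particular the orientation-reversal sign in \eqref{sl2Fig5Blanchet_1}) uniformly over all pairs of basis elements and all admissible local configurations, and confirming that the spectator part of the web contributes only a single overall sign within each bidegree. The underlying web combinatorics --- enumerating the local models and their simplifications --- is routine by Lemmas~\ref{lem:webisos}--\ref{lem:existregion}.
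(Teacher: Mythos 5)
Your setup via dual bases and closed-foam evaluation is reasonable, and the observation that $c(F)$ is a saddle and $c(\phi)$, $c(\psi^{*})$ are unions of disks is correct. However, your route diverges from the paper's and has a genuine gap at the reduction step, and the part you flag as "the hard part" is in fact where the content of the proposition lives, so deferring it leaves the argument incomplete.

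The tensor factorization $F_{*}=F_{*}^{\mathrm{loc}}\otimes\Id$ is not available in the form you use it. The basis $B(W,f)$ is a set of dot decorations on the disks of $c(F^{W})$, one disk per circle of $c(W)$ after simplification, and these circles do not split into "circles in a neighbourhood of $B$" and "spectator circles": the saddle arc has its four endpoints on one or two circles of $c(W)$ that necessarily run through $B$ and continue outside it, so those circles are neither local nor spectator. Moreover the simplification algorithm of Lemma~\ref{lem:existregion} chooses regions of maximal winding number, which may be partly inside and partly outside $B$, and undocking 2-labelled edges is a non-local operation; far-commutativity guarantees independence of the \emph{order}, not that the process factors through a product of an inside-$B$ and an outside-$B$ part. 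Consequently your claim that $B(W,f)$ and $B(V,g)$ identify with products with a \emph{common} spectator factor is not justified, and the sign contribution of the "spectator part" is exactly the thing you would need to control but cannot isolate.

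The paper sidesteps both issues with a structural argument that you do not have. Every basis element is written as $H=d_{H}\circ F^{W}$ for a single distinguished undotted foam $F^{W}$ and a signed dotted identity $d_{H}$ whose sign is dictated by Definition~\ref{basis def}. The key step is that $F\circ d_{H}\circ F^{W}=d'_{H}\circ F\circ F^{W}$ where $d'_{H}$ is again a signed dotted identity on $V$ with signs dictated by the flow $g$: the dot-slide sign of \eqref{eq:dotslide} that may be picked up when pushing a dot through a seam of $F$ is exactly compensated by the change of flow label on the target facet, which flips the sign in the basis normalization. After that, the merge case is handled by Proposition~\ref{prop:agree} applied to $F\circ F^{W}$ and $F^{V}$ (giving one global $\epsilon$), and the split case by Proposition~\ref{prop:preneckcut} plus the sign-coherence of the neck-cutting relation in \eqref{sl2closedfoam}. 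The sign of each nonzero matrix entry is therefore an $H$-independent global sign, which is precisely sign-coherence, with no case analysis over local configurations and no tensor decomposition of the bases required. If you want to salvage your approach, you would need to replace the local/spectator factorization with this observation that the decoration $d_{H}$ commutes (with compensated sign) past $F$, at which point you are essentially reproducing the paper's proof.
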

This says that the action of $F_*$ is \emph{sign-coherent} with respect to the
standard bases.

\begin{proof} Recall that every basis element $H\in B(W,f)$ is a signed, dotted version of a distinguished foam $F^W\in\HOM_{\foam}(\emptyset,W)$ such that $c(F^W)$ is a collection of disks. Let us write $H=d_H\circ F^W$, where $d_H$ is a signed, dotted identity foam on $W$, with signs determined by the flow $f$. Then we have:
\[
F\circ H = F \circ d_H\circ F^W = d'_H\circ F\circ F^W
\]
for a signed, dotted identity foam on $V$ with signs determined by the flow $g$.

Suppose that $c(F)$ is a saddle that merges two circles in $c(W)$ (recall that $F$ itself may be a zip, an unzip, or a saddle). Then we have $c(F\circ F^W)=c(F^V)$ and $\partial(F\circ F^W)=\partial F^V$, and thus by Proposition~\ref{prop:agree}, $F\circ F^W=\epsilon\cdot F^V$ for some $\epsilon\in\{\pm 1\}$. Furthermore, if $d'_H\circ F\circ F^W= \epsilon\cdot d'_H\circ F^V$ is non-zero, then it agrees with an element of $B(V,g)$ up to the global sign $\epsilon$.

Now suppose that $c(F)$ is a saddle that splits two circles in $c(W)$. In this case, we can find a compression disk $D$ for $c(F\circ F^W)$ to which we apply Proposition~\ref{prop:preneckcut} and the neck-cutting relation from \eqref{sl2closedfoam}. The result will be a linear combination $\epsilon'( d_1\circ G^V+d_2\circ G^V)$ where $\epsilon'\in \{\pm 1\}$, $G^V$ is an undotted foam with $c(G^V)=c(F^V)$, and $d_1$ and $d_2$ are both identity foams on $V$ with a single dot placed on certain facets with the same flow label $\epsilon''\in \{\pm 1\}$. By Proposition~\ref{prop:agree} we conclude that $G^V=\epsilon''' F^V$ for some $\epsilon'''\in \{\pm 1\}$, and thus $F\circ F_W = \epsilon'\epsilon''' (d_1\circ F^V+d_2\circ F^V)$. Finally, for other basis elements we compute $d'_H\circ F\circ F^W = \epsilon'\epsilon''' (d'_H \circ d_1\circ F^V+d'_H \circ d_2\circ F^V)$. Whenever these summands are nonzero, they agree with a basis element from $B(V,g)$ up to the global sign $\epsilon'\epsilon''\epsilon'''$.
\end{proof}

Finally, we consider the effect of the cup and cap cobordisms.

\begin{lemma} \label{cup cap}
Let $W$ be a web with an admissible
flow $f$, containing an innermost 1-labelled circle C with flow label $\epsilon$. Let $F$ denote the cap foam that removes this component, resulting in a web $V:=W\setminus C$ that we equip with the induced flow $g$. Further, let $F^{!} \colon V \longrightarrow W$ denote the corresponding cup foam in the opposite direction.    
Then the matrix of the linear
map 
\[F_*\colon \HOM_{\foam}(\emptyset, W) \longrightarrow \HOM_{\foam}(\emptyset, V),\qquad F_*(G) = F\circ G \]
with respect to the bases $B(W,f)$ and $B(V,g)$ has entries in $\{0,\epsilon\}$ and the matrix of the linear
map 
\[F^{!}_*\colon \HOM_{\foam}(\emptyset, V) \longrightarrow \HOM_{\foam}(\emptyset, W),\qquad F^!_*(G) = F^!\circ G \]
with respect to the bases $B(V,g)$ and $B(W,f)$ has entries in $\{0,1\}$.
\end{lemma}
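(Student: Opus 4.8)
The plan is to reduce everything to the structure of the basis $B(W,f)$ as described in Definition~\ref{basis def} and to the simple algebra of the cup/cap foams attached to an innermost $1$-labelled circle. First I would analyze the cap map $F_*$. Since $C$ is an innermost $1$-labelled circle in $W$, the distinguished foam $F^W$ restricts over a neighbourhood of $C$ to the cup foam creating $C$, so that $F^W = F^{C,!}\circ F^V$ up to reordering the far-commuting pieces, where $F^{C,!}$ denotes the cup creating $C$ and $F^V$ is the distinguished foam for the subweb $V$. Writing a general basis element as $H = d_H\circ F^W$ with $d_H$ a signed dotted identity foam on $W$ (signs determined by $f$), I compute
\[
F_*(H) = F\circ d_H\circ F^W = d_H'\circ F\circ F^{C,!}\circ F^V,
\]
where $d_H'$ is the identity foam on $V$ obtained by forgetting the component over $C$. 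The composite $F\circ F^{C,!}$ is a closed $1$-labelled sphere, possibly carrying dots coming from $d_H$ restricted to the $C$-component. By the sphere relations in \eqref{sl2closedfoam}, this sphere evaluates to $0$ if it is undotted, and to $1$ if it carries exactly one dot (and to $0$ with two or more dots). The only subtlety is the sign: the dotted basis foam with a dot on the $C$-facet carries the sign $\epsilon$ precisely when the flow label of $C$ is $\epsilon=-1$ (by Definition~\ref{basis def}), and the undotted one is unsigned; hence evaluating the sphere produces a coefficient in $\{0,\epsilon\}$, and whenever the result is nonzero the remaining foam $d_H'\circ F^V$ is, up to that sign, the corresponding basis element of $B(V,g)$ (using that the flow $g$ is the restriction of $f$, so the sign conventions match). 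This gives the first claim.

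For the cup map $F^!_*$, I would argue dually. Given a basis element $H' = d_{H'}\circ F^V \in B(V,f|_V)$, we have
\[
F^!_*(H') = F^!\circ d_{H'}\circ F^V = d_{H'}\circ F^!\circ F^V,
\]
and $F^!\circ F^V$ is, by construction, the distinguished foam $F^W$ itself: gluing the cup creating $C$ onto $F^V$ produces exactly the disk collection $c(F^W)$ with the correct boundary, so by Proposition~\ref{prop:agree} it equals $F^W$ up to a global sign, which one checks is $+1$ by tracking the (undotted) cup-creating facet — this facet receives no dot, so no sign is introduced regardless of whether $\epsilon$ is $+1$ or $-1$. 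Therefore $F^!_*(H') = d_{H'}\circ F^W$, which is precisely the basis element of $B(W,f)$ indexed by the same dot/sign data as $H'$ (the new $C$-facet being undotted). Hence every column of the matrix of $F^!_*$ is a single standard basis vector with coefficient $1$, giving entries in $\{0,1\}$.

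The main obstacle I expect is not the topology — the foams involved are as simple as possible — but the careful bookkeeping of signs, and in particular verifying that the two sign conventions in play (the flow-induced signs on basis foams from Definition~\ref{basis def}, and the global sign ambiguity in Proposition~\ref{prop:agree}) interact so that the cap picks up exactly $\epsilon$ and the cup picks up exactly $+1$. The asymmetry between the two maps is the crucial point: it comes from the fact that the cap foam must be \emph{capped off} against a dot (forcing a sign $\epsilon$ when the $C$-facet carries flow $-1$), whereas the cup foam simply \emph{adjoins} an undotted disk to the distinguished foam $F^V$ (introducing no new dotted facet, hence no sign). I would make this precise by choosing the base points on the $1$-labelled facets as in the proof of Proposition~\ref{prop:agree} so that all dot-transport signs are absorbed uniformly, reducing the verification to the single local computation of a dotted $1$-labelled sphere via \eqref{sl2closedfoam}.
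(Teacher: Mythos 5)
Your proof is correct and is the natural expansion of the paper's one-line proof, which simply cites Example~\ref{exa:circle} and the sphere relations in \eqref{sl2closedfoam}: since $C$ is innermost, the distinguished foam $F^W$ splits off a cup over $C$, and the whole computation reduces to the evaluation of a (possibly dotted) $1$-labelled sphere together with the flow-dependent sign convention of Definition~\ref{basis def}. One small simplification: you do not need Proposition~\ref{prop:agree} to conclude $F^!\circ F^V = F^W$ — this equality holds on the nose by the construction in Lemma~\ref{lem:existregion}, since simplifying $W$ can begin by removing the innermost circle $C$ and these steps far-commute.
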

\begin{proof}
Immediate from Example~\ref{exa:circle} and the relations \eqref{sl2closedfoam}.
\end{proof}

\begin{remark} In this section we chose the additional local data of flows on webs and foams to record and track the sign discrepancies between Bar-Natan cobordisms and Blanchet foams, which are essential for the functoriality of Khovanov homology and link cobordisms. The use of flows is motivated by the Murakami--Ohtsuki--Yamada state-sum for evaluation of closed webs \cite{MR1659228} and its analog for foams that is due to Robert--Wagner \cite{1702.04140}. Other, essentially equivalent ways of encoding these sign discrepancies include skein-algebra like superposition operations with 2-labelled curves \cite{1806.03416} and shadings \cite{2019arXiv190312194B}.
\end{remark}

\begin{remark} \label{deformations} The results of this section also hold for the deformed foam categories mentioned in Remark~\ref{rem:def}. When deforming the relation $\bullet^2=0$ in \eqref{sl2closedfoam} to $\bullet^2 -h\bullet +t=0$, the signed dots $-\bullet$ in basis elements should be replaced by the linear combination $\circ:=h-\bullet$, see \cite{2019arXiv190312194B}.  
\end{remark}

\section{The stable homotopy type} \label{stable homotopy sec}

\subsection{The signed Burnside category and the construction of ${\mathcal X}_{\rm or}(L)$} \label{signed Burnside sec}

A stable homotopy refinement of Khovanov homology  was originally introduced by Lipshitz and Sarkar in \cite{MR3230817}. Their construction was reformulated by Lawson, Lipshitz and Sarkar \cite{MR4153651} using the Burnside category\footnote{See also \cite{MR3465966}}.
In this section we formulate the stable homotopy type ${\mathcal X}_{\rm or}(L)$ in Theorem \ref{main theorem:stable homotopy type}. Our construction relies on the {\em signed Burnside category}, introduced by Sarkar, Scaduto and Stoffregen in \cite{MR4078823} in their work on the odd Khovanov homotopy type.

Recall from Section \ref{Blanchet review sec} the setup of the Blanchet theory: given an oriented link diagram $L$ with $n$ crossings, there is a cube of resolutions $\bracor{L}$ and the chain complex of graded abelian groups $\CKhor(L)$.
More precisely, to each vertex $v$ of the cube $\{0,1\}^n$ there is an associated planar web $W(v)$, and each edge corresponds to a zip/unzip foam. Lemma \ref{lem:link2flow} gives a canonical flow $f_{\mathrm{can}}$ on every web $W(v)$ in the cube of resolutions of $L$, compatible with the foams representing the components of the differential. A basis
 $B(v)$ for the web space at each vertex $v$ is constructed in Definition \ref{basis def}. Finally, Proposition \ref{prop:zipunzip} determines the coefficients of the edge differential with respect to the chosen bases.

Several variations of the stable homotopy type construction using different versions of the Burnside category have appeared in the literature; therefore we will outline steps of the construction rather than giving a detailed exposition. We will refer to statements in \cite{MR4153651, MR4078823} and emphasize details of our setting that are different from these references.

Consider the cube category ${\underline{2}}^n$ whose objects are elements of $\{0, 1\}^n$ and with a unique morphism ${\phi}_{a,b}$
iff $a \geq  b$ (here the partial ordering on the objects is induced by the ordering of the coordinates). The construction is based on the lift $F_{\rm or}$

\begin{equation} \label{eq:lift}
\begin{tikzcd}
& \mathscr{B}_{\sigma} \arrow[d] \\
{\underline{2}}^n \arrow[r, "\scriptsize\mathfrak{F}^{\rm op}_{\rm or}" '] \arrow[ur, dashrightarrow, "\scriptsize F_{\rm or}"]
& {\mathbb Z}{\rm -}{\rm Mod} 
\end{tikzcd}
\end{equation}
to the signed Burnside category $\mathscr{B}_{\sigma}$ (the definition is recalled below) of the cube of resolutions, viewed as a diagram of abelian groups, $\mathfrak{F}^{\rm op}_{\rm or}\! : {\underline{2}}^n \longrightarrow {\mathbb Z}{\rm -}{\rm Mod}$. 
Here the subscript stands for ``oriented'', and the superscipt reflects the fact that the arrows in ${\underline{2}}^n$ are dual to those in the usual cube of resolutions. 
The value of $\mathfrak{F}^{\rm op}_{\rm or}$ on vertices and edges of the cube is defined by the representable functor $\HOM_{\foam}(\emptyset,-)$ as in equation (\ref{Hom eq}).
The construction of the Khovanov stable homotopy type in \cite{MR4153651} relied on the Burnside category $\mathscr{B}$, a weak 2-category whose objects are finite sets, $1$-morphisms are finite correspondences, and $2$-morphisms are maps of correspondences. We use the signed Burnside category $\mathscr{B}_{\sigma}$, introduced in \cite{MR4078823}, where $1$-morphisms are {\em signed} correspondences discussed in more detail below. This extension is needed to accommodate the signs appearing in the differential in the oriented model for Khovanov homology (see Proposition \ref{prop:zipunzip}).
The vertical map in the diagram (\ref{eq:lift}) is the forgetful functor sending a finite set to the abelian group generated by it.

There is a forgetful functor $\mathscr{B}_{\sigma}\longrightarrow \mathscr{B}$, and the functor $F_{\rm or}$ constructed in this paper is a lift of $F_{Kh}\! : {\underline 2}^n \longrightarrow {\mathscr B}$ of \cite{MR4153651}. (A different lift, $F_o$ in \cite{MR4078823}, corresponds to the odd Khovanov homology.) As in prior constructions, the functor $F_{\rm or}$ decomposes as $\coprod_j F^j_{\rm or}$ along quantum gradings.

Given finite sets $X, Y$, a {\em signed correspondence} \cite[Section 3.2]{MR4078823} is a tuple $(A,s^{}_{A}, t^{}_{A}, {\sigma}^{}_{A} )$ where $A$ is a finite set, the source and target maps $s^{}_{A}\! : A\longrightarrow X, t^{}_{A}\! : A\longrightarrow Y$ are maps of sets, and ${\sigma}^{}_{A}\! : A\longrightarrow \{\pm 1\} $ is a ``sign map''. Signed correspondences are conveniently encoded as diagrams 
\begin{equation} \label{eq:signed correspondence}
\begin{tikzcd}[row sep = scriptsize, column sep = scriptsize]
& \{\pm 1\} &  \\
&  A  \arrow[dl, "s^{}_{A}"'] \arrow[u, "{\sigma}^{}_{A}"] \arrow[dr, "t^{}_{A}"] & \\
X & & Y
\end{tikzcd}
\end{equation}

The following lemma is a useful tool for constructing functors to the signed Burnside category.

\begin{lemma}\emph{(\cite[Lemma 4.4]{MR4153651}, \cite[Lemma 3.2]{MR4078823})}\label{lem:functor}
The following data satisfying conditions (\ref{item 1}), (\ref{item 2}) can be extended to a strictly unitary lax $2$-functor $F:{\underline 2}^n \longrightarrow {\mathscr{B}}_{\sigma}$, which is unique up to natural isomorphism:

A finite set $F(u)$ for each vertex $u\in {\underline 2}^n$, a signed correspondence $F(\varphi_{u,v})$ from $F(u)$ to $F(v)$ for each pair of vertices $u,v\in {\underline 2}^n$ with $u\geq_1 v$, and a $2$-morphism
$F_{u,v,v',w} : F(\varphi_{v,w}) \circ F(\varphi_{u,v}) \longrightarrow F(\varphi_{v',w}) \circ F(\varphi_{u,v'})$
for each $2$-dimensional face of ${\underline 2}^n$ with vertices $u,v,v',w$ satisfying $u\geq_1 v, v' \geq_1 w$.

\begin{enumerate}
\item \label{item 1} $F_{u,v,v',w}^{-1} = F_{u,v',v,w}$
\item \label{item 2} For every $3$-dimensional sub-cube of ${\underline 2}^n$, the hexagon of Figure \ref{hex figure} commutes.
\begin{figure}[ht]\centering
\begin{subfigure}[b]{0.3\textwidth}\[
\begin{tikzcd}[row sep = small, column sep = small]
& v'  \ar{rr} \ar{dd}  & &  w  \ar{dd}  \\
u \ar[crossing over]{rr} \ar{dd} \ar{ur} & & v \ar{ur} \\
& w'  \ar{rr}  & &  z \\
v'' \ar{rr} \ar{ur} && w'' \ar{ur} \ar[from=uu,crossing over]
\end{tikzcd} \]
\end{subfigure}\hskip5em
\begin{subfigure}[b]{0.5\textwidth}
    \[ \begin{tikzcd}[row sep = small, column sep = small, outer sep = 2pt]
     & \circ\ar[rr, "F_{v',w,w',z} \times \rm{id}"] &  & \circ \ar[ddr, "\rm{id} \times F_{u,v',v'',w'}"] &  \\
     &  &  &  &  \\
    \circ \ar[uur, "\emph{\text{id}} \times F_{u,v,v',w}"] \ar[ddr, "F_{v,w,w'',z} \times \rm{id}"'] &  &  &  & \circ \\
     &  &  &  &  \\
     & \circ \ar[rr, "\rm{id} \times F_{u,v,v'',w''}"'] &  & \circ \ar[uur, "F_{v'',w'',w',z}\times \rm{id}"']&  
    \end{tikzcd}
    \]
    \end{subfigure}
    \caption{}
    \label{hex figure}
\end{figure}
\end{enumerate}
\noindent
\end{lemma}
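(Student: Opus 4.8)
The plan is to follow the proof of the unsigned statement \cite[Lemma 4.4]{MR4153651} essentially verbatim, upgrading it to the signed setting exactly as in \cite[Lemma 3.2]{MR4078823}. Since $\underline{2}^n$ has a unique morphism $\varphi_{u,v}$ whenever $u\geq v$, producing a strictly unitary lax $2$-functor $F$ amounts to assigning a signed correspondence to every comparable pair $u\geq v$ together with lax structure $2$-morphisms $F(\varphi_{v,w})\circ F(\varphi_{u,v})\Rightarrow F(\varphi_{u,w})$, subject to unitality and to the associativity coherence. The hypotheses prescribe $F$ only on vertices, on edges ($u\geq_1 v$), and on $2$-dimensional faces, so the first task is to extend these data to longer morphisms and then to verify that all coherence constraints hold.

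For comparable $u\geq v$ differing in $k$ coordinates I would choose a saturated chain $u=u_0>u_1>\cdots>u_k=v$ and set $F(\varphi_{u,v}):=F(\varphi_{u_{k-1},u_k})\circ\cdots\circ F(\varphi_{u_0,u_1})$, the composite being taken in $\mathscr{B}_{\sigma}$: its underlying correspondence is the iterated fibre product of the edge correspondences and its sign map is the pointwise product of their sign maps. Any two saturated chains with the same endpoints are linked by a sequence of ``square moves'' that transpose two consecutive coordinate decrements, and each square move is covered by one of the prescribed $2$-morphisms $F_{u,v,v',w}$, composed with the canonical associator and unitor isomorphisms of $\mathscr{B}_{\sigma}$ (which merely reassociate iterated fibre products and carry the trivial sign). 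Concatenating these yields a comparison $2$-isomorphism between the composites attached to any two chains; the lax structure $2$-morphism $F(\varphi_{v,w})\circ F(\varphi_{u,v})\Rightarrow F(\varphi_{u,w})$ is the particular instance obtained by refining $\varphi_{u,w}$ through the vertex $v$. Strict unitality is automatic, since identity morphisms of $\underline{2}^n$ are sent to identity signed correspondences.

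The crux is that the comparison $2$-isomorphism between two chains must be independent of the chosen sequence of square moves: this is exactly what makes $F$ well defined on higher morphisms and makes the lax structure associative. The groupoid of saturated chains under square moves is presented by relations supported on $2$- and $3$-dimensional subcubes — reversal of a single square move, and the hexagonal relation among the six chains of a $3$-cube — while square moves in disjoint coordinate pairs commute automatically. Hypotheses (\ref{item 1}) ($F_{u,v,v',w}^{-1}=F_{u,v',v,w}$) and (\ref{item 2}) (commutativity of the hexagon of Figure~\ref{hex figure}) say precisely that these relations are respected, so the comparison is well defined and $F$ is coherent; uniqueness up to natural isomorphism follows because any two extensions agree on vertices and edges and their values on longer morphisms differ only through the choice of saturated chains, which the $2$-morphisms $F_{u,v,v',w}$ assemble into the required natural isomorphism. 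The only place the signed refinement enters is the passage from $\mathscr{B}$ to $\mathscr{B}_{\sigma}$: composition in $\mathscr{B}_{\sigma}$ multiplies sign maps, $2$-morphisms are sign-preserving maps of correspondences, and the coherence isomorphisms of $\mathscr{B}_{\sigma}$ lie over those of $\mathscr{B}$, so $\mathscr{B}_{\sigma}$ has the same formal structure as $\mathscr{B}$ and the argument of \cite[Lemma 4.4]{MR4153651} applies without change. I expect the only step requiring genuine care to be the bookkeeping identifying the concatenated $2$-morphisms with the generating relations of the chain groupoid, and this is carried out identically to the cited references.
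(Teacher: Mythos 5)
Your proof reconstructs the argument from \cite[Lemma~4.4]{MR4153651} and its signed extension \cite[Lemma~3.2]{MR4078823}, which are exactly the sources the paper invokes without reproving the lemma; your extension-along-saturated-chains strategy, the role of conditions (\ref{item 1}) and (\ref{item 2}) in making the comparison $2$-isomorphisms chain-independent, and the observation that $\mathscr{B}_{\sigma}$ inherits the formal structure of $\mathscr{B}$ all match the cited proofs. This is the same approach as the paper's (by citation), correctly carried out.
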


The hexagon relation is a consistency check for compositions of $2$-morphisms along the faces of $3$-dimensional sub-cubes of $\underline{2}^n$, cf. \cite[Figure 24]{2001.00077}. 

Using Lemma \ref{lem:functor}, the following data gives the oriented Khovanov Burnside functor $F_{\rm or}\! : {\underline 2}^n \longrightarrow {\mathscr B}_{\sigma}$. $F(u)$ is the finite set $B(u)$, defined as the collection of generators $B(W(u),f_{\mathrm{can}})$ for the web space $\HOM_{\foam}(\emptyset, W(u))$ at the vertex $u$, constructed in Definition \ref{basis def}. For each edge $u\geq_1 v$ in $\underline{2}^n$, and $y \in F_{\rm or}(v)$, consider the value of the edge differential applied to $y$:
$\sum_{x\in F_{\rm{or}}(u) } \epsilon_{x,y} x$.
By Proposition \ref{prop:zipunzip}, each coefficient $\epsilon_{x,y}$ is an element of $\{ -1,0,1\}$.
Define
\[
F_{\rm or}(\phi_{u,v})=\{ (y,x) \in F_{\rm or}(v) \times F_{\rm or}(u) |
\epsilon_{x,y}=\pm 1 \},
\]
where the source and target maps are the
projections, and
the sign on elements of $F_{\rm or}(\phi_{u,v})$ is given by
$\epsilon_{x,y}$.

\subsection{Ladybug configurations and the completion of the construciton} \label{ladybug sec}
The Khovanov homotopy type of \cite{MR3230817, MR4153651} carries more information than  the Khovanov chain complex, and the additional ingredient in the construction is the analysis of {\em ladybug configurations} \cite[Section 5.4]{MR3230817}.
In the setting of the Khovanov homotopy type there is a unique choice of the $2$-morphisms $F_{u,v,v',w}$ for square faces in Lemma \ref{lem:functor}, except for the ladybug configuration case. 

As indicated in Figure \ref{ladybug}, such a configuration consists of a simple closed curve and two surgery arcs with endpoints linked on the circle, so that either surgery is a split into two circles, and the result of both surgeries is again a single component. In the Khovanov chain complex this corresponds to the generator $1$ being sent by both edge maps (comultiplications) to $1\otimes X+X\otimes 1$; then the multiplication map takes the result to $2X$. Defining the $2$-morphism $F_{u,v,v',w}$ in this case amounts to establishing a bijection between the summands $1\otimes X, X\otimes 1$ in the two $2$-component resolutions; this has to be done consistently so that the hexagon relation in Figure \ref{hex figure} holds for each $3$-dimensional cube. This is equivalent to specifying a bijection between the components of these two resolutions; one such correspondence -- the ``right pair'' in the terminology of \cite{MR3230817} -- is shown in Figure \ref{ladybug}. (The labels $a,b$ chosen for illustration here differ from those in \cite{MR3230817} to avoid confusion with the labels $1,2$ that are  used in the $\glnn{2}$ web context.) The analysis of $3$-dimensional cube configurations in \cite[Section 5.5]{MR3230817}, reformulated in the language of the Burnside category, is used to prove the hexagon relation in \cite{MR4153651} when a consistent (right or left) choice is made for all ladybug configurations.

\begin{figure}[ht] 
\centering
\includegraphics[height=2.5cm]{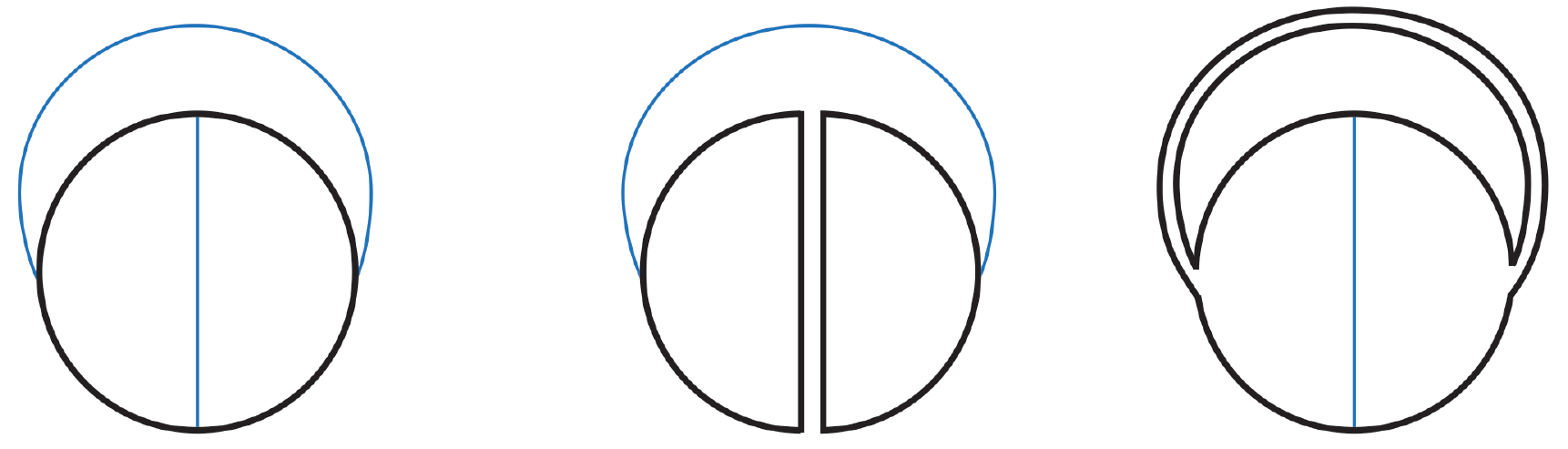}
{\small
\put(-152,10){$a$}
\put(-108,37){$b$}
\put(-65,10){$a$}
\put(-24,37){$b$}}
\caption{Ladybug configuration}
\label{ladybug}
\end{figure}

In the Blanchet setting
there is still a unique choice of $2$-morphisms $F_{u,v,v',w}$, except for the web version of ladybug configurations.
Such a configuration is a web $W$ with two zip/unzip moves so that the combinatorics of $c(W)$ (the $1$-labeled curves of $W$, in the terminology of Section \ref{Blanchet sec}) matches that in Figure \ref{ladybug}.
(Note that arbitrary $2$-labeled edges may be part of the configuration, and are not pictured.)

Proposition \ref{prop:zipunzip} gives the sign coherence of maps in a ladybug configuration. The value of the composition of the two corresponding edge maps in the cube of resolutions on the undotted generator $x$ is $\pm 2$ times the dotted generator $z$. (Here dotted and undotted generators are understood in the context of Definition \ref{basis def}.) The associated composition of signed correspondences has two elements in the set $s_A^{-1}(x)\cap t_A^{-1}(z)$ of the same sign, using the notation from (\ref{eq:signed correspondence}). As in the setting of the Khovanov homotopy type, a consistent choice (say, right pair) is made, ensuring that the hexagon in Lemma \ref{lem:functor} commutes. More specifically, the analysis in \cite[Proposition 6.1]{MR3611723} shows that the composition of the six $2$-morphisms corresponding to traversing the six faces of a $3$-dimensional cube induces the identity map on the $2$-element correspondence in the Burnside category. In our setting the correspondences have signs, which are determined by the signs of the Blanchet differential. However the combinatorics of the ladybug configurations is based on the $1$-labeled curves; moreover the square faces in the Blanchet complex commute, with no sign corrections. Thus the composition of the six $2$-morphisms is again the identity, verifying the hexagon relation.

With a functor to the signed Burnside category given by Lemma \ref{lem:functor}, the rest of the construction -- little boxes refinement (with reflections to accommodate signs) and spacial realization -- follows as in \cite{MR4078823}.

\begin{remark} \label{coefficients cancellations}
The coefficients of the edge maps with respect to the chosen generators are elements of $\{0, \pm 1\}$; in this respect our context is similar to that of \cite{MR4078823}, necessitating the use of signed Burnside categories. 
However, unlike the setting of the odd Khovanov homology in \cite{MR4078823}, the $2$-dimensional faces of the cube of resolutions in our context commute (with no sign correction required).

Note also that similarly to the odd Khovanov homology context, the differentials in the Blanchet theory have signs but no {\em cancellations} (where a generator is sent by an edge map to a linear combination of generators which then cancel when acted on by another edge map). 
The presence of cancellations leads to a substantially more involved structure of the moduli spaces in the framed flow category, cf. \cite[Section 4]{2011.11234}.
\end{remark}

\begin{remark}
It is pointed out in
\cite[Remark 4.22]{MR4153651}
that incorporating the Bar-Natan and Lee deformations in the construction of the Khovanov homotopy type presents a problem.
The results of Section \ref{sec: Bases for web spaces} hold for the deformed foam categories, see Remark \ref{deformations}, however  working in this setting does not appear to help in constructing a stable homotopy refinement of the deformed theories. The problem with the Lee deformation for $\glnn{2}$ foams is identical to that in \cite[Figure 4.1]{MR4153651}. For the Bar-Natan deformation, the complication is due to the presence of cancellations (discussed in the preceding remark). 
There is a bit of flexibility in choosing the location of dots in the generators of the Blanchet theory; moving them across $2$-labeled sheets results in a sign change, as shown in equation (\ref{eq:dotslide}). Nevertheless, it seems unlikely that there is a consistent choice of generators eliminating cancellations.
\end{remark}

\subsection{Reidemeister moves}
\label{sec:Rmoves}
In this section, we prove the homotopy invariance of ${\mathcal X}_{\rm or}(L)$ under Reidemeister moves.
The key facts we need to know about Blanchet's version of Khovanov homology, to lift its Reidemeister invariance properties to the level of the stable homotopy type ${\mathcal X}_{\rm or}$, are collected in the following proposition.

\begin{figure}[ht]
    \centering
\begin{tikzpicture}[scale=.3,rotate=90,anchorbase]
\draw [very thick] (0,0) to (0,1) to [out=90,in=0] (-.5,1.5) to [out=180,in=90] (-1,1);
\draw [white,line width=0.15cm] (-1,1) to [out=270,in=180] (-.5,.5) to [out=0,in=270] (0,1)to (0,2);
\draw [very thick]
(-1,1) to [out=270,in=180] (-.5,.5) to [out=0,in=270] (0,1)to (0,2.25);\end{tikzpicture}
$\leftrightarrow$
\begin{tikzpicture}[scale=.3,rotate=90,anchorbase]
\draw [very thick] (0,0) to (0,2.25);
\end{tikzpicture}
, \quad
\begin{tikzpicture}[scale=.3,rotate=90,anchorbase]
\draw [white,line width=0.15cm] (1,0) to [out=90,in=270](0,1.5)to [out=90,in=270]  (1,3);
\draw [very thick] (1,0) to [out=90,in=270](0,1.5)to [out=90,in=270]  (1,3);
\draw [white,line width=.15cm] (0,0) to [out=90,in=270](1,1.5)to [out=90,in=270]  (0,3);
\draw [very thick] (0,0) to [out=90,in=270](1,1.5)to [out=90,in=270]  (0,3);
\end{tikzpicture}
$\leftrightarrow$
\begin{tikzpicture}[scale=.3,rotate=90,anchorbase]
\draw [very thick] (0,0) to (0,3);
\draw [very thick] (1,0) to (1,3);
\end{tikzpicture}
, \quad
\begin{tikzpicture}[scale=.3,rotate=90,anchorbase]
\draw [white,line width=0.15cm] (2,0) to [out=90,in=270] (0,3) to [out=90,in=270] (2,6);
\draw [very thick] (2,0) to [out=90,in=270] (0,3)to [out=90,in=270] (2,6);
\draw [white,line width=0.15cm] (1,0) to [out=90,in=270](0,1.5) to [out=90,in=270] (2,4.5) to [out=90,in=270] (1,6);
\draw [very thick] (1,0) to [out=90,in=270](0,1.5)to [out=90,in=270]  (2,4.5) to [out=90,in=270] (1,6);
\draw [white,line width=0.15cm] (0,0) to [out=90,in=270] (2,3) to [out=90,in=270] (0,6);
\draw [very thick] (0,0) to [out=90,in=270] (2,3) to [out=90,in=270] (0,6);
\end{tikzpicture}
$\leftrightarrow$
\begin{tikzpicture}[scale=.3,rotate=90,anchorbase]
\draw [very thick] (2,0) to (2,6);
\draw [very thick] (1,0) to (1,6);
\draw [very thick] (0,0) to (0,6);
\end{tikzpicture}
    \caption{Reidemeister moves}
    \label{fig:Rmoves}
\end{figure}
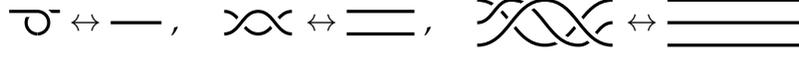

\begin{proposition} \label{Reid prop}
Let $X$ denote one of the Reidemeister moves from Figure~\ref{fig:Rmoves}, between two oriented link diagrams $D_1$ and $D_2$, where $D_1$ has more crossings than $D_2$. Let $r_X\colon \CKhor(D_1)\longrightarrow \CKhor(D_2)$ denote the associated chain map. Then $r_X$ is a deformation retract which factors as 
\[r_X = \phi \circ r_k\circ \cdots \circ  r_1,\] 
where $k\in \N$ and 
\begin{itemize}
    \item each $r_i$ for $1\leq i\leq k$ is a deformation retract induced by ``Gaussian elimination'' of an acyclic sub- or quotient            complex that is spanned by a pair of standard basis elements of $\CKhor(D_1)$ if $i=1$, respectively by the images of such a pair under $r_{i-1}\circ\cdots \circ r_1$ if $i>1$.
    \item there exists a bijection $\sigma$ between the images of standard basis elements $b$ of $\CKhor(D_1)$, which are not cancelled in this process, and the standard basis elements of $\CKhor(D_2)$, such that the foams $b$ and $\sigma(b)$ (Definition \ref{basis def}) are isotopic upon forgetting 2-labelled facets and overall minus signs.
     \item $\phi$ is an isomorphism of chain complexes, sending the image of standard basis elements $b$ in $r_k\circ\cdots \circ r_1( \CKhor(D_1))$ to $\pm \sigma(b)$ in $\CKhor(D_2)$.
\end{itemize}
\end{proposition}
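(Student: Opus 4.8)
The plan is to analyze each of the three Reidemeister moves separately, following the strategy of Bar-Natan's ``divide and conquer'' approach to Khovanov homology invariance, but executed at the level of the Blanchet cube of resolutions with its standard basis. For each move $X$, the chain complex $\CKhor(D_1)$ is built from the cube of resolutions of $D_1$; the extra crossings (one for R1, two for R2, three for R3) contribute subcubes that, upon passing to the web spaces $\HOM_{\foam}(\emptyset,-)$ and using the web isomorphisms of Lemmas~\ref{lem:webisos}, \ref{lem:regionsimpl}, and \ref{lem:existregion}, contain acyclic pieces spanned by pairs of standard basis foams. I would invoke Gaussian elimination (as in Bar-Natan \cite{MR2174270}) iteratively: each $r_i$ cancels one such pair. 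The key point to check is that at each stage the sub- or quotient complex being cancelled is genuinely spanned by \emph{standard basis elements} (for $i=1$) or their images under the previous retractions (for $i>1$), rather than arbitrary linear combinations; this is where the explicit description of the bases from Definition~\ref{basis def} and the sign-coherence from Proposition~\ref{prop:zipunzip} and Lemma~\ref{cup cap} are used, since they guarantee that the relevant components of the differential have coefficients in $\{0,\pm1\}$ and that the isomorphism components (zips, unzips, caps) act as $\pm$ permutations on basis foams.

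Concretely, for R1 I would resolve the extra crossing: one resolution gives back (a web isotopic to) $D_2$'s diagram, the other produces a small circle or theta-web configuration that can be simplified via Lemma~\ref{lem:regionsimpl}, yielding an acyclic complex (a dotted/undotted pair connected by an isomorphism) to cancel, leaving a single copy of the $D_2$ cube; here $k=1$ (or $2$, depending on orientation and the shift conventions). For R2 I would resolve the two extra crossings into the four subcube vertices; the $(0,1)$ and $(1,0)$ (or the appropriate pair) resolutions are related by an isomorphism which gets Gaussian-eliminated, and then one further elimination collapses the remaining configuration onto the $D_2$ cube; so $k=2$. For R3 I would use the standard fact that after the R2-type simplifications on both sides the two complexes become manifestly isomorphic (the ``categorified Kauffman trick''), which amounts to a bounded sequence of eliminations, so $k$ is again a small explicit integer. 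In every case the composite of the eliminations and the final identification is the chain map $r_X$ induced by the Blanchet-theoretic Reidemeister equivalence, which is known to be a homotopy equivalence; since all the $r_i$ are deformation retracts, so is $r_X$.

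For the second and third bullet points, I would track the basis foams through the eliminations. A basis element $b$ of $\CKhor(D_1)$ that survives all the $r_i$ corresponds, after discarding the cancelled subcubes, to a basis element supported on a surviving vertex web, and the surviving vertex webs are in canonical bijection with the vertex webs of $D_2$'s cube via the web isotopies used in the simplification. Because $c$ (deletion of 2-labelled facets) sends these web isotopies to isotopies of 1-labelled curves, and because Definition~\ref{basis def} builds each basis foam by attaching signed dots to a distinguished foam $F^W$ whose $c(F^W)$ is a union of disks, the image $c(b)$ is carried to $c(\sigma(b))$ by an isotopy of underlying dotted surfaces; the only discrepancy is a global sign, exactly as claimed. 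The map $\phi$ is then defined on the surviving basis elements by $b\mapsto\pm\sigma(b)$ with the signs fixed so that it intertwines the differentials, and one checks it is a chain isomorphism using Proposition~\ref{prop:agree} (identical underlying surfaces force agreement up to sign) together with sign-coherence of the Blanchet differential.

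The main obstacle I anticipate is \emph{not} the homotopy equivalence itself (that follows from the known Reidemeister invariance of the Blanchet complex in $K^b(\foam)$) but rather verifying that the Gaussian eliminations can be arranged to cancel pairs of \emph{standard basis} elements at every stage, and that the bookkeeping of signs (flow-determined dot signs, the minus sign in the 2-labelled relations \eqref{sl2neckcutting_enh_2lab} and \eqref{sl2Fig5Blanchet_1}, and the orientation-reversal sign) is consistent throughout. In particular one must confirm that the flow $f_{\mathrm{can}}$ of Lemma~\ref{lem:link2flow} on $D_1$ restricts, under the web simplifications, to the canonical flow on $D_2$, so that the bijection $\sigma$ is compatible with the chosen rescalings of the circle-removal isomorphisms. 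This is a careful but essentially local computation, done once for each of the three moves (and their orientation variants), using the explicit bases from the Examples in Section~\ref{sec: Bases for web spaces} and the action formulas of Proposition~\ref{prop:zipunzip}, Lemma~\ref{cup cap}, and the dot-slide relation \eqref{eq:dotslide}.
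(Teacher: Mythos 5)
Your proposal is correct in outline but takes a more computational route than the paper: you propose to run Bar-Natan's simplification explicitly for each of R1, R2, R3, tracking the standard basis foams through the web-simplification lemmas and verifying sign coherence move by move. The paper instead makes a single reduction: Blanchet foams specialize to Bar-Natan cobordisms over $\mathbb{F}_2$, and since the components of the $\CKhor$ differential have coefficients in $\{-1,0,1\}$, a component is invertible over $\Z$ exactly when its $\mathbb{F}_2$-specialization is. This lets the paper borrow the existence of the cancelling pair of generators at each stage from the known analysis of \cite{MR3230817} and \cite{MR2174270} without any fresh local computation, and likewise it yields the bijection $\sigma$ for free once the ranks match (the surviving generators are forced to match those over $\mathbb{F}_2$). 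The isomorphism $\phi$ is then read off from Blanchet's own local Reidemeister model rather than being constructed by hand. Your approach would buy a fully explicit, self-contained derivation (and would verify directly the flow-compatibility issue you correctly flag), but at the cost of tedious case checking across all orientations; the paper's $\mathbb{F}_2$-lifting trick avoids that entirely. One thing to be careful about in your version: you would need to check that the canonical flow on $D_1$ does induce the canonical flow on $D_2$ after simplification, which is not completely automatic from the web-isotopy lemmas; the paper's argument sidesteps this because $\phi$ is pinned down by Blanchet's chain map rather than by a flow-matching argument.
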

\begin{proof}
This follows from the discussion of Reidemeister moves in \cite{MR3230817} (building on \cite{MR2174270}) combined with the fact that Blanchet foams reduce to Bar-Natan cobordisms under the specialisation of coeffiecients to $\mathbb{F}_2$, and the desired Gaussian eliminations can be carried out over $\Z$ whenever they are possible over $\mathbb{F}_2$. 

More specifically, to construct $r_1$, we are looking for a pair of generators that form an acyclic sub- or quotient complex. The differentials in the chain complexes $\CKhor$ send standard basis elements to linear combinations of standard basis elements with coefficients from the set $\{-1,0,1\}$. In particular, a component of the differential is invertible if and only if its specialisation to $\mathbb{F}_2$ coefficients is invertible. Under this specialisation, foams agree with Bar-Natan cobordisms, and thus the discussion from \cite{MR3230817} guarantees the existence of a suitable pair of generators, and thus the existence of $r_1$. (This pair of generators is local, it exists and has a uniform description whenever the Reidemeister move of type $X$ is applied within a link diagram.) 

Now we observe that Gaussian elimination just cancels this pair of generators. Because the pair formed an acyclic sub- or quotient complex, the first intermediate complex $C_1$, i.e. the co-domain of $r_1$, consists of all remaining generators with unchanged differentials between them. In particular, $C_1$ inherits (not all) standard basis elements and a differential with coefficients from the set $\{-1,0,1\}$. Now we iterate this procedure. 

The final result $r_k\circ \cdots \circ  r_1 ( \CKhor(D_1))$ is reached, when the total rank of the chain groups equals that of $\CKhor(D_2)$. In that case, comparison with Bar-Natan cobordisms via $\mathbb{F}_2$ coefficients establishes the existence of the bijection $\sigma$. The isomorphism $\phi$ is then determined by (a single relevant component of) the local model for the Reidemeister chain map $r_X$ as defined by Blanchet.
\end{proof}

The proof that Reidemeister moves induce homotopy equivalences of ${\mathcal X}_{\rm or}(L)$ is analogous to the proof of  \cite[Theorem 1]{MR3611723} and \cite[Section 5.3]{MR4078823}.
The deformation retraction $r_X$ in Proposition \ref{Reid prop} 
gives a natural transformation $\eta \colon {\underline{2}}^1\times {\underline{2}}^n \longrightarrow \mathscr{B}_{\sigma}$ with the sign map of $F(\varphi_{1,0}\times {\rm Id}_v)$ determined by the isomorphism $\phi$ in Proposition \ref{Reid prop}. As in the references above, the natural transformation $\eta$ is a stable equivalence of functors, giving rise to a homotopy equivalence of spectra associated to link diagrams related by Reidemeister moves.

\subsection{Maps induced by cobordisms} 
The remaining work concerns establishing naturality of the stable homotopy type ${\mathcal X}_{\rm or}$ with respect to link cobordisms. As mentioned in the introduction, 
\cite{MR3189434} defined maps on the Khovanov homotopy type induced by a link cobordism represented as a sequence of Reidemeister moves and Morse moves. The well-definedness of this induced map has not yet been verified, but it is conjectured to hold. The construction of the cobordism maps in \cite[Section 3.3]{MR3189434} is given in the context of framed flow categories; its analogue in the (signed) Burnside category framework is discussed in \cite[Section 5.5]{MR4078823}. The result in our setting reads as follows.

\begin{lemma} \label{cob lemma}
An oriented link cobordism $C$ from $L$ to $L'$, presented as a sequence of Reidemeister and Morse moves, induces a map of spectra ${\mathcal X}_{\rm or}(L')\longrightarrow {\mathcal X}_{\rm or}(L)$. The induced map on cohomology is the map ${\rm Kh}_{\rm or}(C)\! : \Khor(L)\longrightarrow \Khor(L')$ in Theorem \ref{Blanchet thm}.
\end{lemma}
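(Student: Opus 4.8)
The plan is to realize $C$ one elementary cobordism at a time, following the templates of \cite[Section 3.3]{MR3189434} and \cite[Section 5.5]{MR4078823} and using the sign-coherence results of Section~\ref{sec: Bases for web spaces} to stay inside the signed Burnside framework of Section~\ref{signed Burnside sec}. First I would present $C$ as a movie $L = D_0, D_1, \dots, D_m = L'$ in which consecutive diagrams are related by a single Reidemeister move or an elementary Morse move (birth, death, or saddle). It then suffices to produce, for each step, a map of spectra between the two associated copies of ${\mathcal X}_{\rm or}$, in the direction dictated by the movie, inducing on cohomology Blanchet's elementary chain map; composing these over all steps yields ${\mathcal X}_{\rm or}(L') \longrightarrow {\mathcal X}_{\rm or}(L)$, and on cohomology one recovers ${\rm Kh}_{\rm or}(C)$ because, by Theorem~\ref{Blanchet thm}, this map is by definition the composite of exactly those elementary chain maps. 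Quantum gradings are shifted appropriately (by the Euler characteristics of the elementary cobordisms), exactly as for Blanchet's maps, and the spectra decompose accordingly.

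For a Reidemeister step I would simply invoke Section~\ref{sec:Rmoves}, which already produces a homotopy equivalence of spectra realizing the chain homotopy equivalence of Proposition~\ref{Reid prop}; one uses it, or its homotopy inverse, to match the orientation of the movie. For a Morse step I would build the map using Lemma~\ref{lem:functor}. A birth $D_i \rightsquigarrow D_i \sqcup \bigcirc$ or a death $D_i \sqcup \bigcirc \rightsquigarrow D_i$ leaves the cube ${\underline 2}^n$ unchanged, because the added circle carries no crossings; at each vertex $v$ the two webs differ by a disjoint innermost $1$-labelled circle, and the cup foam $F^!$, respectively cap foam $F$, of Lemma~\ref{cup cap} acts on the standard bases with entries in $\{0,1\}$, respectively $\{0,\epsilon\}$ for the flow label $\epsilon$ of that circle, which is determined by the local checkerboard colouring and hence is the same at every vertex. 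A saddle $D_i \rightsquigarrow D_{i+1}$ likewise fixes ${\underline 2}^n$, and at each vertex the induced saddle foam $W(v)\longrightarrow W'(v)$ is sign-coherent on the standard bases by Proposition~\ref{prop:zipunzip}. In all three cases the Morse foam is supported in a disk disjoint from the crossing regions, so by far-commutativity of foams it commutes strictly with every zip/unzip foam; the resulting signed correspondences thus assemble, with these identity $2$-morphisms, into a strictly unitary lax $2$-functor ${\underline 2}^1 \times {\underline 2}^n \longrightarrow {\mathscr B}_\sigma$. The only $2$-morphisms that are not forced are at ladybug-type squares inside $\{0\}\times{\underline 2}^n$ and $\{1\}\times{\underline 2}^n$, where I would make the same consistent (right-pair) choice as in Section~\ref{ladybug sec}; the hexagon relations of Lemma~\ref{lem:functor} for the $3$-cubes ${\underline 2}^1 \times(\text{$2$-face})$ then hold by the argument given there, since the ladybug combinatorics depends only on the $1$-labelled curves and is unaffected by $2$-labelled facets, by the signs on the correspondences, and by the disjoint Morse foam. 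Feeding this lax $2$-functor through the little-boxes refinement and spatial realization of \cite{MR4078823} (with the reflections that encode the signs) produces the desired map of spectra, and by the matrices computed in Lemma~\ref{cup cap} and Proposition~\ref{prop:zipunzip} its restriction to cohomology is the cup/cap/saddle chain map of Blanchet's theory.

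I expect the main obstacle to be the verification that the Morse-cobordism $2$-morphisms are compatible with the ladybug choices already fixed in building ${\mathcal X}_{\rm or}$, together with the bookkeeping needed to run the little-boxes and realization steps simultaneously over the enlarged cube ${\underline 2}^1 \times {\underline 2}^n$. This is precisely the work carried out in \cite[Section 5.5]{MR4078823} and \cite[Section 3.3]{MR3189434}, and it should transfer to our setting essentially verbatim, because Section~\ref{sec: Bases for web spaces} already supplies the two facts that make it go through: all the relevant foam maps are sign-coherent, and the $2$-dimensional faces of the cube of resolutions commute with no sign correction.
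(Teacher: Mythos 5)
Your overall strategy — decompose $C$ into elementary moves, handle Reidemeister moves via Section~\ref{sec:Rmoves}, and build a natural transformation ${\underline 2}^1\times{\underline 2}^n\longrightarrow\mathscr{B}_\sigma$ for each Morse move using the sign-coherence results of Section~\ref{sec: Bases for web spaces} — matches the paper's proof. The birth/death cases are handled correctly: the new circle is crossing-free, cannot participate in a ladybug, and Lemma~\ref{cup cap} supplies the signs.

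There is, however, a genuine gap in the saddle case. You assert that because the Morse foam is supported away from the crossings and hence commutes with each zip/unzip foam, the $2$-morphisms on squares mixing the saddle edge with a crossing edge are forced to be identities, and that the only non-forced $2$-morphisms lie in the subcubes $\{0\}\times{\underline 2}^n$ and $\{1\}\times{\underline 2}^n$. This is false: a ladybug configuration can have the saddle arc as one of its two surgery arcs (the two arcs link on a common circle, and the saddle arc need not be near any crossing for that to happen). In such a square the two composites of correspondences are fiber products over \emph{different} intermediate vertex sets, with a two-element fiber, so there is no ``identity'' bijection and the choice is genuinely not forced — even though the underlying foams are literally equal. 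The paper resolves this by treating the full $(n+1)$-dimensional cube as if it were a cube of resolutions of a link diagram with one extra crossing (the saddle), and then applying the ladybug analysis of Section~\ref{ladybug sec} uniformly, noting explicitly that ``saddles and zip/unzip foams are treated equally in ladybug analysis'' because ladybug combinatorics depends only on the $1$-labelled curves. You should replace your ``identity $2$-morphism'' claim for the saddle direction with this uniform ladybug treatment; once you do so, your argument becomes essentially the paper's.
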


\begin{proof}
Following the approach of \cite{MR4078823}, we associate to elementary cobordisms natural transformations of oriented Burnside functors. This is a lift to the signed Burnside category of the map of functors $F_{Kh}$ of \cite{MR4153651}.

First consider a saddle cobordism between two $n$-crossing link diagrams, $L\longrightarrow L'$.
For the Khovanov homotopy type \cite{MR3189434} the natural transformation 
${\underline{2}}^1\times {\underline{2}}^n \longrightarrow \mathscr{B}$ is given by the functor $F_{Kh}$ associated to the link diagram $L''$ with an extra crossing, corresponding to the saddle. In the Blanchet theory the edge differentials are zip/unzip foams (see Section \ref{Blanchet review sec}) rather than saddles. 

Consider the $(n+1)$-dimensional cubical diagram of webs and foams, corresponding to the cone on the saddle map. Two $n$-dimensional sub-cubes are given by the link diagrams $L, L'$, and the additional edge direction corresponds to the saddle cobordism.
While this is not a cube associated to a link diagram, it has properties analogous to one. 
Indeed, it follows from Proposition \ref{prop:zipunzip} that this cube does not have cancellations, cf. Remark \ref{coefficients cancellations}. Moreover, the definition of ladybug configurations in Section \ref{ladybug sec} is based on $1$-labelled curves of a web, so saddles and zip/unzip foams are treated equally in ladybug analysis.
Therefore the functor ${\underline{2}}^1\times {\underline{2}}^n \longrightarrow \mathscr{B}_{\sigma}$ associated to a saddle cobordism may be defined as in Sections \ref{signed Burnside sec}, \ref{ladybug sec}. 

The natural transformation assigned to a cup (birth) is defined by correspondences which label the new trivial circle with the undotted cup generator; the sign of this correspondence is $+1$. For the cap (death) the correspondences involve the trivial circle $C$ labeled by the dotted cup generator. In this case the sign of the correspondence is $\epsilon$, determined by the flow on $C$, see Lemma \ref{cup cap}.
\end{proof}

\subsection{Relation to the original construction of the Khovanov homotopy type} \label{sec: Bl Khov}
In this section we show that ${\mathcal X}_{\rm or}(L)$ is homotopy equivalent to ${\mathcal X}(L)$ of \cite{MR3230817}. We start by summarizing the relevant fact relating the two homology theories.

\begin{lemma} \label{lem: Bl and Khov}
Let $L$ be an oriented link diagram. There is a chain map \[\Phi\colon\CKhor(L)\longrightarrow \mathrm{CKh}(L)\] inducing an isomorphism on homology.  On each chain group $\Phi$ is a diagonal matrix with diagonal entries $\pm 1$ with respect to the basis in Definition \ref{basis def} for $\CKhor(L)$ and the canonical Khovanov basis for $\mathrm{CKh}(L)$.
\end{lemma}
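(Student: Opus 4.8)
The plan is to construct $\Phi$ componentwise over the cube of resolutions, exploiting the fact that the specialisation of Blanchet foams to $\mathbb{F}_2$-coefficients recovers Bar-Natan cobordisms (via the functor $c$), while the flow data introduced in Section~\ref{sec: Bases for web spaces} pins down the signs over $\Z$. At each vertex $v$ of the cube $\{0,1\}^n$, the web $W(v)$ has a canonical flow $f_{\mathrm{can}}$ from Lemma~\ref{lem:link2flow}, and Lemma~\ref{lem:existregion} gives a canonical simplification of $W(v)$ to a direct sum of shifts of the empty web; the basis $B(v) = B(W(v),f_{\mathrm{can}})$ of Definition~\ref{basis def} is obtained by decorating the corresponding distinguished foam $F^{W(v)}$ with signed dots. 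On the Khovanov side, the Bar-Natan complex $\mathrm{CKh}(L)$ has at each vertex the canonical basis labelling the circles of $c(W(v))$ by $1$ or $X$. Since $c(W(v))$ is precisely the collection of $1$-labelled curves of $W(v)$, and the simplification foams of Lemma~\ref{lem:regionsimpl} satisfy $c(F_n) = c(W_n)\times[0,1]$, there is a tautological bijection between $B(v)$ and the canonical Khovanov generators at $v$: an undotted facet capping off a circle corresponds to $1$, a dotted one to $X$. I would define $\Phi$ on the chain group at $v$ to be the diagonal matrix sending each basis foam $b \in B(v)$ to $\pm$ (the corresponding Khovanov generator), where the sign is exactly the sign already attached to $b$ in Definition~\ref{basis def} (i.e. $-1$ precisely when $b$ carries an odd number of dots on facets with flow value $-1$).

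The main content is then to verify that this collection of diagonal maps assembles into a chain map, i.e. that it commutes with the differentials. Both differentials are built from edge maps: zip/unzip foams on the Blanchet side and the multiplication/comultiplication (together with identity) maps on the Bar-Natan side. After reducing mod $2$, the functor $c$ identifies the Blanchet edge foams with the Bar-Natan edge cobordisms, so the two differentials agree up to signs; the issue is purely one of signs. Here I would invoke Proposition~\ref{prop:zipunzip} and Lemma~\ref{cup cap}: the Blanchet edge map $F_*$ between $B(W,f)$ and $B(V,g)$ is sign-coherent, with all nonzero entries in $\{0,1\}$ or all in $\{0,-1\}$, and the flow $g$ on $V$ is the one induced from $f$ on $W$. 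The key observation is that the sign conventions built into $B(v)$ (dots on $-1$-flow facets carry a minus) are designed exactly so that, after conjugating the Blanchet edge map by $\Phi$, one recovers the Bar-Natan edge map with its standard $\{0,1\}$ coefficients — the flow-determined signs on the two ends of each edge cancel against the global sign coming from sign-coherence. This is essentially a local computation for each type of edge foam (zip, unzip, identity on a non-interacting circle), carried out in the bases exhibited in Example~\ref{exa:circle} and the theta-web examples; it reduces to checking the merge and split of $1$-labelled circles, which is precisely the neck-cutting relation~\eqref{sl2closedfoam} against the Frobenius structure $\Delta(1) = 1\otimes X + X\otimes 1$, $\Delta(X) = X\otimes X$, $m(X\otimes X)=0$, etc.

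Once $\Phi$ is a chain map, the statement that it induces an isomorphism on homology is immediate: $\Phi$ is a diagonal matrix with entries $\pm 1$ on every chain group, hence an isomorphism of chain complexes, a fortiori a quasi-isomorphism. (One could alternatively deduce surjectivity/injectivity on homology from the mod-$2$ identification plus a universal-coefficients or rank argument, but the diagonal-$\pm1$ description makes this unnecessary.) The main obstacle I anticipate is bookkeeping the signs coherently across all edges simultaneously — in particular making sure that the local sign assignments at the two endpoints of each edge are compatible with the \emph{global} choices of distinguished simplification foams $F^{W(v)}$ (which depend on the clockwise-simplification algorithm of Lemma~\ref{lem:existregion}), so that the conjugated edge maps land exactly on the standard Khovanov edge maps and not merely up to an edge-dependent sign. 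This is where the compatibility of the canonical flow with the zip/unzip foams (Lemma~\ref{lem:link2flow}) and the uniqueness clause in Lemma~\ref{lem:existregion} do the real work: they guarantee there is no residual sign ambiguity that could obstruct $\Phi$ from being a chain map.
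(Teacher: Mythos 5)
The paper does not actually carry out the construction you sketch: its proof is a citation to Proposition~4.3 and Theorem~4.6 of Beliakova--Hogancamp--Putyra--Wehrli (arXiv:1903.12194), and the remark that ``a concrete description of $\Phi$ is also given by the results of our Section~\ref{sec: Bases for web spaces}'' is a pointer, not a proof. Your proposal, by contrast, attempts a self-contained construction with an \emph{explicit} sign assignment, namely the diagonal entry at a basis element $b\in B(v)$ equals the sign that $b$ already carries in Definition~\ref{basis def}. Equivalently, your $\Phi$ is the identity on the \emph{unsigned} cup foams $F_i\mapsto g_i$. This is a genuinely different (and more ambitious) route.

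The gap is that this specific sign choice does not give a chain map, and the assertion that the flow-determined signs ``cancel against the global sign coming from sign-coherence'' is not justified — indeed it seems false. The signs in Definition~\ref{basis def} were engineered to make edge maps \emph{sign-coherent} (all entries in $\{0,1\}$ or all in $\{0,-1\}$, Proposition~\ref{prop:zipunzip}), not to make them land exactly in $\{0,1\}$. The paper's own theta-web example is a counterexample to coherent cancellation: the unzip from the right theta web has coefficients $\{0,-1\}$ in the signed bases while the Bar-Natan comultiplication has $\{0,1\}$, so after applying your $\Phi$ (which in those bases is diagonal with entries $\epsilon_i\in\{\pm1\}$ inherited from the dots-on-$-1$-facets rule) the resulting matrix still has stray $-1$'s that do not all align with $\Delta(1)=1\otimes X+X\otimes 1$. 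One can fix this particular edge by an additional $-1$ at the theta-web vertex, but that sign is \emph{not} the one your recipe produces.

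The deeper point is that you have correctly identified the real difficulty — ``bookkeeping the signs coherently across all edges simultaneously \ldots so that the conjugated edge maps land exactly on the standard Khovanov edge maps and not merely up to an edge-dependent sign'' — but you do not actually discharge it. Appealing to the compatibility of the canonical flow (Lemma~\ref{lem:link2flow}) and the uniqueness clause in Lemma~\ref{lem:existregion} is not enough: those results guarantee that the bases and their sign-coherent edge maps are well-defined, not that the edge-by-edge signs can be simultaneously gauged away by a diagonal change of basis. Establishing the existence of such a gauge is precisely the nontrivial content that the paper delegates to BHPW. A correct direct argument would need to exhibit the actual diagonal sign $\delta_v$ at each vertex (or each generator) and verify $\delta_u\,M_{vu}\,\delta_v = N_{vu}$ across all edges, including checking consistency around every two-dimensional face of the cube.
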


The proof follows from \cite[Proposition 4.3 and Theorem 4.6]{2019arXiv190312194B}. A concrete description of $\Phi$ is also given by the results of our Section \ref{sec: Bases for web spaces}.

\begin{proposition} \label{prop: Bl and Khov spectrum}
Let $L$ be an oriented link diagram. Then ${\mathcal X}_{\rm or}(L)$ is stably homotopy equivalent to ${\mathcal X}(L)$.
\end{proposition}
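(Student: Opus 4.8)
The plan is to compare the two cube-of-resolutions diagrams vertex-by-vertex and edge-by-edge, using Lemma~\ref{lem: Bl and Khov} as the bridge on the level of chain complexes, and then invoke the uniqueness part of Lemma~\ref{lem:functor} to promote this comparison to an equivalence of the associated Burnside functors (and hence of spectra). Concretely, recall that ${\mathcal X}_{\rm or}(L)$ is the spatial realization of the signed Burnside functor $F_{\rm or}\colon \underline{2}^n\to\mathscr{B}_\sigma$ built in Section~\ref{signed Burnside sec}, while ${\mathcal X}(L)$ is the realization of the Khovanov Burnside functor $F_{Kh}\colon\underline{2}^n\to\mathscr{B}$ of \cite{MR4153651}, which can equally be viewed as landing in $\mathscr{B}_\sigma$ with all signs equal to $+1$. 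On vertices both functors assign a set in bijection with a basis of the corresponding web (resp. Khovanov) chain group, and Lemma~\ref{lem: Bl and Khov} provides, on each such chain group, a bijection of bases (the diagonal $\pm1$ matrix $\Phi$ is in particular a signed bijection). So first I would record this vertexwise bijection $\Phi_u\colon F_{\rm or}(u)\xrightarrow{\sim} F_{Kh}(u)$, together with the sign function coming from the diagonal entries of $\Phi$.

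Next I would check that these vertexwise bijections intertwine the edge correspondences up to the recorded signs. On each edge $u\geq_1 v$, the correspondence $F_{\rm or}(\varphi_{u,v})$ is read off from the coefficients $\epsilon_{x,y}\in\{0,\pm1\}$ of the Blanchet edge differential (Proposition~\ref{prop:zipunzip}), and $F_{Kh}(\varphi_{u,v})$ from the coefficients $\{0,1\}$ of the Khovanov edge differential. Since $\Phi$ is a chain map (Lemma~\ref{lem: Bl and Khov}), the two differentials agree after conjugating by the diagonal matrices $\Phi_u,\Phi_v$; hence the underlying sets of the two correspondences are matched by $\Phi_u\times\Phi_v$, and the discrepancy is exactly the product of diagonal signs of $\Phi_u$ and $\Phi_v$ on the relevant basis elements. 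This is precisely the data of a natural isomorphism of $\mathscr{B}_\sigma$-valued functors after adjusting the Khovanov functor by the coboundary of the $0$-cochain of signs $\{\mathrm{sign}(\Phi_u)\}_u$; such a sign-twist does not change the stable homotopy type, since twisting a signed Burnside functor by a coboundary of vertex signs amounts to relabelling basis elements and produces a canonically homotopy equivalent realization (this is the same mechanism used for $\phi$ in Proposition~\ref{Reid prop} and in \cite[Section 5.3]{MR4078823}).

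Having matched vertices and edges, the remaining ingredient is the $2$-morphisms $F_{u,v,v',w}$ on square faces. Away from ladybug configurations these are unique, so $\Phi$ automatically intertwines them. On a ladybug face, the choice of $2$-morphism (right pair, say) depends only on the combinatorics of the $1$-labelled curves $c(W)$, which is identical to the Khovanov ladybug combinatorics by the discussion in Section~\ref{ladybug sec}; making the same (right pair) choice on both sides, $\Phi$ intertwines the ladybug $2$-morphisms as well. By the uniqueness clause of Lemma~\ref{lem:functor}, $F_{\rm or}$ and the sign-twisted $F_{Kh}$ are therefore naturally isomorphic as strictly unitary lax $2$-functors $\underline{2}^n\to\mathscr{B}_\sigma$. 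Passing to spatial realizations (using that the little-boxes refinement and realization of \cite{MR4078823} send naturally isomorphic functors to stably homotopy equivalent spectra, and that the sign-twist by a coboundary is absorbed by a homotopy equivalence), we conclude ${\mathcal X}_{\rm or}(L)\simeq {\mathcal X}(L)$.

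The step I expect to be the main obstacle is the bookkeeping of signs: one must verify that the sign discrepancy between $F_{\rm or}(\varphi_{u,v})$ and $F_{Kh}(\varphi_{u,v})$, conjugated by $\Phi$, is genuinely a coboundary $\delta$ of the vertex-sign $0$-cochain (so that it can be absorbed), rather than a nontrivial sign cocycle; this is where the fact that $\Phi$ is an honest chain map — not merely a basiswise bijection — is essential, and where the diagonal (rather than merely monomial) form of $\Phi$ in Lemma~\ref{lem: Bl and Khov} does the work. The analogous point in the odd Khovanov setting is handled in \cite{MR4078823}, and here it is if anything easier because, as noted in Remark~\ref{coefficients cancellations}, the square faces of the Blanchet cube commute on the nose with no sign correction, so there is no additional $2$-cocycle obstruction to reconcile.
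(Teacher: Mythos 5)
Your proof takes essentially the same route as the paper: both construct a natural isomorphism of signed Burnside functors $F_{\mathrm{or}}\to F$ from the signed bijection of bases supplied by Lemma~\ref{lem: Bl and Khov}, check compatibility with edge correspondences (because $\Phi$ is a chain map) and with the $2$-morphisms (unique off ladybugs, matching on ladybugs since the ladybug combinatorics is governed by $c(W)$), and then invoke the fact that a natural isomorphism of Burnside functors induces a stable homotopy equivalence of realizations. The only stylistic difference is your detour through ``twisting $F_{Kh}$ by a coboundary of vertex signs'': since $\Phi_u$ is a diagonal matrix with $\pm 1$ entries rather than a single sign per vertex, the ``$0$-cochain'' is really a sign function on each $F_{\mathrm{or}}(u)$, and the right framing is simply that these sign functions are the data of a signed bijection (an isomorphism in $\mathscr{B}_\sigma$); the chain-map property of $\Phi$ then directly yields the naturality squares, with no residual cocycle obstruction to worry about -- which is the form the argument takes in the paper, where the verification is delegated to the references.
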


\begin{proof}
The statement follows from the claim that the Burnside functors 
\[F, F_{\rm or}\colon {\underline{2}}^n\longrightarrow \mathscr{B}_{\sigma} \] are naturally isomorphic, where $F$ (thought of as a functor ${\underline{2}}^n\longrightarrow \mathscr{B}_{\sigma} $ where all signs are $+1$) was constructed in \cite{MR4153651} and
$F_{\rm or}$ is the result of Section \ref{signed Burnside sec}. Here a {\em natural isomorphism} is a natural transformation $F_{\rm or}\longrightarrow F$ such that for each vertex $u$ the morphism 
$F_{\rm or}(u)\longrightarrow F(u)$ is an isomorphism in $\mathscr{B}_{\sigma}$. In the case at hand the natural isomorphism $\eta$ arises from the bijection between the basis in Definition \ref{basis def} and the canonical Khovanov basis; denote this bijection $\psi$. More precisely, 
$\eta \colon {\underline{2}}^1\times {\underline{2}}^n \longrightarrow \mathscr{B}_{\sigma}$ is defined 
on objects by $\eta(0,u) = F(u), \eta(1,u)=F_{\rm or}(u)$ for $u\in \{0,1\}^n$. Define $\eta$ on the edges $e_u : (1,u) \to (0,u)$ to be the signed correspondence 
\[
\eta(e_u) = \left( F_{\rm or}(u) \xleftarrow{\rm id} F_{\rm or}(u) \xrightarrow{\psi_u} F(u) \right),
\]
with the sign given by Lemma \ref{lem: Bl and Khov}. The fact that this indeed gives rise to a natural isomorphism can be seen for example by following the detailed discussion in \cite[Section 3.6]{2001.00077}.
An isomorphism of Burnside functors induces a stable homotopy equivalence as stated in the proposition, cf. \cite[Lemma 4.17]{MR4078823}
\end{proof}

\renewcommand*{\bibfont}{\small}
\setlength{\bibitemsep}{0pt}
\raggedright
\printbibliography

\end{document}